\newtheorem{definition}{Definition}[section]
\newtheorem{theorem}{Theorem}[section]
\newtheorem{lemma}{Lemma}[section]
\newtheorem{corollary}{Corollary}[section]
\newtheorem{remark}{Remark}[section]
\newcommand{\review}[1]{\textcolor{black}{#1}}
\newcommand{\revmark}[1]{\textcolor{black}{#1}}
\newcommand{\correction}[1]{\textcolor{black}{#1}}
\title{Numerical Artifacts in Learning Dynamical Systems}
\author{
  Bing‑Ze Lu \\
  National Chung Cheng University, Taiwan
  \AND
  Richard Tsai \\
  The University of Texas at Austin, USA
}
\date{}
\begin{document}

\maketitle
\begin{abstract}
In many applications, one needs to learn a dynamical system from its solutions sampled at a finite number of time points. The learning problem is often formulated as an optimization problem over a chosen function class. However, in the optimization procedure, \review{prediction data from generic dynamics requires a numerical integrator to assess the mismatch with the observed data.} This paper reveals potentially serious effects of a chosen numerical scheme on the learning outcome. Specifically, the analysis demonstrates that a damped oscillatory system may be incorrectly identified as having "anti-damping" and exhibiting a reversed oscillation direction, even though it adequately fits the given data points. 
\review{This paper shows that the stability region of the selected integrator will distort the nature of the learned dynamics.}
\revmark{Crucially, reducing the step size or raising the order of an explicit integrator does not, in general, remedy this artifact, because higher-order explicit methods have stability regions that extend further into the right half complex plane.} Furthermore, it is shown that the implicit midpoint method can preserve either conservative or dissipative properties from discrete data — offering a principled integrator choice even when the only prior knowledge is that the system is autonomous.
\end{abstract}
\keywords{Dynamical system learning \and numerical integrator \and numerical stability}

\maketitle
\newpage
\section{Introduction}

In numerous applications, such as robotics \cite{Djeumou22}\cite{Djeumou23}\cite{Neary23}\cite{Hersch08}, autonomous driving \cite{Levinson11}\cite{Lucas20}, and pharmacokinetics and medical applications \cite{Laurie23}\cite{Lu21},
it is necessary to extract some understanding of a complex evolutionary process from time-snapshots of the states. We will refer to this kind of task as dynamical system learning. 

Consider autonomous dynamical systems
\begin{eqnarray}\label{eq:true}
    \dfrac{dy}{dt} = f(y)\in \mathbb{R}^d,
\end{eqnarray}
where $f$ is a  {Lipschitz function}. 
Our main goal is to study the potentially non-negligible effects in identifying $f$ from observations of $y(t)$ when a numerical integrator. 

Denote $\phi_t[f]y_0$ as the flow of \eqref{eq:true} that propagates the initial condition $y_0$ to time $t$. Thus, the solution of the differential equation at time $t$, starting with initial condition $y_0$, is denoted as $y(t; y_0)\equiv \phi_t[f]y_0$.

We assume that observations of the trajectories are sampled with a uniform frequency $1/H$, forming the dataset $\mathcal{D}_N = \left\{ (t_n, y(t_n)) \right\}_{n=1}^N$, where $t_n\equiv nH$. 

We formulate the learning problem as the following least squares fitting problem:
\begin{eqnarray}\label{eq:LS-flow}
    \min_{\hat {f}\in \mathcal{F}} \dfrac{1}{NM}\sum_{y_0\in U_M}\sum_{n = 1}^N |\phi_{t_n}[\hat {f}] y_0 - y(t_n\review{; y_0)}|^2,
\end{eqnarray}
where $\mathcal{F}$ is a functional class in which we look for $f$, and $U_M$ contains $M$ samples of initial conditons.

In practice, one typically approximates $ \phi_t$ by using a numerical integrator with a step size $h$. We denote the propagator of the integrator by $S_h[\hat {f}]$. 
 This practice leads to the least squares fitting problem that we shall focus on in this paper:
\begin{eqnarray}\label{eq:LS-Scheme}
    \min_{\hat {f}\in \mathcal{F}} \dfrac{1}{NM}\sum_{y_0\in U_M}\sum_{n = 1}^N |S_h^{mn}[\hat {f}] y_0 - \review{y(t_n; y_0)} |^2,
\end{eqnarray}
where $mh=H.$ In this paper, we also consider linear multistep methods, for which the associated $S_h[\hat f]$ necessarily depends more than just a single initial state.

{Denote the global minimizer of \eqref{eq:LS-Scheme} by $g$. Strictly speaking, one has at hand a discrete-time dynamical system defined by the involved numerical scheme and $g$:
\begin{equation}\label{eq:discrete-time-dynamical-system}
    y_{n+1}:= S_h[g]y_n.
\end{equation}
We distinguish two driving motivations for solving \eqref{eq:LS-Scheme}:
\begin{enumerate}
    \item Use $S_h[g]$ to generate simulated data, extract
    information, and make inferences about the originally observed dynamical system \eqref{eq:true};
    \item Analyze and simulate  the continuous-time dynamical system
    \begin{eqnarray}\label{eq:approx}
    \dfrac{dy}{dt} =  {g}(y);  %S_h, \mathcal{D}_N), 
    \end{eqnarray}
    and make inferences about the originally observed dynamical system \eqref{eq:true}. 
    %(For brevity of notation, we shall drop $S_h$ and $\mathcal{D}_N$ from $g$.)
\end{enumerate}
}

We are interested in understanding how to address the following questions:
\begin{itemize}
\item[Q1] \emph{What if I use a smaller step size or higher order numerical integrator in computing new simulations?}

\item[Q2] \emph{What is the "generalization" to initial data that is out of the original sampled distribution?}
\end{itemize}

\revmark{We shall see that 
%the familiar responses to Q1 — 
reducing $h$ or raising the order of an explicit integrator 
 are not, by themselves, sufficient to avoid the artifact, and can, in some cases, worsen it. This observation motivates the prescriptive recommendation developed in Section~\ref{sec:nonlinear-learning} and in the Conclusion. Q2 is addressed in Section~\ref{sec:nonlinear-learning} through a local-linearization analysis.}

By solving the optimization problem \eqref{eq:LS-Scheme} for a fixed $f$ and $y_0$, one constructs an approximation of the flow $\phi_t[f]y_0$ that is inherently constrained by the underlying numerical scheme. Consequently, comparing the discrete approximation $S_h^n[g]y_0$ with the true flow $\phi_{t_n}[f]y_0$ at times $t_n$ outside the observation window serves to assess the extrapolation performance of this scheme-dependent constraint. \emph{It is therefore critical to quantify the potential discrepancy between $g$ and $f$ arising from the choice of integrator.}

In Figure~\ref{fig:pendulum}, we compare a fully resolved trajectory of a system learned from trajectory snapshots of a damped nonlinear pendulum. The function class $\mathcal{F}$ in \eqref{eq:LS-Scheme} consists of sufficiently expressive Multi-Layer Perceptrons, and a fourth-order Runge-Kutta scheme is used to compute solutions of the candidate dynamical system during learning. While the data are sampled well above the Nyquist rate and are accurately fitted by the optimized discrete dynamical systems induced by the numerical scheme, the fully resolved trajectory reveals that the learned dynamical system is expansive rather than dissipative.

Addressing Q2 is challenging for nonlinear systems: approximating the infinite-dimensional function $f$ from limited trajectory data without constraints is inherently ill-posed, yet essential for capturing limit cycles or attractors. Section~\ref{sec:numerical-examples} investigates this through two numerical studies of nonlinear oscillators.

\revmark{A finite trajectory fit constrains only the values of the learned vector field $g$ along the sampled orbit, leaving the pointwise Jacobian $Dg(z_n)$ underdetermined; whether the resulting loss of stability is visible in the discrete data depends on the choice of integrator. Since this reduces to the stability properties of the discretization applied to local linearizations, we concentrate in Sections~\ref{sec:one-step}--3 on the linear scalar problem and develop the nonlinear consequences in Section~\ref{sec:nonlinear-learning}.}

In most parts of this paper, we consider  \eqref{eq:true} to be a diagonalizable linear system: 
\begin{equation*}
    f(y) = Ay = P\Lambda P^{-1}y,
\end{equation*}
and assume that we know the $P$. 
It suffices to study the impact of a numerical integrator in learning the scalar equation
\begin{eqnarray}
    \dfrac{dz}{dt} = \lambda  z \in \mathbb{C},~~z(t_0) = z_0,
\end{eqnarray} where $\lambda$ is an eigenvalue of $f$.

\review{Henceforth, we focus on the following problem, defined for any $h>0$:
\begin{equation}\label{eq:scalar}
    \min_{\hat{\lambda} \in \mathbb{C}} \sum_{n=1}^{N} \left\| S_{h}^{mn}[\hat{\lambda} z] z_0 - z(t_n; z_0) \right\|^2. 
\end{equation} 
}
Thus, our analysis applies to 
Spectral methods for a broad class of time-dependent partial differential equations.

\revmark{Our contributions are as follows:
\begin{itemize}
    \item For one-step methods applied to the scalar model, we show that the learned parameter $\hat\lambda h$ must lie in the closure of the integrator's stability region. This is a finite-$h$ geometric characterization of the learned dynamics, distinct from asymptotic-in-$h$ error expansions.
    \item As a direct consequence, numerical integrators whose stability regions overlap the right half complex plane can produce a learned $\hat\lambda$ with positive real part; in particular, higher-order explicit methods, whose stability regions extend further into the right half-plane, may in fact worsen rather than reduce this artifact. This mechanism explains Figure~\ref{fig:pendulum} and speaks directly to Q1.
    \item For linear multistep methods, $\hat\lambda h$ need not lie in the stability region, but at least one root of the associated characteristic polynomial remains in the closed unit disc. Selecting this root resolves the non-uniqueness observed in~\cite{Terakawa24}. Linear multistep methods are also more sensitive to perturbations of the initial data, owing to the excitation of spurious roots of the characteristic polynomial.
    \item We quantify how the sample size $N$ and noise level $\sigma$ propagate into the learned parameter through a perturbation analysis, obtaining $\hat\lambda_N(\sigma)$, and show that aggregating multiple trajectories suppresses the effect of noise.
\end{itemize}
Taken together, these results lead to a prescriptive recommendation: when the only prior knowledge about the underlying dynamics is that it is autonomous, we advocate the implicit midpoint rule as a principled default, since its stability region coincides with the left half complex plane, it preserves conservative modes exactly, and it extends naturally to the dissipative regime.}
\begin{figure}
    \centering
        \includegraphics[width=\linewidth]{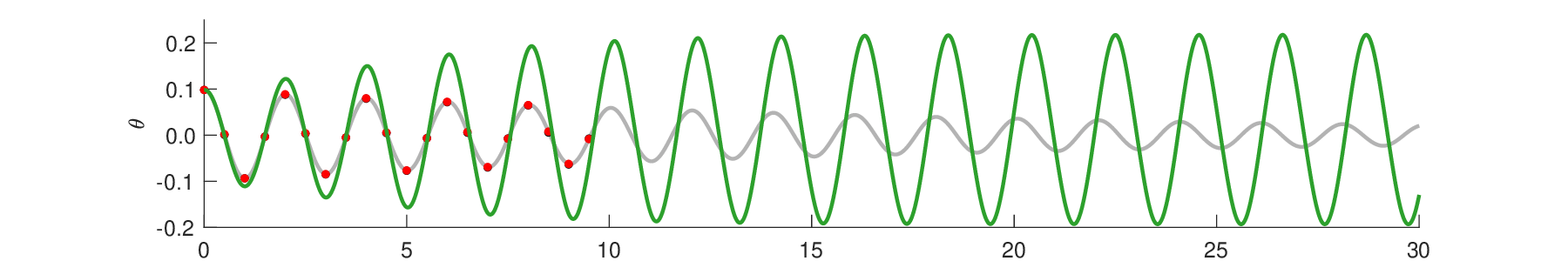}
    \caption{The gray curve shows the angle profile of the underlying damped pendulum dynamics. The red dots along the curve represent the sampled data, while the black dots depict the fitted data obtained using the fourth-order Runge–Kutta method with a time step size of $h$. The green line shows the high-resolution solution for the learned dynamics' trajectory. \revmark{RK4 is chosen here deliberately: its stability region extends into the right half complex plane, which is precisely what permits the learned dynamics to acquire a positive real part and appear expansive even though the sampled data are fitted well. This mechanism is analyzed in Section~\ref{sec:one-step}.}}
    \label{fig:pendulum}
\end{figure}

\subsection*{Related works}
\textbf{Relating numerical schemes to learning.}
 The Inverse Modified Differential Equations (IMDE) method, introduced in \cite{Zhu22}, which builds upon the modified differential equation framework \cite{Calvo94}, provides a robust tool for evaluating learned dynamics by expanding ${  {g_\theta}}$ as a power series in the step size $h$. This method characterizes a differential equation whose numerical solution matches the sampled data at each sampling time. Additionally, in \cite{Zhu24}, a comprehensive analysis using the IMDE method is presented to study linear multistep methods in dynamic learning problems. \review{A comprehensive study of $\|g_\theta - f\|$ can be found in \cite[Theorem 3.2]{Zhu22} for one-step methods and \cite[Theorem 4]{Zhu22}}.  \revmark{The IMDE framework characterizes the learned dynamics through a power series in~$h$. Our analysis is complementary: it relates the learned parameter $\hat\lambda h$ to the finite-$h$ geometry of the integrator's stability region on the complex plane, which makes qualitative artifacts, such as a change in the sign of $\operatorname{Re}\hat\lambda$, directly visible without recourse to an asymptotic expansion.}

In \cite{Terakawa24}, the authors connect learning outcomes to the stability region of Runge-Kutta methods and point out a non-uniqueness phenomenon in learning the scalar equation; \revmark{Section~3 resolves this non-uniqueness through the selection of an appropriate root of the characteristic polynomial.}

In \cite{Keller21, Du22}, the authors study the stability of the discrete dynamical systems defined by linear multistep methods fitting observations of a dynamical system's trajectory and right-hand side, and prove that the learned discrete process $S_h[g]$ defined via Adams-Moulton methods, or higher-order Adams-Bashforth methods, is unstable.

\textbf{Koopman theory and learning dynamical systems.}
Koopman theory~\cite{Koopman31} replaces the nonlinear flow of \eqref{eq:true} by the action of the linear Koopman operator $\mathcal{K}_t g(y):=g(\phi_t(y))$ on observables $g\colon\mathbb{R}^d\to\mathbb{C}$, and extensions to dissipative and driven systems appear in~\cite{mezic04,mezic05}. In practice, $\mathcal{K}_t$ is approximated by projection onto a finite basis of observables, for instance via Extended Dynamic Mode Decomposition (EDMD), whose spectrum is used to infer the continuous-time eigenvalues $\lambda_j$. \revmark{Since the inferred $\lambda_j$ are, ultimately, eigenvalues of a discretized operator, the integrator-induced artifacts identified in this paper propagate directly into Koopman-based learning pipelines.}

\revmark{\textbf{SINDy.}
The Sparse Identification of Nonlinear Dynamics algorithm \cite{Brunton16} fits $g$ by regressing a dictionary of candidate functions against the observed time derivatives $\dot y(t_n)$. In practice, $\dot y(t_n)$ must be estimated from trajectory samples, and numerical differentiation of noisy data introduces its own artifacts — related to, but distinct from, the integrator-stability artifacts analyzed in this paper. Weak-form and trajectory-matching variants of SINDy~\cite{Messenger21,Reinbold20} replace the pointwise derivative regression with a loss that involves numerical integration of the candidate vector field; for those variants, the integrator choice enters the optimization problem directly, and the analysis of the present paper becomes immediately applicable.}

\textbf{Neural ODEs.}
A range of deep-learning approaches construct approximations of $f$ from observations, using either one-step integrators~\cite{Chen18} or linear multistep methods~\cite{Xie19,Raissi18} in the forward solve; see~\cite{Mardt18,Vlachas18,Wehmeyer18,Yeung19,Bertalan19,Kolter19,Qin19,Hu22} for related approaches and surveys. Structure-preserving variants, such as Hamiltonian neural networks~\cite{Greydanus19} and symplectic-integrator-based refinements~\cite{Chen19}, assume prior knowledge of conservative structure, as do implicit-method-based approaches~\cite{Rico93,Oussar01}. The present paper is complementary: it quantifies how the choice of integrator shapes the learned dynamics even in the absence of such structural priors.

\revmark{A summary of the notation used throughout the paper is provided in Table\ref{tab:notation-dyn} ~\ref{tab:notation-data}~\ref{tab:notation-integrators}~\ref{tab:notation-stability} in the Appendix.}

\section{Linear systems learning with one-step methods}\label{sec:one-step}
This section explores the learned dynamics inferred by data using one-step methods. We will qualitatively analyze the behavior of the learned dynamics and connect the results to the theory of absolute stability.

\review{This section proceeds in three steps.
We first use Lemma~\ref{thm:main} to show that the learned dynamics, under the given sampling rate, must remain within the stability region of the selected numerical method.
We then leverage Lemma~\ref{thm:main} to analyze the structure of the learned eigenvalue $\hat{\lambda}$; specifically, $\operatorname{Re}(\hat{\lambda})$ governs the amplitude evolution while $\operatorname{Im}(\hat{\lambda})$ determines the direction of rotation.
Finally, we show that when the data are noisy, incorporating additional trajectories can help mitigate the effect of noise.}
% We first analyze the learning outcomes using noise-free observed data and then extend the discussion to noisy data.

One-step methods for scalar linear equations with constant step size $h$ evolve
the numerical solution by multiplying by the stability function $p(\xi)$,
where $\xi = \lambda h$. The numerical solution after $n$ steps is given by \(z_n = p(\xi)^n z_0\).
In Table~\ref{tab:characteristic-poly}, we show the stability functions
for a few schemes \review{~\cite[Ch.~7]{LeVeque07}} that we will analyze in this paper. \revmark{For the explicit Runge--Kutta family, $p$ is a polynomial; for the implicit Euler and implicit midpoint entries, $p$ is a rational function, and we retain the symbol $p$ in both cases for notational uniformity.}

\begin{table}[h!]
\caption{\revmark{Stability functions $p(\xi)$ for representative one-step schemes: a polynomial for the explicit Runge--Kutta family and a rational function for the implicit schemes shown here. The common symbol $p$ is retained throughout for consistency with later sections.}
%The Explicit Euler is the Explicit Runge-Kutta method with $k=1$.
}
\label{tab:characteristic-poly}
\medskip
\centering
\renewcommand{\arraystretch}{1.8}
\begin{tabularx}{\textwidth}{
>{\centering\arraybackslash}X
>{\centering\arraybackslash}X
>{\centering\arraybackslash}X
}
\toprule
\textbf{Explicit Runge--Kutta ($k=1,2,3,4$)} & \textbf{Implicit Euler} & \textbf{Implicit Midpoint} \\
\midrule
$p(\xi) = \displaystyle\sum_{j=0}^{k} \frac{\xi^j}{j!}$ &
$p(\xi) = \dfrac{1}{1 - \xi}$ &
$p(\xi) = \dfrac{1 + \tfrac{1}{2}\xi}{1 - \tfrac{1}{2}\xi}$ \\
\bottomrule
\end{tabularx}
\end{table}

In the following discussion, we use the notation $Z_n$ to represent the observed data. In particular, $Z_n =e^{\lambda (nh)} Z_0$ is the exact solution of \eqref{eq:scalar} with initial data $Z_0$ at time $t = nh$. Meanwhile, we use $z_n$ to represent the numerical solution at the $n$th iteration step with step size $h$, starting from $z_0 = Z_0$.

In this section, we analyze the global minimizer of least squares problems defined as follows:
\begin{eqnarray}\label{eq:learning-one-step}
    \min_{\xi\in \mathbb{C}} \dfrac{1}{N}\sum_{k = 1}^N |z_{mk}-Z_0e^{\lambda mh k}|^2~~
    \operatorname{\review{where}}~~z_{mk} = p(\xi)^{mk} z_0,
\end{eqnarray}
where $p(\xi)$ is the stability function of a one-step integrator as shown in Table~\ref{tab:characteristic-poly} and $mh$ is the sampling step size with $m$ being a positve integer. \review{In addition, the global minimizer $\hat{\xi} = \hat{\lambda} h$ then serves as the learned dynamic.}

The following Lemma characterizes the global minimizers.
\begin{figure}[!ht]
    \centering
    \begin{subfigure}{0.24\linewidth}
        \includegraphics[width=\linewidth]{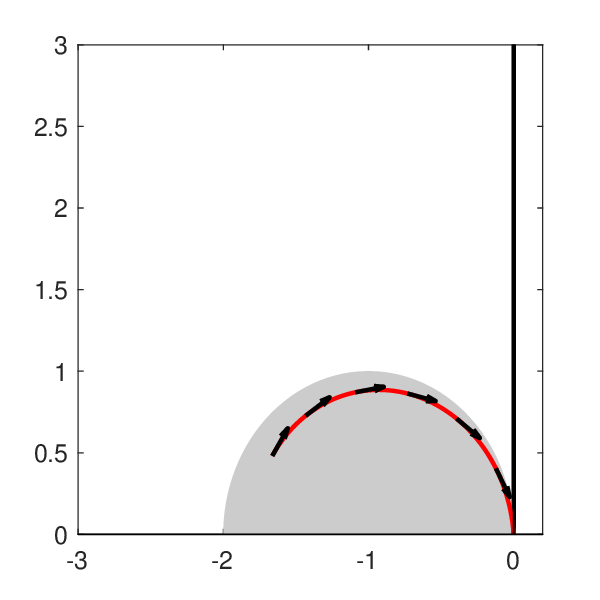}
    \caption{Explicit Euler}
    \label{fig:FE_obj}
    \end{subfigure}
    \begin{subfigure}{0.24\linewidth}
        \includegraphics[width=\linewidth]{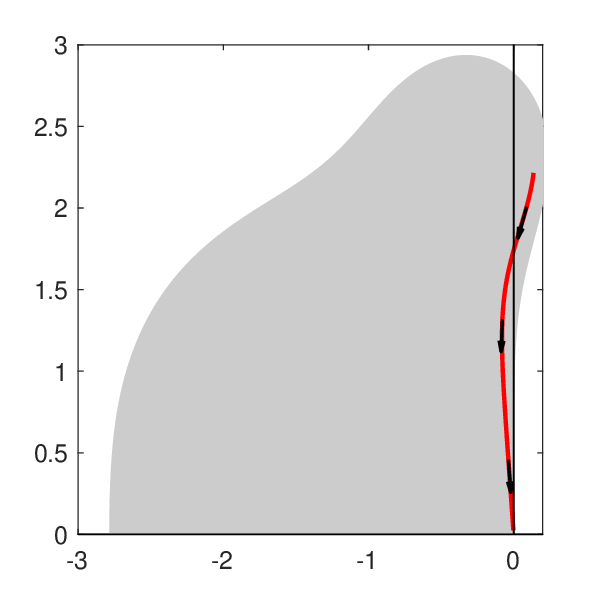}
    \caption{RK4}
    \label{fig:RK4_obj}
    \end{subfigure}
    \begin{subfigure}{0.24\linewidth}
        \includegraphics[width=\linewidth]{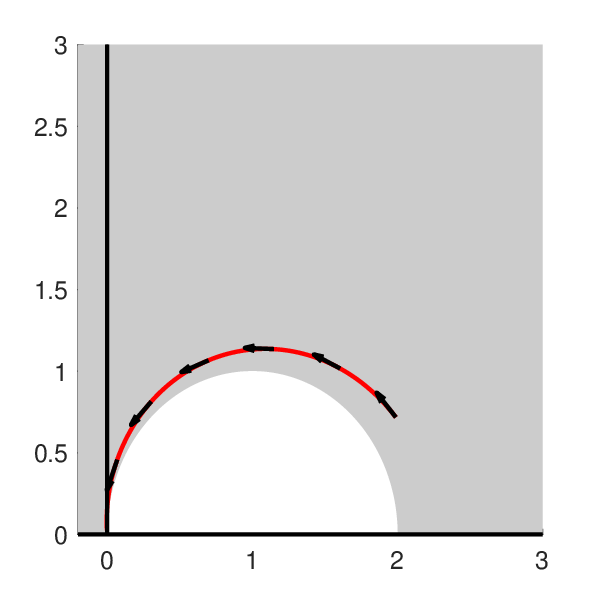}
    \caption{Implicit Euler}
    \label{fig:BE_obj}
    \end{subfigure}
    \begin{subfigure}{0.24\linewidth}
        \includegraphics[width=\linewidth]{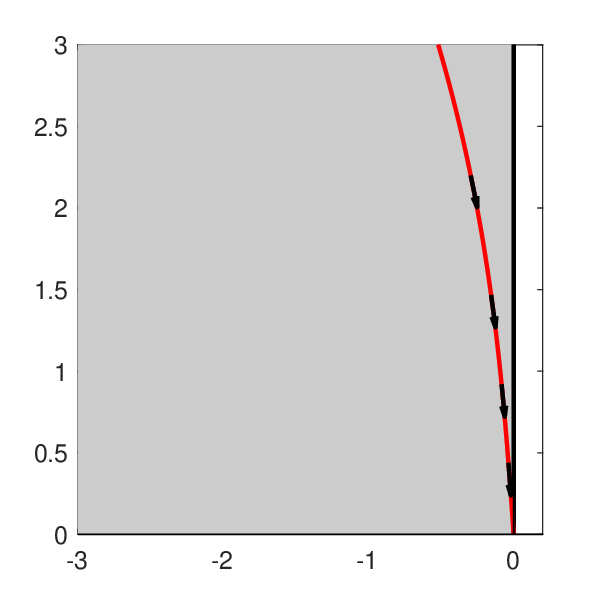}
    \caption{Impl. Midpoint}
    \label{fig:ITr_obj}
    \end{subfigure}
    \caption{Profiles of the learned quantity $\hat{\lambda}h$ using the Explicit Euler, Implicit Euler, RK4, and implicit midpoint rules as $h$ varying from 0 to 0.2. The true $\lambda = -1 + 4\pi i$ is set. Red curves show how $\hat{\lambda}h$ moves within each method’s stability region; arrows point toward the limit as $h \to 0$. }\label{fig:one-step-convergence-diagram}
\end{figure}

\begin{figure}[!ht]
    \centering
    \begin{subfigure}{0.24\linewidth}
        \includegraphics[width=\linewidth]{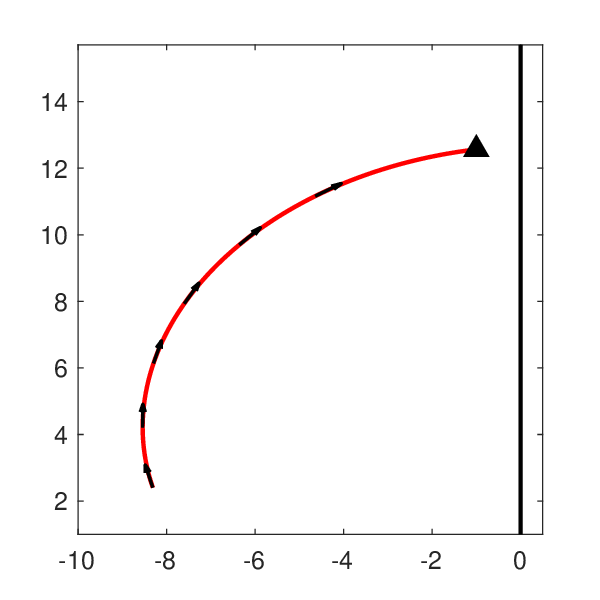}
    \caption{Explicit Euler}
    % \label{fig:FE_obj}
    \end{subfigure}
    \begin{subfigure}{0.24\linewidth}
        \includegraphics[width=\linewidth]{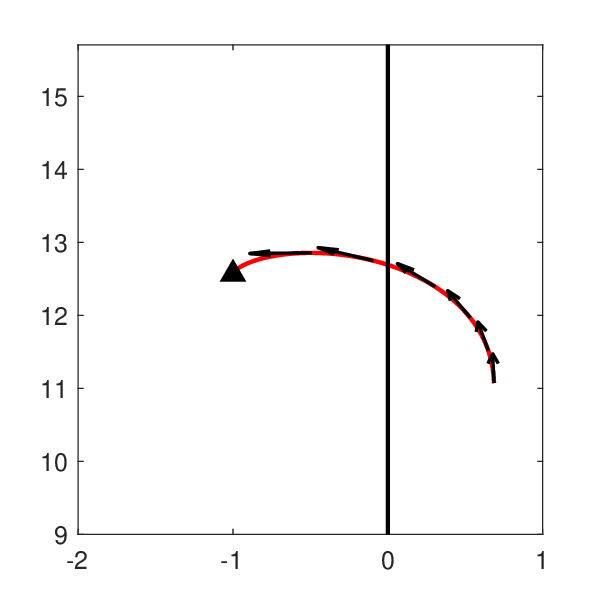}
    \caption{RK4}
    % \label{fig:RK4_obj}
    \end{subfigure}
    \begin{subfigure}{0.24\linewidth}
        \includegraphics[width=\linewidth]{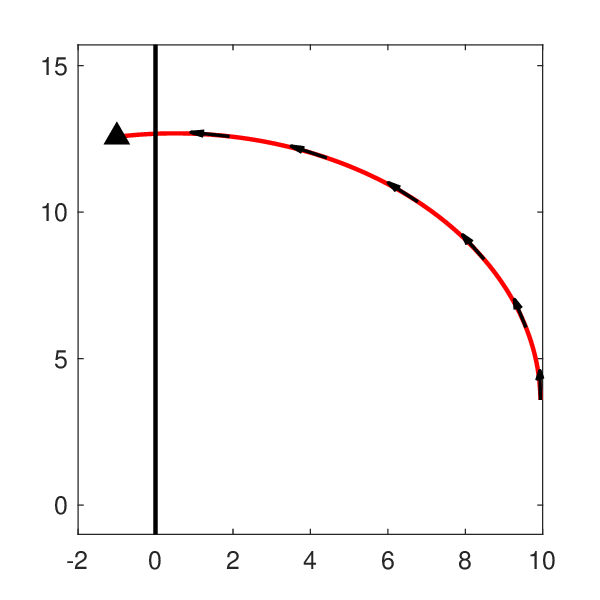}
    \caption{Implicit Euler}
    % % \label{fig:BE_obj}
    \end{subfigure}
    \begin{subfigure}{0.24\linewidth}
        \includegraphics[width=\linewidth]{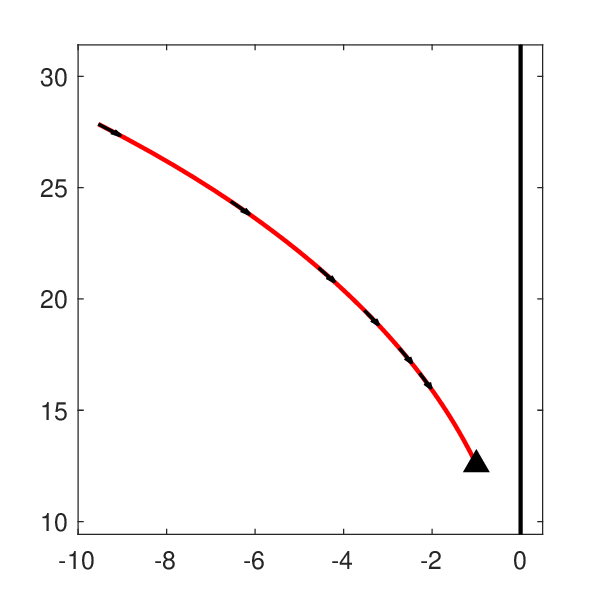}
    \caption{Impl. Midpoint}
    % \label{fig:ITr_obj}
    \end{subfigure}
    \caption{\review{Convergence of the learned dynamic $\hat{\lambda}(h) $ 
    to the ground truth $\lambda = -1 + 4\pi i$ (solid triangle) as the step 
    size $h$ decreases from $0.2$ to $0$ for four time-integration schemes: 
    (a) Explicit Euler, (b) RK4, (c) Implicit Euler, and (d) implicit
    midpoint. The red curve traces the trajectory of $\hat{\lambda}(h)$
    in the complex plane, and the arrows indicate the direction of 
    decreasing $h$.}}
\end{figure}

\begin{lemma}\label{thm:main}
Let $\hat\lambda$ be the minimizer of \eqref{eq:learning-one-step}.
For any positive $h$, there exists $\hat\lambda h\in\mathbb{C}$ satisfying 
    \begin{eqnarray}\label{eq:one-step-condition}
        p({\hat\lambda}h) = e^{\lambda h}. 
    \end{eqnarray}
    Furthermore, $\xi^* = \hat\lambda h$ is the global minimizer to \eqref{eq:learning-one-step} \review{with which the objective function reaches its global minimum 0.}  
\end{lemma}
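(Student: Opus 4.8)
The plan is to show that the global minimum of the objective in \eqref{eq:learning-one-step} equals exactly $0$ and that it is attained precisely when the one-step amplification factor $p(\xi)$ matches the exact factor $e^{\lambda h}$ over a single micro-step. First I would note that, because $z_0 = Z_0$ and each summand is a squared modulus, the objective is nonnegative, so $0$ is a lower bound; it therefore suffices to exhibit a single $\xi^\ast$ that drives every summand to zero.

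Next I would reduce the vanishing condition to one algebraic equation. Assuming $Z_0 \neq 0$ (the case $Z_0 = 0$ being trivial), the $k$-th summand vanishes iff $p(\xi)^{mk} = e^{\lambda m h k}$ for $k = 1,\dots,N$. The crucial observation is that the \emph{single} equation $p(\xi) = e^{\lambda h}$ already forces all of these simultaneously, since then $p(\xi)^{mk} = (e^{\lambda h})^{mk} = e^{\lambda m h k}$. Thus the entire least-squares problem collapses to solving $p(\xi^\ast) = e^{\lambda h}$: any such $\xi^\ast$ makes every summand zero, hence attains the lower bound and is a global minimizer, and setting $\hat\lambda = \xi^\ast/h$ yields exactly \eqref{eq:one-step-condition}.

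The remaining work, and the only part requiring a case split, is establishing solvability of $p(\xi) = e^{\lambda h}$ for each $p$ in \eqref{eq:FE}--\eqref{eq:TR}. For the explicit schemes \eqref{eq:FE} and \eqref{eq:RK}, $p$ is a nonconstant polynomial (of degree $1$, respectively $k$), so the Fundamental Theorem of Algebra supplies a root of $p(\xi) - e^{\lambda h}$ — and, for RK of order $k$, several roots, which is the source of the non-uniqueness visible in Figure~\ref{fig:obj_RK2_decay}. For the implicit rational schemes \eqref{eq:BE} and \eqref{eq:TR}, I would clear denominators: writing $p = P/Q$, the equation becomes the polynomial equation $P(\xi) - e^{\lambda h} Q(\xi) = 0$. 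For Backward Euler this is $(1 - e^{\lambda h}) + e^{\lambda h}\,\xi = 0$, solvable since $e^{\lambda h}\neq 0$; for the trapezoidal rule it is $(1 - e^{\lambda h}) + \tfrac{1}{2}(1 + e^{\lambda h})\,\xi = 0$, solvable whenever $e^{\lambda h} \neq -1$.

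The main obstacle I anticipate is not existence per se — each case is a one-line computation — but stating the result cleanly and flagging the genuinely degenerate configurations. The cleanest route is the uniform one: for any stability function $p = P/Q$ with $P,Q$ not both constant, $P - wQ$ is a nonconstant polynomial for all but finitely many $w \in \mathbb{C}$, so $p$ is surjective off a finite exceptional set, and one only needs to check that $w = e^{\lambda h}$ avoids it. The single borderline case is the trapezoidal rule at $e^{\lambda h} = -1$ (that is, $\lambda h \in i\pi + 2\pi i\,\mathbb{Z}$), which I would either exclude explicitly in the hypotheses or handle by a limiting argument. Everything else — the nonnegativity bound and the identity $p(\xi)^{mk} = e^{\lambda m h k}$ — is routine once the reduction in the second step is in place.
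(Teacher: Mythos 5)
Your proposal is correct and follows essentially the same route as the paper's proof: the Fundamental Theorem of Algebra for the explicit schemes, clearing denominators to get a linear equation for the implicit ones, and observing that the nonnegative objective attains its lower bound $0$ at any $\xi^\ast$ with $p(\xi^\ast)=e^{\lambda h}$ (your explicit remark that this single equation forces every summand $|p(\xi)^{mk}z_0 - Z_0 e^{\lambda m h k}|^2$ to vanish is left implicit in the paper). You are in fact slightly more careful than the paper on one point: the paper justifies solvability of both implicit cases with ``since $e^{\lambda h}\neq 0$,'' which suffices only for Backward Euler, whereas your observation that the trapezoidal-rule equation degenerates exactly when $e^{\lambda h}=-1$ (i.e., $\lambda h\in i\pi+2\pi i\,\mathbb{Z}$, a configuration not excluded by the lemma's hypotheses, only later by the Nyquist condition) identifies a genuine edge case the paper glosses over.
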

\begin{proof}
    (i) Explicit Runge-Kutta methods, including Explicit Euler:\\
    By the Fundamental Theorem of Algebra, for any given $\lambda h\in\mathbb{C},$ 
    \begin{eqnarray*}
        p(\xi) - e^{\lambda h}=0
    \end{eqnarray*} has a solution on the complex plane. 
    
    (ii) Now consider the Implicit Euler and the Implicit Midpoint Rule:
    \begin{itemize}
        \item $(1-\xi)e^{\lambda h} = 1$ ~~~[Implicit Euler],
        \item $(1-\dfrac{1}{2}\xi)e^{\lambda h}-(1+\dfrac{1}{2}\xi) = 0$ ~~~[Implicit Midpoint Rule].
    \end{itemize}
    Since $e^{\lambda h} \neq 0$, it is clear that each equation has a unique solution. 
    Denote the solution as $\xi^*=\hat\lambda h$.
    Clearly, for $z_0 = Z_0$,
    \begin{eqnarray*}
        \dfrac{1}{N}\sum_{k = 1}^N |p(\xi)^{mk} z_0-Z_0e^{\lambda mh k}|^2 = 0
    \end{eqnarray*}
    is a global minimum. \\
\end{proof}

 Figure~\ref{fig:one-step-convergence-diagram} compares the paths of optimal $\hat\lambda h$ as $h\rightarrow 0^+$ for different one-step methods and their stability regions.
 In the following subsections, we will provide more explicit characterizations of the signs of $\operatorname{Re}(\hat\lambda)$ and  $\operatorname{Im}(\hat\lambda)$ relative to $\lambda h$.

\subsection{Consistency in recovering the sign of $\lambda$}
Based on Lemma~\ref{thm:main}, we may relate the global minimizer of the least squares, the step size used in the numerical integrator, to the integrator's stability region. 

The following theorem clarifies the errors in the amplitude and the rotation direction of the identified dynamical system using one-step methods.
\begin{theorem}\label{thm:euler_stab}
    Assume that $\operatorname{Re}(\lambda)\leq 0$ in \eqref{eq:scalar}.
    Then the global minimum of \eqref{eq:learning-one-step}, with $S_h^n$ being one-step methods, is attained at $\hat\lambda$ satisfying
    \begin{eqnarray*}
        |p(\hat\lambda h)| = |e^{\lambda h}| \leq 1.
    \end{eqnarray*}
    The equality holds when the $\hat\lambda h$ resides on the boundary of the selected numerical method's stability region.
\end{theorem}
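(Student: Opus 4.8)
The plan is to reduce the statement to Lemma~\ref{thm:main} and then convert the resulting algebraic identity into a statement about stability regions. First I would invoke Lemma~\ref{thm:main}, which guarantees that for any positive $h$ the global minimizer $\hat\lambda h$ of \eqref{eq:learning-one-step} exists, attains the objective value $0$, and satisfies the identity $p(\hat\lambda h) = e^{\lambda h}$. This already eliminates the optimization aspect of the problem, so all that remains is to read off the modulus of both sides and interpret it geometrically.

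The second step is a one-line magnitude computation. The hypothesis $\operatorname{Re}(\lambda) = 0$ means $\lambda = i\omega$ for some $\omega \in \mathbb{R}$, so that $|e^{\lambda h}| = |e^{i\omega h}| = 1$. Taking moduli in the identity supplied by Lemma~\ref{thm:main} then gives $|p(\hat\lambda h)| = |e^{\lambda h}| = 1$, which is the first assertion of the theorem. For the second assertion I would recall the standard characterization of the region of absolute stability of a one-step method, namely $\mathcal{S} = \{\xi \in \mathbb{C} : |p(\xi)| \le 1\}$, whose topological boundary is the level set $\partial\mathcal{S} = \{\xi : |p(\xi)| = 1\}$. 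Since we have shown $|p(\hat\lambda h)| = 1$, the minimizer $\hat\lambda h$ lands precisely on $\partial\mathcal{S}$.

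The only point requiring genuine care—and the main obstacle—is the identification of the level set $\{|p(\xi)| = 1\}$ with the topological boundary $\partial\mathcal{S}$. For the polynomial $p$ of the explicit Runge–Kutta methods in \eqref{eq:FE}--\eqref{eq:RK} and for the rational $p$ of the backward Euler and implicit trapezoidal schemes in \eqref{eq:BE} and \eqref{eq:TR}, the sublevel set $\{|p| \le 1\}$ is closed and the strict sublevel set $\{|p| < 1\}$ is its interior, away from degenerate critical points of $|p|$; hence $|p(\hat\lambda h)| = 1$ does place $\hat\lambda h$ on $\partial\mathcal{S}$ rather than in the interior. I would confirm this for each listed characteristic function, which is routine because each $|p|$ is smooth and non-constant, so that $\{|p|=1\}$ is a genuine boundary curve. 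With that verification in hand, the theorem follows immediately.
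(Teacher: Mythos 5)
Your proposal is correct and takes essentially the same approach as the paper, which treats this theorem as an immediate consequence of Lemma~\ref{thm:main}: take moduli in $p(\hat\lambda h)=e^{\lambda h}$, use $|e^{\lambda h}|=1$ when $\operatorname{Re}(\lambda)=0$, and identify the level set $\{\xi : |p(\xi)|=1\}$ with the stability-region boundary. Your explicit attention to why $\{|p(\xi)|=1\}$ coincides with $\partial\{|p(\xi)|\le 1\}$ is more careful than the paper (which leaves this implicit); just note that the clean justification is the maximum modulus principle/open mapping theorem for these analytic or rational $p$, rather than smoothness and non-constancy alone.
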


In the following, we present our analysis of a few chosen one-step methods with respect to either conservative or dissipative dynamics. The results concerning the sign of $\operatorname{Re}(\hat\lambda)$ are summarized in
Table~\ref{comp-table-one-step}.
The readers can find the detailed proofs in the Appendix.
A natural prerequisite of our results is that the step size $h$ has to be sufficiently small so that at least two data points are observed for each period of the oscillation.   
\begin{definition}[Nyquist condition for $\lambda h$]\label{def:Nyquist-cond}
    Given $z^\prime=\lambda z$ for some $\lambda\in\mathbb{C}$. 
    We say that $\lambda h$ meets the Nyquist condition~\cite{nyquist1928certain} 
    when 
    \begin{equation}
        -\pi< \operatorname{Im}(\lambda h)<\pi.
    \end{equation}
\end{definition}
As shown in Theorem~\eqref{thm:euler_stab}, the stability region of a numerical method governs the qualitative behavior of the identified dynamics. The next corollary summarizes this relationship in terms of the stability region’s geometry.

\begin{corollary}[Unified characterization of learned dynamics via stability regions]\label{cor:unified_cons}
Assume that $\operatorname{Re}(\lambda) \leq 0$, $\lambda h$ satisfies the Nyquist condition~\eqref{def:Nyquist-cond} and $h>0$.
Let $S_h^n$ denote the evolution operator associated with a single-step numerical method applied in~\eqref{eq:learning-one-step}, and let $\hat{\lambda}$ be the corresponding global minimizer.  

Then the qualitative behavior of the learned dynamics is determined by the location of the method’s stability region in the complex plane:

\noindent When $\operatorname{Re}(\lambda)  = 0$, we have
\begin{enumerate}
    \item The Explicit Euler and RK2 methods will always learn a dynamic with $\operatorname{Re}(\hat{\lambda}) < 0 $.
    
    \item For sufficient small $h$, the RK3 method can learn a dynamic with $\operatorname{Re}(\hat{\lambda}) > 0 $.
    \item The Implicit Euler will always learn a dynamic with $\operatorname{Re}(\hat{\lambda}) > 0 $.
    \item The Implicit Midpoint Rule will always learn a dynamic with $\operatorname{Re}(\hat{\lambda}) = 0 $.
\end{enumerate}
\noindent When $\operatorname{Re}(\lambda)  < 0$, we have

\begin{enumerate}
    \item The Explicit Euler and RK2 methods will always learn a dynamic with $\operatorname{Re}(\hat{\lambda}) < 0 $.
    \item For sufficient small $h$, the RK3 method can learn a dynamic with $\operatorname{Re}(\hat{\lambda}) > 0 $.
    \item The Implicit Midpoint Rule will always learn a dynamic with $\operatorname{Re}(\hat{\lambda}) < 0 $.
\end{enumerate}
Note that for sufficiently small $h$, the Implicit Euler method may learn dynamics that are either expansive or dissipative.

In all cases, the imaginary part of $\hat{\lambda}$ preserves the sign of the ground-truth dynamics:
\[
    \operatorname{sign}\big( \operatorname{Im}(\hat{\lambda}) \big)
    = \operatorname{sign}\big( \operatorname{Im}(\lambda) \big).
\]
\end{corollary}
\begin{remark}[RK4 case]
\review{
The RK4 method is not included in Corollary~\ref{cor:unified_cons} because it requires a thorough analysis of the degree 4 polynomial:
\[
    1 + \xi + \frac{\xi^2}{2} + \frac{\xi^3}{6} + \frac{\xi^4}{24} = e^{\lambda h},
    \qquad \xi = \hat{\lambda} h,
\]
and is deferred to future work.
Nevertheless, the geometric picture remains transparent.
By zooming in on the RK4 stability region near the origin, one observes that
the stability boundary $|p(\xi)|=1$ crosses into the right half of the complex
plane along the imaginary axis in a neighborhood of $\xi = 0$.
This geometric fact implies that, when the data are generated by conservative
dynamics (i.e., $\operatorname{Re}(\lambda) = 0$), the RK4 scheme may learn an
expansive dynamics with $\operatorname{Re}(\hat{\lambda}) > 0$.}
\end{remark}
% Table~\ref{comp-table-one-step} provides a comparison of amplitude errors produced by different numerical methods in learning either conservative or dissipative dynamics.
\review{Table~\ref{comp-table-one-step} reveals a fundamental and perhaps counterintuitive conclusion: 
Classical stability of a numerical integrator is not a reliable guide for the integrator 
selection in the context of dynamical system learning. The decisive factor is the 
geometry of the integrator's stability region on the complex plane. Specifically, 
if the stability region is confined to the left half-plane, then 
$\operatorname{Re}(\hat{\lambda}) \leq 0$ is guaranteed regardless of whether the 
true dynamics are conservative or dissipative, and the learned system will correctly 
preserve the non-expansive structure of the true dynamics. Methods, such as the Implicit Euler, RK3, and RK4 methods,  whose stability regions extend into the right half-plane, violate this condition and may learn an expansive system even when the true dynamics are damped. Among the considered one-step methods, only the 
\emph{implicit midpoint rule} has its stability region confined to the left half-plane, 
and it is therefore the uniquely principled choice for dynamical system learning.}
\begin{table}[!ht]
    \centering
        \caption{The difference between dynamics $\hat{\lambda}$ learned from different numerical methods, while the true dynamics $\lambda$ are either conservative $\operatorname{Re}(\lambda)=0$ or dissipative $\operatorname{Re}(\lambda)<0$. The implicit midpoint rule effectively preserves the structural trends of the true dynamics across different methods. }\label{comp-table-one-step}\medskip
    \begin{tabular}{lllc}
                    \toprule
                    \rule{0pt}{20pt}$y^\prime=\lambda y$ & $\operatorname{Re}(\lambda)=0$ & $\operatorname{Re}(\lambda)<0$ \\
                    \midrule
                    \rule{0pt}{20pt}Explicit Euler & $\operatorname{Re}(\hat\lambda)<0$ & $\operatorname{Re}(\hat\lambda)<0$\\ %\hline
                    \rule{0pt}{20pt}RK2 & $\operatorname{Re}(\hat\lambda)<0$ & $\operatorname{Re}(\hat\lambda)<0$\\%\hline
                    \rule{0pt}{20pt}RK3 & $\operatorname{Re}(\hat\lambda)>0$ (for  $0<h\ll 1$) & Cond. Dissipative \\%\hline
                    % RK4 & Expansive (for sufficiently small $h$) & Conditionally Dissipative \\\hline
                    \rule{0pt}{20pt}Implicit Euler & $\operatorname{Re}(\hat\lambda) >0$ & Cond. Dissipative\\%\hline
                    \rule{0pt}{20pt}{Implicit Midpoint} & $\operatorname{Re}(\hat\lambda)=0$ & $\operatorname{Re}(\hat\lambda)<0$ \\
                    \bottomrule
                \end{tabular}

\end{table}

\begin{table}[h!]
\caption{Sign of $\operatorname{Im}(\hat\lambda h)$ for one-step methods, inferred from data $e^{\lambda h} = e^{a + i\theta}$.}
\label{tab:im-lambdah-one-step-trimmed}
\medskip
\centering
\renewcommand{\arraystretch}{2.5}
\begin{tabularx}{\textwidth}{l >{\raggedright\arraybackslash}X >{\raggedright\arraybackslash}X}
\toprule
\textbf{Method/Data type} & \textbf{Data~with~damping ~($a < 0$)} & \textbf{Purely oscillatory data ($a = 0$)} \\
\midrule
\vspace{5pt}
Explicit Euler &
\makecell[l]{
$\operatorname{Im}(\hat\lambda h) = e^{a} \sin\theta$\\
lagging phase
} &
\makecell[l]{
$\operatorname{Im}(\hat\lambda h) = \sin\theta$\\
small $|\theta|\Rightarrow$ small phase error
} \\

\hline
\vspace{5pt}
Implicit Euler &
\makecell[l]{
$\operatorname{Im}(\hat\lambda h) = e^{-a} \sin\theta$\\
leading phase
} &
\makecell[l]{
$\operatorname{Im}(\hat\lambda h) = \sin\theta$\\
small $|\theta|\Rightarrow$ small phase error
} \\
\hline
\vspace{5pt}
% RK2 &
% \makecell[l]{
% $\operatorname{Im}(\hat\lambda h) = \dfrac{e^a\sin\theta}{1+\operatorname{Re}(\hat\lambda h)}$\\
% } &
% \makecell[l]{
% $\operatorname{Im}(\hat\lambda h)  = \dfrac{\sin\theta}{1+\operatorname{Re}(\hat\lambda h)}$\\
% } \\
% \hline
% \vspace{5pt}
Impl. Midpoint &
\makecell[l]{ %\small
$\operatorname{Im}(\hat\lambda h) = \dfrac{4e^a \sin \theta}{|e^{\lambda h}+1|^2}$\\
lagging phase
} &
\makecell[l]{
$\operatorname{Im}(\hat\lambda h) =  \dfrac{4\sin\theta}{|e^{\lambda h}+1|^2}\approx \theta$\\
small $|\theta|\Rightarrow$ small phase error
} \\

\bottomrule
\end{tabularx}
\end{table}

\review{Table~\ref{tab:im-lambdah-one-step-trimmed} shows that all one-step methods considered correctly 
preserve the sign of $\operatorname{Im}(\hat{\lambda})$, meaning the rotation 
direction of the true dynamics is always recovered. However, small phase fluctuations 
are inevitably introduced, even for the implicit midpoint rule.}

\subsection{Phase errors}\label{sec:phase-error-one-step-methods}
Let the true solution over one sampling interval be
\(e^{\lambda h}=e^{a+i\theta}\) with prescribed step size \(h\).
We derive formulas for the imaginary part of the learned
eigenvalue \(\hat{\lambda}h\) and classify the resulting phase errors. 

Without loss of generality, $z_0 = Z_0 = 1$. Since the analytical solution of the learned dynamics takes the form $e^{\hat\lambda t}$,
\[
\operatorname{Im}(\hat\lambda h) = \phi(\theta, a) \implies 
e^{\hat\lambda t}  = e^{\operatorname{Re}(\hat\lambda t)} e^{i \phi(\theta, a) t/h}. 
\]
So $\phi(\theta, a)$ determines the frequency of oscillations in the learned dynamics, and the phase errors relative to the actual dynamics.

If $|\phi(\theta, a)|>|\theta |$, the learned dynamics has faster oscillations, and we say that the phases in the learned dynamics are "leading."
Otherwise, we say that the phases are "lagging."

%For \(|\theta|\ll1\), i.e. $|\operatorname{Im}(\lambda h)|\ll 1$.

Consider the Explicit Euler method. By Lemma~\ref{thm:main}, each discrete step with step length $h$ should be precisely $e^{\lambda h}:$
\[
1+\hat{\lambda}h =e^{(a+i\theta)}
\;\Longrightarrow\;
\operatorname{Im}(\hat{\lambda}h)=e^{a}\sin(\theta).
\]
Thus, \emph{dissipation/damping in the actual dynamics, i.e., $a<0$, leads to slower oscillations in the learned dynamics.}

In contrast, for the Implicit Euler method, we have
\[
\frac{1}{1-\hat{\lambda}h}=e^{a+i\theta}
\;\Longrightarrow\;
\operatorname{Im}(\hat{\lambda}h)=e^{-a}\sin\theta.
\]
Without dissipation, i.e., $a=0$, the Implicit Euler method will yield identical lagging phase errors (albeit different amplitudes). 
However, dissipation in the actual dynamics \emph{accelerates} the oscillations in the learned dynamics inferred by the Implicit Euler. See also Section~\ref{sec:convection-diffusion-ex} for an example involving a convection-diffusion equation. 

The amplitude error and the phase error in the systems inferred by different numerical schemes are summarized in Table~\ref{comp-table-one-step} and Table~\ref{tab:im-lambdah-one-step-trimmed}.
\subsection{Noisy Data}
\review{Note that when data are noise-free, Theorem~\ref{thm:main} shows that a single
observation suffices for learning; the learned dynamic $\hat{\lambda} h$ will fall
inside the stability region, namely $|p(\hat{\lambda} h)| \leq 1$. In practice,
however, measurements are often polluted by noise, and the learned dynamic
$\hat\lambda h$ may fail to reside in the stability region of the selected method.
A natural remedy is to collect more data, so that noise contributions average out.
 }

\revmark{%
In this subsection, we provide a perturbation analysis of the noisy least-squares problem in the scalar geometric model to analyze the structure of $\hat{\lambda} h$ as the number of sampled data points $N$ or trajectories $M$ in~\eqref{eq:LS-flow} increases.
Throughout, let
\[
a:=e^{\lambda h}\in\mathbb C,
\qquad
Z_0\in\mathbb C\setminus\{0\},
\]
and let $\xi_0,\xi_1,\dots,\xi_N$ be independent, identically distributed complex-valued random variables satisfying
$\mathbb E[\xi_n]=0$, $\mathbb E[|\xi_n|^2]=1$, and $|\xi_n|\le C$ almost surely for some constant $C>0$.
For $\sigma>0$, we write the observation noise in the form
$\eta_n:=\sigma \xi_n$,
so that $\mathbb E[\eta_n]=0$ and $\mathbb E[|\eta_n|^2]=\sigma^2$.%
}

\begin{theorem}[\revmark{Perturbation of a nearby minimizer for the noisy least-squares problem}]\label{thm:noisy-perturbation}
\revmark{For $\sigma\in\mathbb R$, define
\[
Z(t_n):=Z_0 a^n+\eta_n=Z_0 a^n+\sigma\xi_n,\qquad n=1,\dots,N,
\]
and
\[
L_\sigma(q;N):=\frac1{2N}\sum_{n=1}^N \left|\,Z(t_n)-(Z_0+\eta_0)q^n\,\right|^2,
\qquad q\in\mathbb C.
\]
Set
\[
S_N:=\sum_{n=1}^N n^2|a|^{2n-2},
\qquad
R_N:=\sum_{n=1}^N n\,|a|^{2n-2},
\]
and
\[
\Delta_N:=\sum_{n=1}^N n\,\overline a^{\,n-1}\xi_n-a\,\xi_0R_N.
\]}

\revmark{Then the following hold.}

\begin{enumerate}
\item[\rm (i)] \revmark{In the noiseless case $\sigma=0$, the function $L_0(\cdot;N)$ has the unique global minimizer
$q=a$.}

\item[\rm (ii)] \revmark{For $|\sigma|$ sufficiently small, there exists a unique local minimizer $\hat q_N(\sigma)$ in a neighborhood of $a$, with
$\hat q_N(0)=a$,
and
\begin{equation}\label{eq:noisy-perturbation}
\hat q_N(\sigma)
=
a+\sigma\,\frac{\Delta_N}{Z_0 S_N}+O(\sigma^2).
\end{equation}
In particular,
$\hat q_N(\sigma)-a=O(\sigma)$ as $\sigma\to 0$.}

\item[\rm (iii)] \revmark{Consequently,
$\mathbb E[\hat q_N(\sigma)-a]=O(\sigma^2)$,
and
\begin{equation}\label{eq:noisy-mse}
\mathbb E\bigl[|\hat q_N(\sigma)-a|^2\bigr]
=
\frac{\sigma^2}{|Z_0|^2}\,
\frac{S_N+|a|^2R_N^2}{S_N^2}
+O(\sigma^3).
\end{equation}}
\end{enumerate}

\revmark{In particular, the first-order perturbation of the nearby minimizer is distribution-free, while the leading-order mean-square error depends only on the noise variance~$\sigma^2$.}
\end{theorem}

\begin{proof}
\revmark{For part~{\rm(i)}, when $\sigma=0$,
$L_0(q;N)
=
{|Z_0|^2}(2N)^{-1}\sum_{n=1}^N |a^n-q^n|^2$.
This is nonnegative and vanishes at $q=a$. Since the $n=1$ term is
${|Z_0|^2}(2N)^{-1}|a-q|^2$,
any minimizer must satisfy $q=a$.}

\revmark{For part~{\rm(ii)}, since $L_\sigma$ depends smoothly on $(q,\overline q,\sigma)$, and since $a$ is a nondegenerate local minimizer of $L_0$, the implicit function theorem applied to the stationarity equation yields a smooth local minimizer branch
$\hat q_N(\sigma)=a+\delta_N(\sigma)$
with $\delta_N(\sigma)=O(\sigma)$ as $\sigma\to 0$.
The boundedness assumption $|\xi_n|\le C$ ensures that $|\Delta_N|$ is uniformly bounded, so the remainder is indeed $O(\sigma^2)$ for all realizations.
Expanding
$(a+\delta_N)^n=a^n+n a^{n-1}\delta_N+O(|\delta_N|^2)$,
one obtains
$Z(t_n)-(Z_0+\eta_0)(a+\delta_N)^n
=
\sigma(\xi_n-\xi_0 a^n)-Z_0 n a^{n-1}\delta_N+O(\sigma^2)$.
Hence, to first order in $\sigma$, the displacement $\delta_N$ is determined by minimizing
$\sum_{n=1}^N \left|\sigma(\xi_n-\xi_0 a^n)-Z_0 n a^{n-1}\delta\right|^2$
with respect to $\delta\in\mathbb C$. The corresponding normal equation gives
$\delta
=
\sigma\,{\Delta_N}/(Z_0 S_N)$,
establishing~\eqref{eq:noisy-perturbation}.}

\revmark{For part~{\rm(iii)}, since $\mathbb E[\xi_n]=0$, one has $\mathbb E[\Delta_N]=0$, and therefore
$\mathbb E[\hat q_N(\sigma)-a]=O(\sigma^2)$.
By independence and zero mean,
$\mathbb E[|\Delta_N|^2]
=
S_N+|a|^2R_N^2$,
which yields~\eqref{eq:noisy-mse}.}
\end{proof}

\begin{corollary}[\revmark{Multiple independent trajectories}]\label{cor:noisy-multiple-shots}
\revmark{For $m=1,\dots,M$, let
$Z_m(t_n):=Z_0 a^n+\sigma\xi_{n,m}$, $n=1,\dots,N$,
and define
\[
L_\sigma(q;N,M):=\frac1{2NM}\sum_{m=1}^M\sum_{n=1}^N
\left|\,Z_m(t_n)-(Z_0+\sigma\xi_{0,m})q^n\,\right|^2,
\qquad q\in\mathbb C.
\]
Assume that $\{\xi_{n,m}\}_{0\le n\le N,\, 1\le m\le M}$ are i.i.d.\ with
$\mathbb E[\xi_{n,m}]=0$, $\mathbb E[|\xi_{n,m}|^2]=1$, and $|\xi_{n,m}|\le C$ a.s.
Let $\hat q_{N,M}(\sigma)$ be the local minimizer near $a$. For each $m$, define
$\Delta_N^{(m)}
:=
\sum_{n=1}^N n\,\overline a^{\,n-1}\xi_{n,m}
-
a\,\xi_{0,m}R_N$.
Then
\begin{equation}\label{eq:noisy-multi-perturbation}
\hat q_{N,M}(\sigma)
=
a+\sigma\,\frac{1}{MZ_0 S_N}\sum_{m=1}^M \Delta_N^{(m)}+O(\sigma^2).
\end{equation}
Consequently,
$\mathbb E[\hat q_{N,M}(\sigma)-a]=O(\sigma^2)$,
and
\begin{equation}\label{eq:noisy-multi-mse}
\mathbb E\bigl[|\hat q_{N,M}(\sigma)-a|^2\bigr]
=
\frac{\sigma^2}{M|Z_0|^2}\,
\frac{S_N+|a|^2R_N^2}{S_N^2}
+O(\sigma^3).
\end{equation}
In particular, the leading-order mean-square fluctuation is reduced by a factor of $M$ relative to the single-trajectory case.}
\end{corollary}

\begin{proof}
\revmark{Writing $\hat q_{N,M}(\sigma)=a+\delta_{N,M}(\sigma)$ and expanding as in the proof of the theorem gives, for each $m,n$,
$Z_m(t_n)-(Z_0+\sigma\xi_{0,m})(a+\delta_{N,M})^n
=
\sigma(\xi_{n,m}-\xi_{0,m}a^n)-Z_0 n a^{n-1}\delta_{N,M}+O(\sigma^2)$.
The normal equation yields
$\delta
=
\sigma\,(MZ_0 S_N)^{-1}\sum_{m=1}^M \Delta_N^{(m)}$,
proving~\eqref{eq:noisy-multi-perturbation}.
By independence across $m$,
$\mathbb E\left|\sum_{m=1}^M \Delta_N^{(m)}\right|^2
=
M\,\mathbb E\bigl[|\Delta_N^{(1)}|^2\bigr]
=
M(S_N+|a|^2R_N^2)$,
which gives~\eqref{eq:noisy-multi-mse}.}
\end{proof}

\revmark{The preceding analysis is carried out in the amplification-factor variable $q$. The connection to the eigenvalue learning problem~\eqref{eq:learning-one-step} is provided by the following reduction.}

\begin{corollary}\label{cor:noisy-lambda}
\revmark{Let $p$ be a nonconstant characteristic polynomial from Table~\ref{tab:characteristic-poly}, and let $\hat\lambda$ satisfy
$p(\hat\lambda h)=a$ with $p'(\hat\lambda h)\neq 0$.
Let $\hat\lambda_N(\sigma)$ be the unique branch near $\hat\lambda$ such that
$p(\hat\lambda_N(\sigma) h)=\hat q_N(\sigma)$.
Then
\begin{equation}\label{eq:noisy-lambda-perturbation}
\hat\lambda_N(\sigma)-\hat\lambda
=
\frac{\hat q_N(\sigma)-a}{h\,p'(\hat\lambda h)}+O(\sigma^2)
=
\sigma\,\frac{\Delta_N}{Z_0\, h\,p'(\hat\lambda h)\,S_N}+O(\sigma^2).
\end{equation}
Moreover,
$\mathbb E[\hat\lambda_N(\sigma)-\hat\lambda]=O(\sigma^2)$,
and
\begin{equation}\label{eq:noisy-lambda-mse}
\mathbb E\bigl[|\hat\lambda_N(\sigma)-\hat\lambda|^2\bigr]
=
\frac{\sigma^2}{|h\,p'(\hat\lambda h)|^2|Z_0|^2}\,
\frac{S_N+|a|^2R_N^2}{S_N^2}
+O(\sigma^3).
\end{equation}}
\end{corollary}

\begin{proof}
\revmark{Since $p'(\hat\lambda h)\neq 0$, the implicit function theorem yields a unique local branch $\mu=\mu(\rho)$ solving $p(\mu h)=\rho$ for $\rho$ near $a$, with $\mu(a)=\hat\lambda$. Expanding at $\hat\lambda$,
$p(\hat\lambda h+h\delta)=a+h\,p'(\hat\lambda h)\delta+O(\delta^2)$,
and setting $p(h\hat\lambda_N(\sigma))=\hat q_N(\sigma)$ gives
the stated formula. The moment estimates follow from Theorem~\ref{thm:noisy-perturbation}.}
\end{proof}

\begin{remark}
\revmark{The simple-root assumption $p'(\hat\lambda h)\neq 0$ is essential. If $p'(\hat\lambda h)=0$, then the perturbation of nearby roots is generally not linear in $\hat q_N(\sigma)-a$.}
\end{remark}

\begin{remark}
\revmark{The boundedness assumption $|\xi_n|\le C$ guarantees that $|\Delta_N|$ is uniformly bounded over all realizations, so the implicit function theorem delivers a deterministic $O(\sigma^2)$ remainder. If the noise is instead sub-Gaussian (e.g., Gaussian), the same expansion holds with probability at least $1-\delta$ for any $\delta>0$, provided $\sigma$ is small enough depending on $\delta$ and $N$: standard concentration inequalities give $|\Delta_N|=O(\sqrt{N\log(1/\delta)})$ with the stated probability, and the implicit function theorem argument applies on this event. The moment formulas~\eqref{eq:noisy-mse} and~\eqref{eq:noisy-multi-mse} remain valid whenever $\mathbb{E}[|\xi_n|^2]=1$ and $\mathbb{E}[|\xi_n|^4]<\infty$.}
\end{remark}

The following numerical example illustrates \revmark{Corollary~\ref{cor:noisy-multiple-shots}} with the conservative scalar ODE $\frac{dx}{dt} = ix, \quad x(0) = x_0$,
whose exact solution traces the orbit. We recover $\hat{\lambda}$ by
minimizing~\eqref{eq:LS-flow} under the implicit midpoint discretization,
which is energy-preserving for purely imaginary eigenvalues, meaning
$|p(\hat{\lambda}h)| = 1$ holds exactly when $\hat{\lambda}$ is identified
correctly. Table~\ref{tab:noise-M} shows that for small $M$, insufficient
data causes a negative real part, breaking
the energy-preservation condition and introducing artificial dissipation or
growth into the predicted dynamics.

\begin{table}[h!]
\caption{Effect of $M$ on eigenvalue estimation for $\dot{x} = ix$, 
identified via the implicit midpoint rule ($h = 0.1$, $\sigma = 0.1$).}
\label{tab:noise-M}
\medskip
\centering
\renewcommand{\arraystretch}{2.5}
\begin{tabularx}{\textwidth}{l >{\raggedright\arraybackslash}X 
                               >{\raggedright\arraybackslash}X}
\toprule
\textbf{$M$} & \textbf{Learned eigenvalue $\hat{\lambda}$} 
             & \textbf{$|p(\hat{\lambda}h)|$} \\
\midrule
\vspace{5pt}
Reference (no noise) &
\makecell[l]{$\hat{\lambda} = \phantom{-}0.0000 + 1.0008i$} &
\makecell[l]{$|z| = 1.00000$} \\
\hline
\vspace{5pt}
$M = 1$ &
\makecell[l]{$\hat{\lambda} = -0.0009 + 1.0018i$} &
\makecell[l]{$|z| = 0.99990$} \\
\hline
\vspace{5pt}
$M = 5$ &
\makecell[l]{$\hat{\lambda} = \phantom{-}0.0001 + 1.0010i$} &
\makecell[l]{$|z| = 1.00001$} \\
\hline
\vspace{5pt}
$M = 100$ &
\makecell[l]{$\hat{\lambda} = \phantom{-}0.0000 + 1.0009i$} &
\makecell[l]{$|z| = 1.00000$} \\
\bottomrule
\end{tabularx}
\end{table}

Figure~\ref{fig:noise_M_comparison} further confirms this: with $M = 1$ the 
learned dynamics deviate from the true orbit, causing the long-term prediction 
to drift; with $M = 100$, where 5 trajectories are randomly selected from the 
pool for display, the prediction remains 
on the orbit.

\begin{figure}[h!]
    \centering
    \begin{subfigure}[t]{0.24\textwidth}
        \centering
        \includegraphics[width=\linewidth]{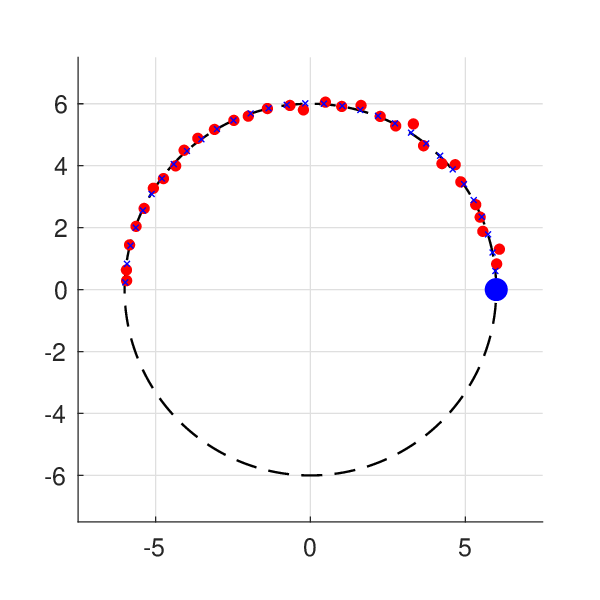}
        \caption{Observations, \\$M=1$.}
        \label{fig:M1_sample}
    \end{subfigure}
    \hfill
    \begin{subfigure}[t]{0.24\textwidth}
        \centering
        \includegraphics[width=\linewidth]{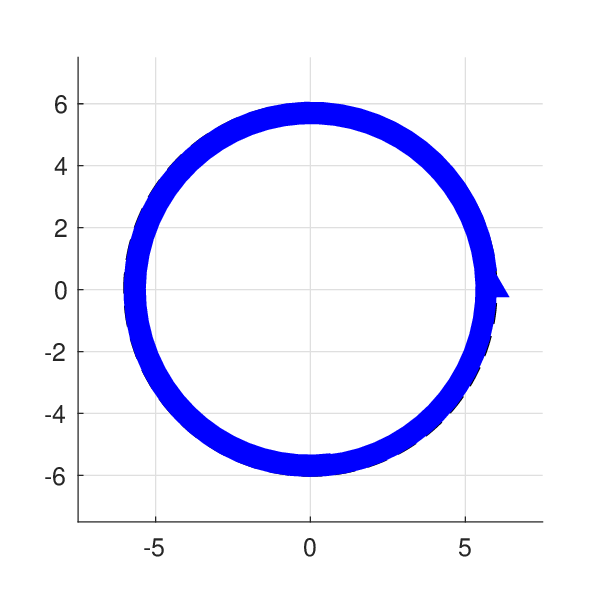}
        \caption{Prediction, \\$M=1$.}
        \label{fig:M1_prediction}
    \end{subfigure}
    \hfill
    \begin{subfigure}[t]{0.24\textwidth}
        \centering
        \includegraphics[width=\linewidth]{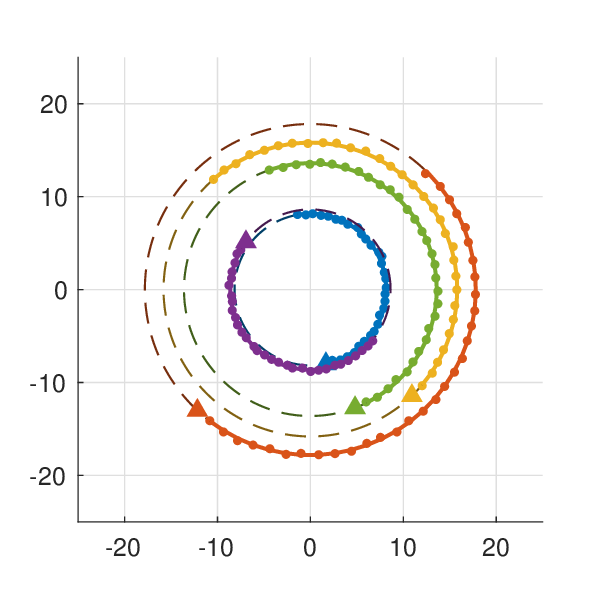}
        \caption{Observations, \\$M=100$ (5 shown).}
        \label{fig:M100_sample}
    \end{subfigure}
    \hfill
    \begin{subfigure}[t]{0.24\textwidth}
        \centering
        \includegraphics[width=\linewidth]{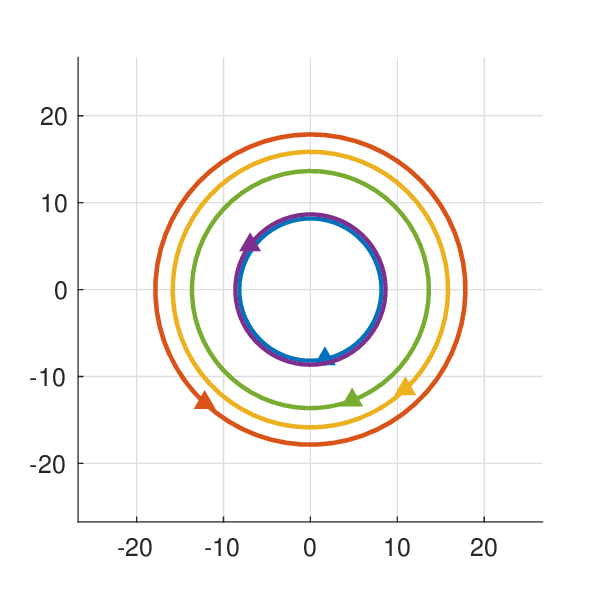}
        \caption{Prediction, \\$M=100$ (5 shown).}
        \label{fig:M100_prediction}
    \end{subfigure}
    \caption{\review{Learning quality under additive noise ($\sigma=0.1$, $h=0.1$).
    The horizontal and vertical axes are the real and imaginary parts of the
    state; the true solution is the unit circle (dashed). Dots are
    noise-corrupted observations. The right two panels correspond to $M=100$
    initial points used for identification; for visual clarity, 5 trajectories
    are randomly selected from the pool and displayed in distinct colors.
    With $M=1$ the predicted trajectory drifts away from the unit circle;
    with $M=100$ it remains on it.}}
    \label{fig:noise_M_comparison}
\end{figure}

\section{Linear systems learning with linear multistep methods}
\review{This section analyzes the learning effect using linear multistep methods.
We first use Lemma~\ref{thm:main} to show that when the true dynamic is conservative, then the learned dynamic $\hat{\lambda}$, under noise-free data, must lie on the boundary of the absolute stability region of the selected method. We then analyze the structure of the characteristic roots of the linear multistep
methods. When the data are clean, the spurious roots with magnitude greater than one
are suppressed and do not contribute to the learned dynamics.
However, when the data are contaminated by noise, these spurious roots can be excited,
causing the learned discrete dynamics to diverge.}
Linear multistep methods (LMMs) for the scalar differential equation, $\dfrac{dz}{dt} = \lambda z,$ take the general form:
\begin{eqnarray}\label{eq:LMMs}
    \sum_{j=0}^k \alpha_j z_{n+j} = \lambda h \sum_{j=0}^k \beta_j z_{n+j},~~~\alpha_k=1,
\end{eqnarray}
 with the given initial conditions
\begin{equation}\label{eq:LMMs-init-data}
z_j = Z_0e^{ j\lambda h},~~~j=0,1,2,\cdots, k-1.    
\end{equation}
For example, the  Leap-Frog scheme:
\begin{eqnarray}\label{eq:Exp-Trapezoidal}
    z_{n+2} = z_{n} + 2\lambda h z_{n+1}.
\end{eqnarray}

Define the characteristic polynomial of a linear multistep method
\begin{equation}
    \pi_{\lambda h}(z):=\rho(z)- \lambda h\kappa(z)=0,
\end{equation}
where
\begin{eqnarray}\label{LMM-characteristic-polynomials}
        \rho(z) = \sum_{j=0}^k \alpha_j z^{k-j},~\text{and}~~\review{\kappa(z)} = \sum_{j=0}^k \beta_j z^{k-j}.
\end{eqnarray}
For the given $\lambda h$, if the roots are unique, 
the solution of the recurrence relation \eqref{eq:LMMs} has the general form
\begin{equation}\label{eq:LMM_general_soln}
    z_n = \sum_{j=1}^k c_j \zeta^n_j.
\end{equation}
If a root $\zeta_j$ has a multiplicity $\mu$, then the general solution will include terms of the form
\[
n^{\nu}\zeta_j^n,~~~\nu=0,1,2,\cdots, \mu-1.
\]

In standard \review{practice}, the coefficients $c_1,\cdots, c_k$ are determined by the given initial data: $z_0, z_1,\cdots, z_{k-1}.$  
We denote $c_j\equiv c_j(\xi; z_0,\cdots, z_{k-1})$ to emphasize the smooth dependence on $\xi$ and the initial \review{conditions}.
Consequently,
\[
z_n \equiv z_n(\xi; z_0,\cdots, z_{k-1}).
\]

For a given $\xi,$ some of the roots may lie outside of the unit disc and some inside.
If the given initial conditions lie in the span of those $\zeta_j \equiv \zeta_j(\xi)$ lying strictly inside the unit disc,  $|z_n|$ is uniformly bounded for $n\ge 0$.
Thus, the stability of $z_n$ is determined by 
$\xi = \lambda h$  as well as the given initial condition. 

 The region of absolute stability for an LMM is defined as the set for $\xi=\lambda h$ in which the roots of $\pi_{\lambda h}(z)$ lie within the unit disc on the complex plane; i.e.
\begin{equation*}\label{abs-region-LMM}
    \mathcal{R}_A:=\{ \xi\in\mathbb{C}: \text{for any}~z\in\mathbb{C}~\text{such that}~\rho(z)-\xi\kappa(z) =0~\implies|z|<1\}.
\end{equation*}

In addition, we introduce the notion of \emph{partial stability}, which refers to the case where the method is stable for properly chosen initial conditions. 
We define the region of partial stability
as
\begin{equation*}\label{par_stable-region-LMM}
    \mathcal{R}_P:=\{ \xi\in\mathbb{C}: \exists~z_j\in\mathbb{C},~\rho(z_j)-\xi\kappa(z_j) =0, j=1,2, \text{and}~ |z_1|\le 1,|z_2|> 1\}.
\end{equation*}

For simulating dissipative problems, it is crucial to select $h$ so that $\lambda h\in \mathcal{R}_A$. This way, the computation will be stable \emph{for all} possible initial data. 
However, the main interest in this paper is in what $\hat\lambda h$ would be for a chosen LMM. In this section, we show that it is possible to identify $\hat\lambda h\in \mathcal{R}_P$ from a given sequence of observations with exponentially decreasing amplitudes. 
A potential consequence is that the identified system can be unstable for many other initial conditions.

We consider the learning problem for $k$-step linear multistep methods as the following constrained optimization problem, in which the data sampling time step $H=mh$ requires $m$ steps ($m>k$) of the integrator using step size $h$:
\begin{equation}\label{LMM-learning-problem}
\left\{
\begin{aligned}
        \min_{\xi\in\mathbb{C}}~
        \dfrac{1}{N}&\sum_{n=1}^N \left|z_{mn}(\xi, c_1,\cdots,c_k)-Z_{mn}\right|^2,\\
    &\sum_{j=0}^k \alpha_j z_{n+j}(\xi) = \xi \sum_{j=0}^k \beta_j z_{n+j}(\xi),~~~\alpha_k=1,~~\text{and}\\
   & z_j(\xi) = Z_j,~~j=0,1,\cdots, k-1,
\end{aligned}
\right.
\end{equation} 
where $Z_{mn}$ for $n=0,1,\cdots, N$, are the observation data, and $z_0, z_1,\cdots, z_{k-1}$ are the initial data for the problem; these initial conditions can be interpolated from the observed data points. 

For the model problem $z^\prime=\lambda z$, it is equivalent to:
\begin{equation}\label{LMM-learning-problem-v2}
\left\{
\begin{aligned}
        \min_{\xi\in\mathbb{C}}~
        \dfrac{1}{N}&\sum_{n=1}^N 
        | \sum_{j=1}^k c_{j} \zeta^{mn}_{j}(\xi) - Z_{mn} |^2,~~~n=1,2,\cdots,N,\\
   & \sum_{j=1}^k c_j \zeta^{n}_j(\xi) = Z_n,~~n=0,1,\cdots, k-1.
\end{aligned}
\right.
\end{equation} 
Note that if $Z_n = \zeta_j^n,j=0,1,2,\cdots, k-1,$ i.e., then $c_{n} = 0 $ for all $n \neq j.$
Conversely, \emph{if $Z_n=e^{n \lambda h}$, there exists a $\hat \xi = \hat\lambda h$ such that $c_{n} = 0 $ for all $n \neq j.$}
\begin{itemize}
    \item If $\operatorname{Re}(\lambda)<0,$
    {$\hat\lambda h$ is \textbf{not} necessarily be in $\mathcal{R}_A$, }
    because the unstable roots cannot contribute to forming the values of $z_n$ to fit the data with exponentially decreasing amplitudes.
    \item  However, when one computes the learned iterations 
    \[
    \sum_{j=0}^k \alpha_j z_{n+j} = \hat\lambda h \sum_{j=0}^k \beta_j z_{n+j},
    \]
    with initial conditions that excite the unstable roots, the resulting simulation becomes unstable.
    \item Even if all the characteristic roots, $\zeta_j(\hat\lambda)$ lie inside the unit disc, $\hat\lambda h$ may not! This possibility is evident for those methods whose $\mathcal{R}_A$ overlaps with the right half of the complex plane.
\end{itemize}

In the remainder of this section, we consider the case when the sampling time step is the same as the step size of the numerical scheme, observations have no noise, and $Z_0 = 1$; i.e., $m=1$ and  $Z_n = e^{n \lambda h}$.

\begin{lemma}\label{lemma:lmm}
    The global minimum of the learning problem \eqref{LMM-learning-problem}, with $m=1$ and $Z_n = e^{n \lambda h}$, $n=0,1,\cdots, N$, is attained at 
    \begin{eqnarray}\label{eq:LMM-condition}
        \hat\xi = \frac{\rho(e^{\lambda h})}{\kappa(e^{\lambda h})},
    \end{eqnarray}
    and  the solution to the recurrence relation matches the analytical solution of the differential equation; i.e.
    $z_n(\hat \xi) = e^{n\lambda h}.$
\end{lemma}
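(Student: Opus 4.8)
The plan is to exhibit $\hat\xi = \rho(e^{\lambda h})/\kappa(e^{\lambda h})$ as a point where the fitting objective in \eqref{LMM-learning-problem} vanishes, and then invoke the nonnegativity of that objective to conclude it is a global minimizer. The guiding observation is that $\hat\xi$ is precisely the value of $\xi$ for which $e^{\lambda h}$ is a characteristic root of the scheme: substituting into $\pi_\xi(z)=\rho(z)-\xi\kappa(z)$ gives $\pi_{\hat\xi}(e^{\lambda h}) = \rho(e^{\lambda h}) - \hat\xi\,\kappa(e^{\lambda h}) = 0$ directly from the definition of $\hat\xi$. This presupposes $\kappa(e^{\lambda h})\neq 0$, which is exactly the condition under which $\hat\xi$ is well-defined.

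First I would verify that the exact-solution sequence solves the learned recurrence at $\xi=\hat\xi$. Set $w_n := e^{n\lambda h} = (e^{\lambda h})^n$. Plugging $w_{n+j}=(e^{\lambda h})^n (e^{\lambda h})^j$ into the recurrence $\sum_{j=0}^k(\alpha_j-\hat\xi\beta_j)\,z_{n+j}=0$ factors out $(e^{\lambda h})^n$ and leaves $\pi_{\hat\xi}(e^{\lambda h})=0$, so $w_n$ satisfies the recurrence for every $n$. Since the prescribed initial data are $z_j = Z_j = e^{j\lambda h}=w_j$ for $j=0,\dots,k-1$, and since the recurrence is forward-solvable — the leading coefficient is $\alpha_k-\hat\xi\beta_k = 1-\hat\xi\beta_k$, which is automatically nonzero for explicit methods where $\beta_k=0$ and needs only a one-line check otherwise — the generated trajectory is uniquely determined by its first $k$ values. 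Therefore $z_n(\hat\xi)=w_n=e^{n\lambda h}$ for all $n$, establishing the stated identity.

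Finally, with $m=1$ the data are $Z_n=e^{n\lambda h}$, so every residual vanishes, $z_n(\hat\xi)-Z_n=0$, and the objective equals $0$; since it is a normalized sum of squared moduli it is bounded below by $0$, whence $\hat\xi$ attains the global minimum. The main subtlety — and the only place the argument is not purely formal — is the uniqueness/forward-solvability step: one must confirm $1-\hat\xi\beta_k\neq 0$ so that the initial data pin down the whole trajectory, and one should note that even if $e^{\lambda h}$ happens to coincide with another characteristic root (a repeated root, requiring the $n^{\nu}\zeta_j^n$ terms flagged after \eqref{eq:LMM_general_soln}), the uniqueness-of-recurrence argument still applies cleanly and avoids any need to manipulate the explicit root representation in \eqref{LMM-learning-problem-v2}.
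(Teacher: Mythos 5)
Your proof is correct and follows essentially the same route as the paper's: the paper derives $\hat\xi$ by forcing the residual at the first computed step to vanish and then runs an induction showing $z_{n}(\hat\xi)=e^{n\lambda h}$, which is exactly what you repackage as ``the exponential sequence satisfies the recurrence $\pi_{\hat\xi}(e^{\lambda h})=0$ plus uniqueness of the forward-solved trajectory,'' both arguments resting on the same nonvanishing condition $\alpha_k-\hat\xi\beta_k\neq 0$. If anything, you are more careful than the paper, which asserts that condition without comment, whereas you flag it explicitly as the one step requiring verification.
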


\begin{proof}
    Consider the first $k$ terms of the LMMs, 
    \begin{eqnarray}\label{eq:Lemma3.1_proof}
        \sum_{j=0}^k \alpha_j e^{j\lambda h} = \xi \sum_{j=0}^k \beta_j e^{j\lambda h},~~~\alpha_k=1
    \end{eqnarray}
    and it is straightforward to find $\xi = \dfrac{\sum_{j=0}^k \alpha_j e^{j\lambda h}}{\sum_{j=0}^k \beta_j e^{j\lambda h}} = \dfrac{\rho(e^{\lambda h})}{\kappa(e^{\lambda h})}$.\\
    Next, the numerical solution at $z_{k+1}$ follows the following formula,
    \begin{eqnarray}
        &&\alpha_{k}z_{k+1} + \sum_{j=0}^{k-1} \alpha_j e^{(j+1)\lambda h} = \xi\left(\beta_k z_{k+1}+\sum_{j=0}^{k-1} \beta_j e^{(j+1)\lambda h}\right)\\
         = &&\alpha_{k}z_{k+1} + e^{\lambda h}\sum_{j=0}^{k-1} \alpha_j e^{j\lambda h} = \xi\left(\beta_k z_{k+1}+e^{\lambda h}\sum_{j=0}^{k-1} \beta_j e^{j\lambda h}\right).
    \end{eqnarray}\label{eq:lmm_proof}
    If $\xi = \dfrac{\sum_{j=0}^k \alpha_j e^{j\lambda h}}{\sum_{j=0}^k \beta_j e^{j\lambda h}} = \dfrac{\rho(e^{\lambda h})}{\kappa(e^{\lambda h})}$, 
    \begin{eqnarray*}
        \xi \left(e^{\lambda h}\sum_{j=0}^{k-1} \beta_j e^{j\lambda h}\right) = e^{\lambda h}\left(\sum_{j=0}^k \alpha_j e^{j\lambda h}-\xi\beta_k e^{k\lambda h}\right).
    \end{eqnarray*}
    Thus, we can write the solution at $n = k+1$ as:
    \begin{eqnarray*}
        &&\alpha_{k}z_{k+1} + e^{\lambda h}\sum_{j=0}^{k-1} \alpha_j e^{j\lambda h} = \xi\left(\beta_k z_{k+1}\right)+e^{\lambda h}\left(\sum_{j=0}^k \alpha_j e^{j\lambda h}-\xi\beta_k e^{k\lambda h}\right) \\
        \Rightarrow &&\alpha_{k}\left(z_{k+1}-e^{\lambda h} e^{k\lambda h}\right) = \xi \beta_k\left(z_{k+1}-e^{\lambda h}e^{k\lambda h}\right).
    \end{eqnarray*}
    Since $\alpha_k - \xi \beta_k \neq 0$, we must have
    \begin{eqnarray*}
        z_{k+1}-e^{\lambda h}e^{k\lambda h} = 0 \Rightarrow z_{k+1} = e^{(k+1)\lambda h}.
    \end{eqnarray*}
    If $\xi$ is chosen to be $\dfrac{\rho(e^{\lambda h})}{\kappa(e^{\lambda h})}$, then,  by induction,  $z_{n}(\xi) = e^{n\lambda h}$ for all $n$.
\end{proof}

% \begin{table}[h!]
% \caption{Stability and root behavior for two-step linear multistep methods.}\label{tab:two-step-LMM-roots-suumary}\medskip
% \centering
% \renewcommand{\arraystretch}{1.5}
% \begin{tabular}{@{}llll@{}}
% \toprule
% \textbf{Method} & \textbf{Characteristic Polynomial} & \textbf{Repeated Roots} & \textbf{Mixed Magnitudes?} \\
% \midrule
% Leap-Frog &
% $z^2 - 2\lambda h z - 1$ &
% $\lambda h = \pm i$ &
% Always ($z_1 z_2 = -1$) \\

% BDF2 &
% $(3 - 2\lambda h)z^2 - 4z + 1$ &
% $\lambda h = \frac{5}{2}$ &
%  $\operatorname{Re}(\lambda h)\gg 1$, $\operatorname{Im}(\lambda h)\approx 0$ \\

% AB2 &
% $z^2 - \left(1 + \frac{3}{2}\lambda h\right)z + \frac{1}{2}\lambda h$ &
% $\left(1 + \frac{3}{2}\lambda h\right)^2 = 2\lambda h$ &
% Only if $\operatorname{Im}(\lambda)\neq 0$ \\

% AM2 &
% $z^2 - \left(1 - \frac{1}{2}\lambda h\right)z - \frac{1}{2}\lambda h$ &
% $\left(1 - \frac{1}{2}\lambda h\right)^2 = 2\lambda h$ &
% Only if  $\operatorname{Im}(\lambda)\neq 0$ \\
% \bottomrule
% \end{tabular}
%\end{table}

\begin{theorem}\label{thm:conservative-dynamics-LMMs}
    Assume that $\operatorname{Re}(\lambda)=0$ in \eqref{eq:scalar}.
    If $\hat\lambda$ is a global minimizer of \eqref{LMM-learning-problem}, with $m=1$ and $Z_n = e^{n \lambda h}$, $n=0,1,\cdots, N$,
    %with $S_h^n$ being a Linear Multistep Method, 
    it must lie on the boundary of the numerical method's absolute stability region.
\end{theorem}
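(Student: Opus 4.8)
The plan is to leverage Lemma~\ref{lemma:lmm}, which already pins down the global minimizer in closed form, and then to recognize the resulting value of $\hat\xi=\hat\lambda h$ as a point on the boundary-locus curve of the method. First I would invoke Lemma~\ref{lemma:lmm}: since the data are $Z_n=e^{n\lambda h}$ with $m=1$, the global minimum of \eqref{LMM-learning-problem} equals $0$ and is attained at
\[
\hat\xi=\hat\lambda h=\frac{\rho(e^{\lambda h})}{\kappa(e^{\lambda h})},
\]
provided $\kappa(e^{\lambda h})\neq 0$, a nondegeneracy condition I would record at the outset. Because $\operatorname{Re}(\lambda)=0$ and $h>0$, the sampled amplitude satisfies $|e^{\lambda h}|=1$, so I may write $e^{\lambda h}=e^{i\theta}$ with $\theta=h\operatorname{Im}(\lambda)$.

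The decisive observation is that $z_\star:=e^{\lambda h}$ is a root of the characteristic polynomial $\pi_{\hat\xi}(z)=\rho(z)-\hat\xi\,\kappa(z)$. Indeed, substituting the closed form for $\hat\xi$ yields
\[
\pi_{\hat\xi}(z_\star)=\rho(e^{\lambda h})-\frac{\rho(e^{\lambda h})}{\kappa(e^{\lambda h})}\,\kappa(e^{\lambda h})=0.
\]
Hence $\pi_{\hat\xi}$ possesses a root precisely on the unit circle, $|z_\star|=1$. By the definition of the absolute stability region $\mathcal{R}_A$, which requires \emph{every} root to satisfy $|z|<1$, this immediately rules out $\hat\xi\in\mathcal{R}_A$: the point $\hat\xi$ cannot be interior to the stability region. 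Equivalently, since $\hat\xi=\rho(e^{i\theta})/\kappa(e^{i\theta})$, the point $\hat\xi$ lies on the boundary locus $\Gamma:=\{\rho(e^{i\phi})/\kappa(e^{i\phi}):\phi\in[0,2\pi)\}$ produced by the standard boundary-locus method, namely the curve where $\pi_\xi$ acquires a root of modulus one.

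What remains, and this is the step I expect to be the main obstacle, is to upgrade ``$\hat\xi$ lies on the root locus $\Gamma$'' to ``$\hat\xi$ lies on the topological boundary $\partial\mathcal{R}_A$.'' The two can differ: $\Gamma$ may self-intersect or re-enter the strictly unstable region, so a priori a locus point could carry a root on the unit circle while a second root already lies strictly outside, placing it in the interior of the complement of $\mathcal{R}_A$ rather than on $\partial\mathcal{R}_A$. To close this gap I would argue by continuity of the roots of $\pi_\xi$ in $\xi$ together with consistency and zero-stability of the LMM. At $\xi=0$ the characteristic polynomial reduces to $\rho(z)$, whose principal root is $z=1$ by consistency and whose remaining roots lie strictly inside the unit disc by zero-stability; thus the origin already sits on $\partial\mathcal{R}_A$, and along $\Gamma$ near $\xi=0$ the principal root $e^{i\theta}$ is the \emph{only} root on the circle while the spurious roots stay strictly inside by continuity. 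Consequently a suitable arbitrarily small perturbation of $\hat\xi$ moves the simple principal root strictly inside the disc while keeping the remaining roots inside, exhibiting $\hat\xi$ as a limit of points of $\mathcal{R}_A$; combined with $\hat\xi\notin\mathcal{R}_A$ this gives $\hat\xi\in\partial\mathcal{R}_A$. I would state the argument under the mild Nyquist-type restriction on $\operatorname{Im}(\lambda h)$ that guarantees $e^{\lambda h}$ is the principal root, so that this continuity reasoning applies verbatim.
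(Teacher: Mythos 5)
Your first two steps are exactly the paper's proof: the paper invokes Lemma~\ref{lemma:lmm}, notes that $\operatorname{Re}(\lambda)=0$ forces $|z_n(\hat\lambda h)|\equiv 1$ (equivalently, $e^{\lambda h}$ is a unit-modulus root of $\pi_{\hat\xi}$), and then simply asserts that this places $\hat\lambda h$ on $\partial\mathcal{R}_A$. Where you go beyond the paper is in recognizing that this last assertion is a leap: membership in the boundary locus $\Gamma=\{\rho(e^{i\phi})/\kappa(e^{i\phi})\}$ only rules out $\hat\xi\in\mathcal{R}_A$; it does not by itself put $\hat\xi$ in $\overline{\mathcal{R}_A}$, since a spurious root could already lie strictly outside the disc. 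That gap is real, and the paper does not address it.

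Your patch for the gap, however, rests on a false premise. Zero-stability does \emph{not} imply that the spurious roots of $\rho$ lie strictly inside the unit disc; it only excludes roots outside the closed disc and repeated roots on the circle. What your argument needs is the strong root condition, which Adams methods ($\rho(z)=z^k-z^{k-1}$) and BDF2 ($\rho(z)=z^2-\tfrac{4}{3}z+\tfrac{1}{3}$) satisfy, but Leap-Frog does not: there $\rho(z)=z^2-1$, and since the roots of $\pi_\xi$ always satisfy $\zeta_1\zeta_2=-1$, the spurious root sits on the unit circle whenever the principal root does, so no perturbation of $\hat\xi$ can place both roots strictly inside; indeed $\mathcal{R}_A$ as defined in the paper is empty for Leap-Frog, and neither your argument nor the paper's one-line assertion can hold for that scheme without relaxing the definition of the stability region. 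Separately, your continuity argument only propagates ``spurious roots strictly inside'' from $\xi=0$ along $\Gamma$ for $\theta$ near $0$; the Nyquist condition~\eqref{def:Nyquist-cond} ($|\operatorname{Im}(\lambda h)|<\pi$) does not guarantee this, because a spurious root can reach the unit circle at some $|\theta^*|<\pi$ (precisely where $\Gamma$ self-intersects), so the hypothesis you actually need is that no spurious root of $\pi_{\xi(\theta')}$ touches the circle for $|\theta'|\le|\operatorname{Im}(\lambda h)|$. Under the strong root condition and that restriction, your implicit-function-theorem step (simple principal root, $\kappa(e^{\lambda h})\neq 0$, nearby $\xi$ pushing it inside while the spurious roots remain inside) does legitimately upgrade ``on the locus'' to ``on $\partial\mathcal{R}_A$,'' and in that regime your proof is more complete than the one in the paper.
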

\begin{proof}
    Since $\operatorname{Re}(\lambda)=0$, by Lemma~\ref{lemma:lmm},  $|z_n(\hat\lambda h)|\equiv 1.$ This means $\hat\lambda h\in \partial \mathcal{R}_A.$
\end{proof}

See Figure~\ref{fig:hat-lambda-vs-RA} that compares $\hat\lambda h$ to the region of absolute stability for AB2, AM2, and the Leap-Frog scheme.
See Figure~\ref{fig:AB-AM-dissipative} that compares $\hat\lambda$
and the ground truth $\lambda$ for various step sizes.

\subsection{Analysis of four two-step methods}

In this section, we focus on analyzing the two-step methods. For higher-order methods, readers can refer to the arXiv version of this manuscript for further details~\cite{Lu25}.

For two-step methods, we first analyze the co-existence of stable, unstable, and repeated roots by the quadratic formula. We summarize the analysis for four classical two-step methods in Table~\ref{tab:two-step-LMM-roots-summary}.

Next, with the knowledge of the characteristic roots, we study the 
optimal parameter, $\hat\lambda h$, of the learning problem under a simpler setup. 
We consider observation data of \( Z_n = \zeta^n \), where 
\[ \zeta = e^{a + i\theta},~a \le 0,\]
such that $|Z_n|\leq 1$.

The objective is to determine whether the numerical scheme could inadvertently learn an unstable dynamical system ($\operatorname{Re}(\hat{\lambda}) > 0$), potentially also reversing the direction of oscillations ($\operatorname{sign}(\operatorname{Im}(\hat{\lambda}))\neq \operatorname{sign}(\operatorname{Im}(\zeta))$).
 The analysis involves examining the rational functions over two-step methods,
\[
\hat{\lambda} h = \frac{\rho(\zeta)}{\kappa(\zeta)},
\]
which are listed in Table~\ref{tab:explicit_formula}. Then we delve into the analysis of the amplitude and rotation error over these formulas, summarized in Table~\ref{tab:lmm_two_step}.

From the table, we see that AB2 and BDF2 are not generally viable methods for learning dissipative systems with oscillations, because they may lead to the inference of expansive systems.

\begin{table}[h!]
\caption{Explicit formula of the learned parameter $\hat{\lambda}h$ for two-step methods.}
\label{tab:explicit_formula}
\medskip
\centering
\renewcommand{\arraystretch}{1.8}
\begin{tabularx}{\textwidth}{
>{\centering\arraybackslash}X
>{\centering\arraybackslash}X
>{\centering\arraybackslash}X
>{\centering\arraybackslash}X
}
\toprule
\textbf{AB2} & \textbf{AM2} & \textbf{BDF2} & \textbf{Leap-Frog}
 \\
\midrule
$\displaystyle \frac{2\zeta^2 - 2\zeta}{3\zeta - 1}$ &
$\displaystyle \frac{12\zeta^2 - 12\zeta}{5\zeta^2 + 8\zeta - 1}$  &
$\displaystyle \frac{3\zeta^2 - 4\zeta + 1}{2\zeta^2}$  &$\displaystyle \frac{\zeta^2 - 1}{2\zeta}$ \\
\bottomrule
\end{tabularx}
\end{table}

An important consequence is that: \emph{if one solves the learned dynamical system, $dz/dt=\hat\lambda z$, with other conditions, a perturbation of the perfect ones, one may obtain trajectories containing the spurious roots with growing amplitude.}

\subsubsection*{Phase errors in systems learned from Leap-Frog.}
The Leap-Frog method exhibits markedly different behavior in how it encodes the phase of oscillatory dynamics. Suppose $|\theta|$ is small,  \( \operatorname{Im}(\hat\lambda h) \) preserves the sign of \( \sin(\theta)\approx\theta \).
This means that the Leap-Frog method will infer rotation directions that are consistent with the actual system.

In the Leap-Frog method, 
\[
\hat\lambda h = \frac{\zeta^2 - 1}{2\zeta}
\quad \Rightarrow \quad
\operatorname{Im}(\hat\lambda h) = \cosh(a) \sin\theta.
\]
Notably, when $a=0$, the method will infer a system without any phase error, while for \( a < 0 \), the rotational speed is reduced by the dissipation in the actual system.

Thus, Leap-Frog exhibits high phase fidelity but at the cost of stability, as it always includes one unstable root.

\review{Table~\ref{tab:two-step-LMM-roots-summary} summarizes the characteristic roots of the four two-step methods. Together with Figure~\ref{fig:two_step}, this shows that 
among the methods considered, only the Leap-Frog scheme guarantees that the learned 
dynamics remain bounded for any initial data, since both of its characteristic roots 
lie on the unit circle when the data are conservative. For the remaining methods, the co-existence of roots inside and outside the unit disc means that realizations 
starting from initial conditions different from those used in \review{learn}ing may diverge, 
even when the \review{learn}ing loss is small.} 
\begin{table}
\caption{Stable and unstable roots of the characteristic polynomials of two-step methods.}\label{tab:two-step-LMM-roots-summary}\medskip
\centering
\renewcommand{\arraystretch}{1.5}
\begin{tabular}{@{}lll@{}}
\toprule
\textbf{Method} & \textbf{Stable/unstable roots coexist}  & \textbf{Repeated roots} \\
 & ($|\zeta_1|\geq 1$~\text{and}~$|\zeta_2|\leq 1$) & ($\zeta_1=\zeta_2$)\\
\midrule

AB2 &
Dark Gray region in Figure~\ref{fig:AB2_zeta}& 
if $\lambda h = \dfrac{-2\pm 4\sqrt{2}i}{9}$, \ $|\zeta_{1,2}|<1$\\

AM2 &
Dark Gray region in Figure~\ref{fig:AM2_zeta}&
if $\lambda h = \dfrac{-6\pm 4\sqrt{3}i}{7}$, \ $|\zeta_{1,2}|>1$ \\

BDF2 &
Dark Gray region in Figure~\ref{fig:BDF2_zeta}&
if $\lambda h = -\dfrac{1}{2}$, \ $|\zeta_{1,2}|<1$ \\

Leap-Frog &
always, since $\zeta_1 \zeta_2 = -1$ &
if $\lambda h = \pm i$, \ $|\zeta_{1,2}|=1$ \\

\bottomrule
\end{tabular}
\end{table}

\review{Table~\ref{tab:lmm_two_step} and Figures~\ref{fig:two_step_real} 
numerically show that none can reliably preserve the 
qualitative structure of the true dynamics for all data types. The only exception is 
the Leap-Frog scheme, which correctly yields $\operatorname{Re}(\hat{\lambda}) = 0$ 
for conservative data and conditionally for dissipative systems.}
\begin{table}[h!]
\caption{Sign of $\operatorname{Re}(\hat\lambda h)$ for two-step methods, inferred from root $\zeta = e^{a + i\theta}$.}\medskip
\label{tab:lmm_two_step}
\centering
\renewcommand{\arraystretch}{1.5}
\begin{tabularx}{\textwidth}{@{}lll@{}}
\toprule
\textbf{Method} & \textbf{Data with damping ($a < 0$)} & \textbf{Pure oscillatory data ($a = 0$)} \\
\midrule

AB2 &
Gray region in Figure~\ref{fig:AB2_real}&
\correction{$\operatorname{Re}(\hat\lambda h) < 0$ for all $\zeta \in S^1$} \\
\hline

AM2 &
Gray region in Figure~\ref{fig:AM2_real} &
\correction{$\operatorname{Re}(\hat\lambda h) < 0$ for all $\zeta \in S^1$} \\

\hline

BDF2 &
Gray region in Figure~\ref{fig:BDF2_real}&
\correction{$\operatorname{Re}(\hat\lambda h) > 0$ for all $\zeta \in S^1$} \\

\hline
Leap-Frog &
$\operatorname{Re}(\hat\lambda h) > 0$ if $\cos(\theta) < 0$  &
$\operatorname{Re}(\hat\lambda h) = 0$ for all $\zeta \in S^1$ \\
 &
$\operatorname{Im}(\hat\lambda h) = \cosh(a) \sin\theta$  & small $|\theta|\implies$ small phase error
 \\
%\hline

%\hline\hline
\bottomrule
\end{tabularx}
\end{table}
\begin{figure}[!ht]
    \centering
    \begin{subfigure}{0.32\linewidth}
        \includegraphics[width=\linewidth]{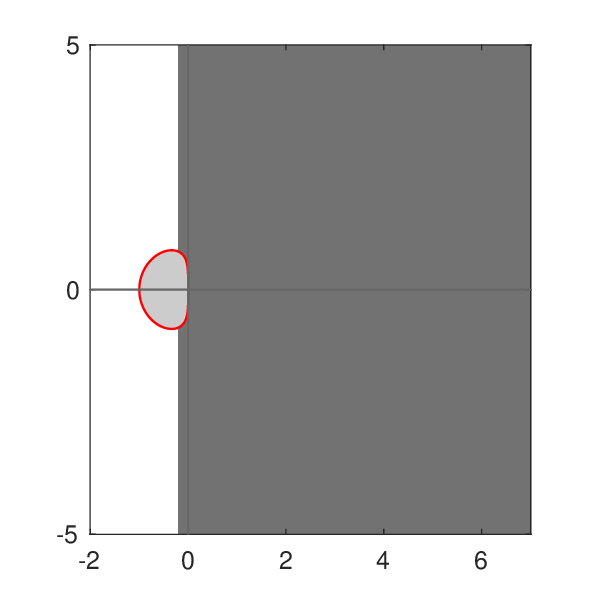}
    \caption{AB2}
    \label{fig:AB2_zeta}
    \end{subfigure}
    \begin{subfigure}{0.32\linewidth}
        \includegraphics[width=\linewidth]{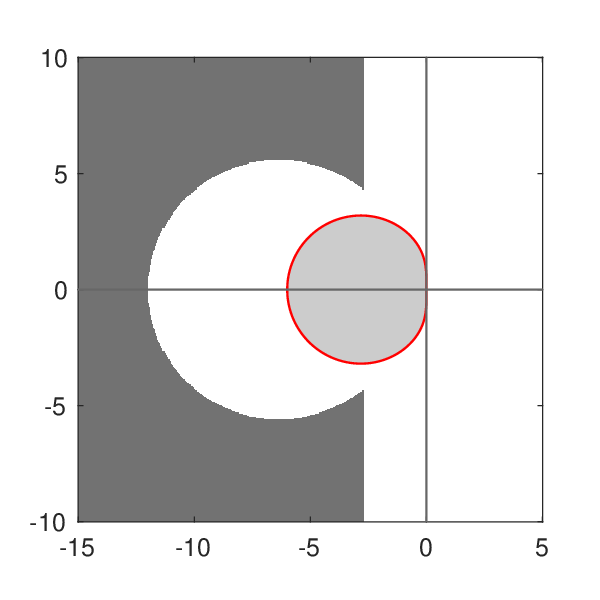}
    \caption{AM2}
    \label{fig:AM2_zeta}
    \end{subfigure}
    \begin{subfigure}{0.32\linewidth}
        \includegraphics[width=\linewidth]{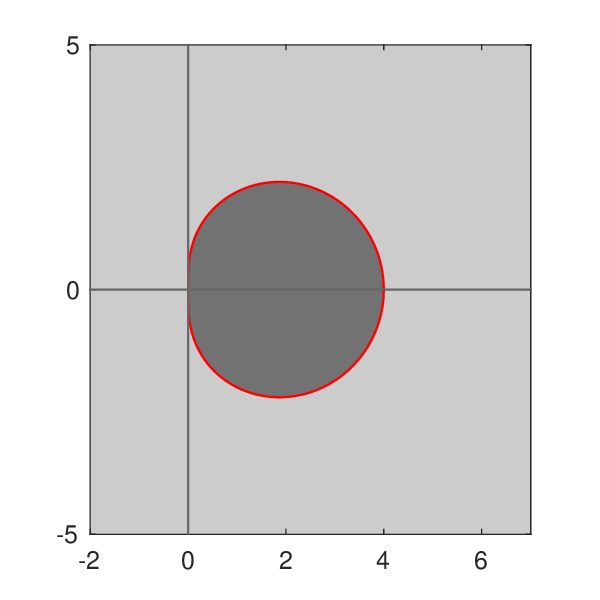}
    \caption{BDF2}
    \label{fig:BDF2_zeta}
    \end{subfigure}
    \caption{The dark gray region in each panel marks the region $\mathcal{R}_P$. The regions in light gray are the region of absolute stability $\mathcal{R}_A$ of each integrator, which is enclosed by the red curve. 
} \label{fig:two_step}\end{figure}

\begin{figure}[!ht]
    \centering
    \begin{subfigure}{0.32\linewidth}
        \includegraphics[width=\linewidth]{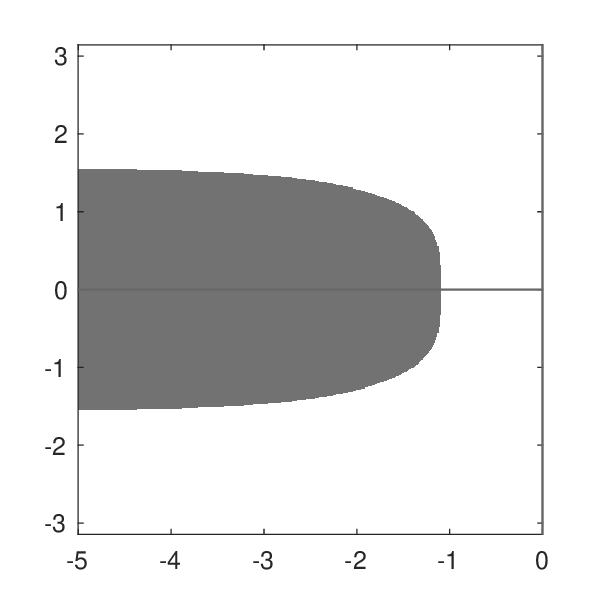}
    \caption{AB2}
    \label{fig:AB2_real}
    \end{subfigure}
    \begin{subfigure}{0.32\linewidth}
        \includegraphics[width=\linewidth]{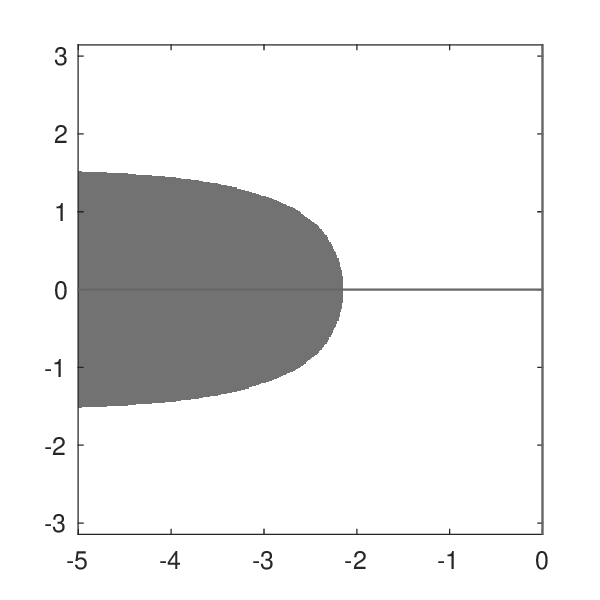}
    \caption{AM2}
    \label{fig:AM2_real}
    \end{subfigure}
    \begin{subfigure}{0.32\linewidth}
        \includegraphics[width=\linewidth]{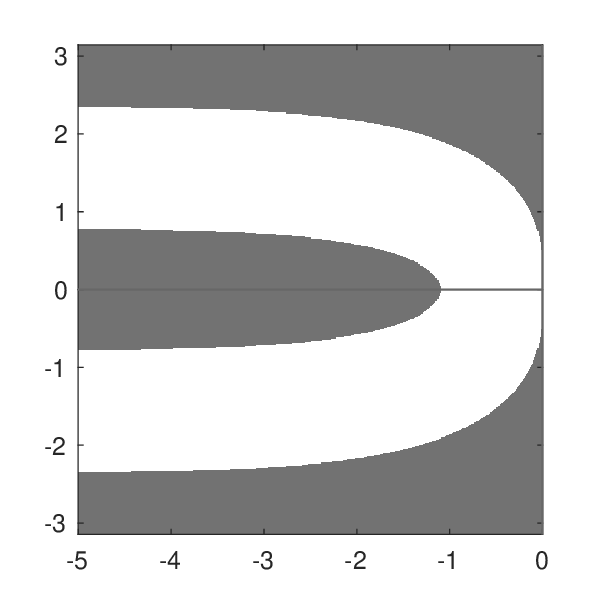}
    \caption{BDF2}
    \label{fig:BDF2_real}
    \end{subfigure}
    \caption{The dark gray region in each panel marks the set of complex numbers $z = a+i\theta$ where $\operatorname{Re}(\hat\lambda h) = \operatorname{Re}({\rho(e^z)}/{\kappa(e^z)})>0$. 
}\label{fig:two_step_real}\end{figure}

\begin{figure}[!ht]
    \centering
    \begin{subfigure}{0.32\linewidth}
        \includegraphics[width=\linewidth]{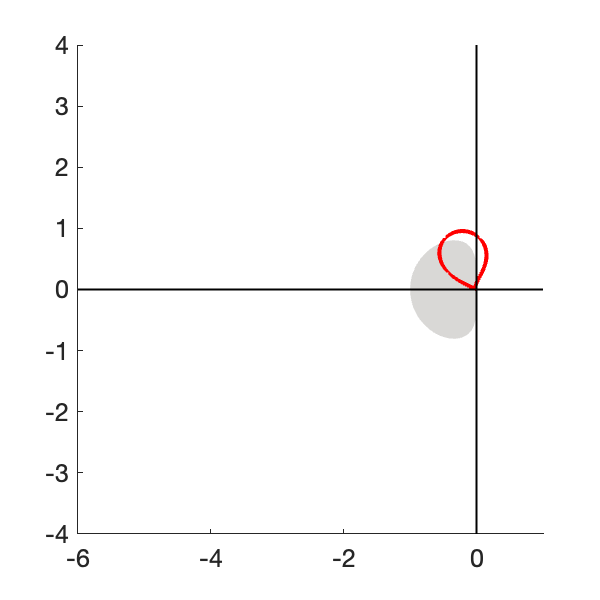}
    \caption{AB2}
    % \label{fig:AB2_obj}
    \end{subfigure}
    \begin{subfigure}{0.32\linewidth}
        \includegraphics[width=\linewidth]{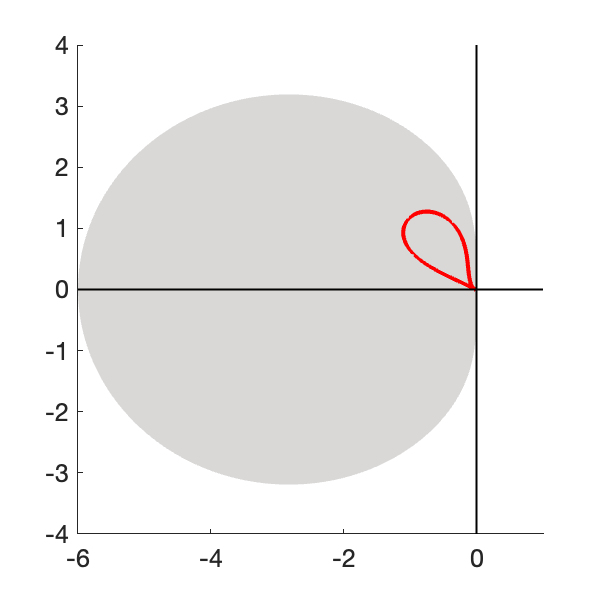}
    \caption{AM2}
    \end{subfigure}
    \begin{subfigure}{0.32\linewidth}
        \includegraphics[width=\linewidth]{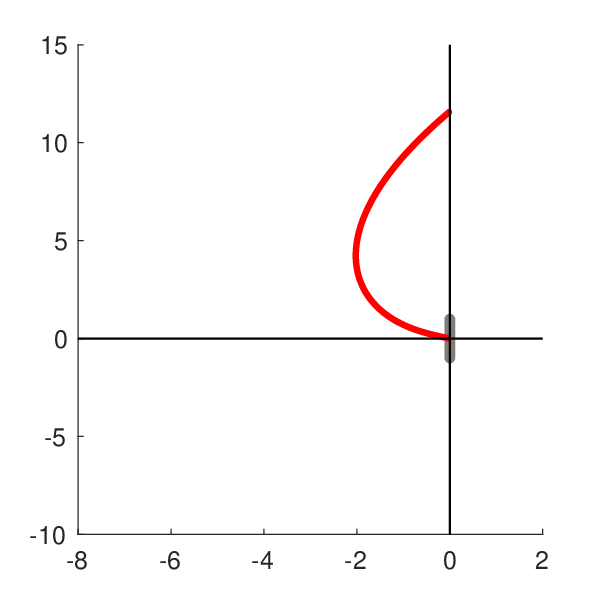}
    \caption{Leap-Frog}
    % \label{fig:AM2_obj}
    \end{subfigure}
    \caption{$\hat\lambda h$ versus the region of absolute stability. The gray region in each plot indicates the stability region of the selected method, while the red curve depicts the learning result, $\hat{\lambda} h = \rho(e^{\lambda h})/\kappa(e^{\lambda h})$ for $h\in(0, \pi/4)$, where $\lambda = -4+2i$.  } \label{fig:hat-lambda-vs-RA}
\end{figure}

\begin{figure} %[!ht]
\centering{
    \begin{subfigure}[t]{.3\textwidth}
         \centering
         \includegraphics[width=\linewidth]{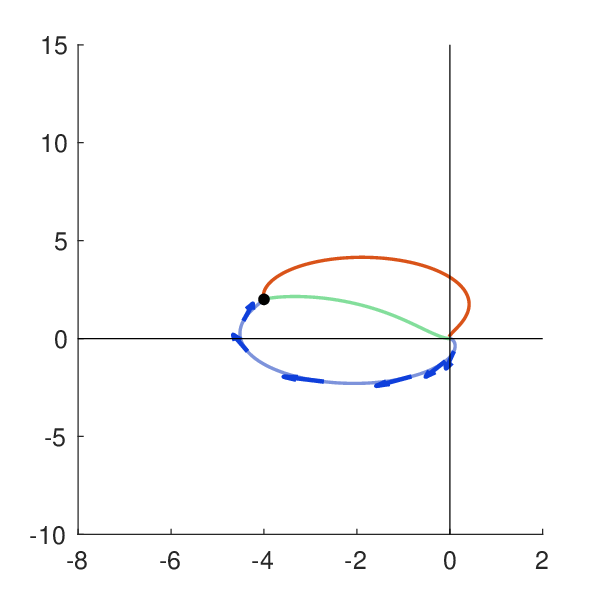}
         \caption{$\hat\lambda$ from ABk.}
         % \label{fig:enter-label}
    \end{subfigure}\hspace{1em}
    \begin{subfigure}[t]{.3\textwidth}
        \centering
        \includegraphics[width=\linewidth]{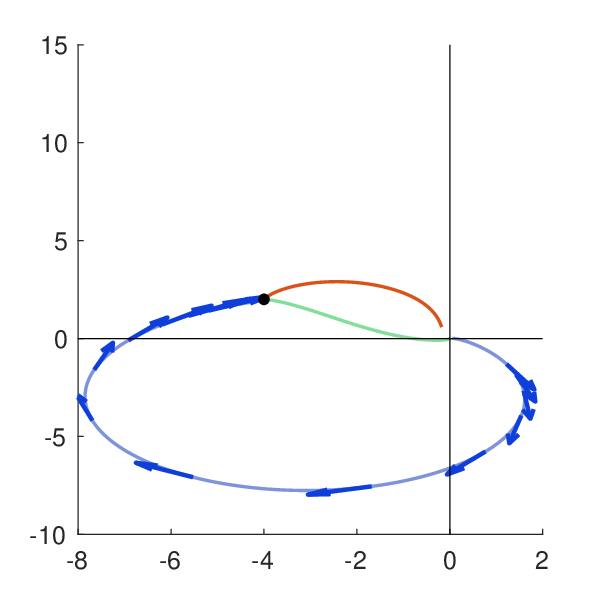}
        \caption{$\hat\lambda$ from AMk.}
        % \label{fig:enter-label}
    \end{subfigure}\hspace{1em}
    \begin{subfigure}[t]{.3\textwidth}
        \centering
        \includegraphics[width=\linewidth]{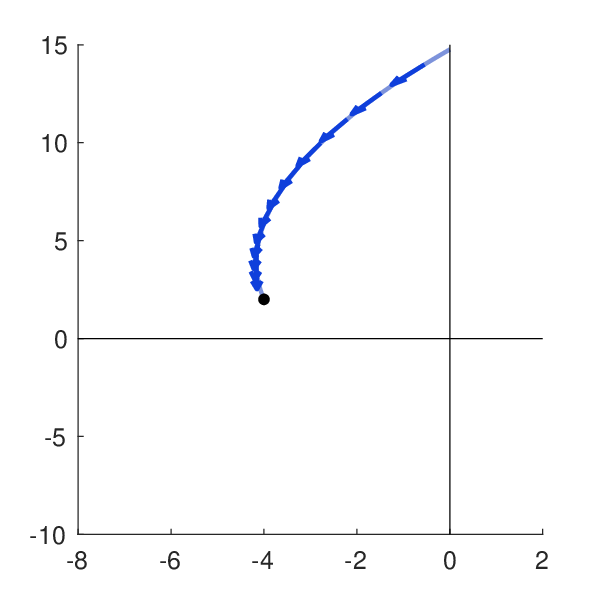}
        \caption{$\hat\lambda$ from Leap-Frog.}
        % \label{fig:enter-label}
    \end{subfigure}
    }
    \caption{  $\hat\lambda\equiv \hat\lambda(h) = h^{-1}\rho(e^{\lambda h})/ \kappa(e^{\lambda h})$
    for $h \in (0, \pi/4)$.
The actual value, $\lambda = -4 + 2i$, is indicated by a triangle in each plot. The arrows on selected curves show the trend of $\hat\lambda$ as  $h\rightarrow 0$ for the respective methods.
 The blue curves represent the ABk or AMk methods for $k = 2$, the green curves for $k = 3$, and the red curves for $k = 4$. The right plot corresponds to the Leap-Frog scheme.}\label{fig:AB-AM-dissipative} 
\end{figure} 

According to Lemma~\ref{lemma:lmm}, the learned dynamic resides on the boundary of each method’s stability region. In particular, the Leap-Frog scheme is absolutely stable in the interval $(-1,1)$ on the imaginary axis. Consequently, the learned dynamical system, $dz/dt = \hat\lambda z,$ will be conservative if
the data points come from a conservative dynamical system, and $0\le |\hat\lambda|< 1/h$, for any step size $h$. 
\emph{This implies that the frequency in the learned dynamical system has to be smaller than the actual frequency.}
% In contrast,  the stability regions of the Adams-family methods intersect the imaginary axis at a finite number of points; some even extend to the right half of the complex plane. 
% Consequently, the learned dynamics will typically exhibit dissipative behavior if the stability region stays on the left half-plane, and may even be expansive when the stability region overlaps the right half-plane. 
% Our analysis suggests that the Leap-Frog scheme is better suited for learning conservative dynamics within the LMM framework.
 
\subsection{Noisy data}
We study the dynamics inferred by the Leap–Frog and noisy observations. The point is that noise in data observed from a dissipative system may cause the learned discrete dynamical system to become unstable. The instability is induced by the existence of spurious roots of the corresponding characteristic polynomial.

Consider using the Leap–Frog to learn $\hat\lambda h$, the solution at $n$ steps can be written as
\[
  c_1\zeta_1^{\,n}+c_2\zeta_2^{\,n},
  \qquad
  \zeta_{1,2}
  :=\hat\lambda h\pm\sqrt{1+(\hat\lambda h)^{2}},
\]
with coefficients $c_1,c_2$ determined by the initial values
$z_0$ and $z_1$.

\begin{theorem}[\correction{Vanishing of the unstable-mode coefficient and the noiseless limit of $\hat\lambda h$}]
  Let $N$ be sufficiently large and let $\hat\lambda h$ be the global minimiser of
  \eqref{LMM-learning-problem-v2} obtained with the Leap–Frog scheme.
  Suppose the observations
  \[
    e^{\lambda n h}+\epsilon_n,\qquad n=1,\dots,N,
  \]
  are uniformly bounded and include zero-mean noise
  $\epsilon_n$ of small variance, where
  $\operatorname{Re}(\lambda)<0$ and $h$ is the sampling step.\\
  Assume that
  \[
      |\zeta_1|>1,\qquad|\zeta_2|\le1 .
  \]
  \correction{Then $c_1=0$. Moreover, in the noiseless limit,
  \[
      \hat\lambda h \;=\; \sinh(\lambda h),
  \]
  so, writing $\lambda h = a+i\theta$ with $a=\operatorname{Re}(\lambda h)<0$,
  \[
      \operatorname{Re}(\hat\lambda h) \;=\; \sinh(a)\cos\theta,
      \qquad
      \operatorname{Im}(\hat\lambda h) \;=\; \cosh(a)\sin\theta,
  \]
  which is negative precisely when $|\theta|<\pi/2$ and can be positive otherwise.}
\end{theorem}

\begin{proof}
    Assume that both coefficients are non-zero,
    $c_1\neq0$ and $c_2\neq0$.
    By the triangle inequality,
    \[
      \bigl|c_1\zeta_1^{\,n}+c_2\zeta_2^{\,n}- e^{\lambda n h}+\epsilon_n \bigr|
       \ge
      \Bigl|\,|c_1|\,|\zeta_1|^{\,n}
            -\bigl|c_2\zeta_2^{\,n}+e^{\lambda n h}\bigr|
            -|\epsilon_n|\,\Bigr|.
    \]
    Because $|c_2\zeta_2^{\,n}+e^{\lambda n h}|$ is uniformly bounded,
    $|\epsilon_n|$ is small, and $|\zeta_1|>1$, the right-hand side
    grows without bound as $n\to\infty$.
    Hence, the residual cannot be minimized globally, contradicting the optimality
    of $\hat\lambda h$.
    Therefore, we must have $c_1=0$.

    \correction{With $c_1=0$, the learned sequence reduces to $c_2\zeta_2^{\,n}$. In the noiseless limit $\epsilon_n\equiv 0$, matching $c_2\zeta_2^{\,n}=e^{\lambda n h}$ for all $n$ forces $\zeta_2=e^{\lambda h}$ and $c_2=1$. Since the Leap-Frog characteristic polynomial $\zeta^2-2(\hat\lambda h)\zeta-1=0$ has $\zeta_1\zeta_2=-1$, this determines $\zeta_1=-e^{-\lambda h}$, and hence
    \[
      \hat\lambda h \;=\; \frac{\zeta_1+\zeta_2}{2}
      \;=\; \frac{e^{\lambda h}-e^{-\lambda h}}{2}
      \;=\; \sinh(\lambda h).
    \]
    Writing $\lambda h=a+i\theta$ and separating real and imaginary parts yields the stated formulas for $\operatorname{Re}(\hat\lambda h)$ and $\operatorname{Im}(\hat\lambda h)$. Since $a<0$ implies $\sinh(a)<0$, the sign of $\operatorname{Re}(\hat\lambda h)$ is opposite to that of $\cos\theta$, so $\operatorname{Re}(\hat\lambda h)<0$ exactly when $|\theta|<\pi/2$.}
\end{proof}

\subsubsection*{An example of "hidden" spurious mode}
Due to the presence of noise in the data, the coefficient $c_1$ for the spurious root does not necessarily vanish. 
To illustrate this fact, we estimate $\hat\lambda h$ from synthetic observations $Z_n = Z_0e^{\lambda n h} + \epsilon_n$ generated from the linear system with $\lambda = -2 + i$ with the sampling step size $h = 0.1$ and the starting point $Z_0$; here $\epsilon_n$ is the noise with zero mean and variance $0.01$, \review{$n = 1\cdots N  = 10, 20, 50$. \\
That is, we get the learned dynamic from minimizing the following objective function:
\begin{equation}
\left\{
\begin{aligned}
        \min_{\xi\in\mathbb{C}}~
        \dfrac{1}{N}&\sum_{n=1}^N 
        | \sum_{j=1}^2 c_{j} \zeta^{n}_{j}(\xi) - Z_{n} |^2,~~~n=1,2,\cdots,N,\\
   & c_1 + c_2  = Z_0,~~~ c_1\zeta_1+c_1\zeta_2 = Z_1.
\end{aligned}
    \right.
\end{equation}
}
In Table ~\ref{tab:spurious_roots} we report the resulting $c_1$ and $c_2$ values for several choices of $N$.

\review{If the coefficient, $c_1$, linked to the spurious root is non-zero, the learned discrete dynamics can diverge from the actual system at later times. We integrate the learned dynamics from $N = 50$ using the same numerical scheme and the same $h$ up to $t = 10$ and show the results in Figure \ref{fig:diverge_imp}. In such a case, the spurious root eventually dominates, and the numerical solution 
eventually diverges.}

\begin{table}[ht]
\caption{Estimated coefficients $c_1$ and $c_2$ for different sample sizes $N$.}
\medskip
  \centering
  \begin{tabular}{crrrr}
    \toprule
    {$N$} & {$c_1$} & {$c_2$} & {$|\zeta_1|$} &{$|\zeta_2|$}\\ \midrule
    10 &  $-6.0912\times 10^{-3}$ & 1.0007 &   1.1602 & 0.8619 \\
    20 &  $-1.6658\times 10^{-3}$ & 0.9962    & 1.1602 &0.8732\\
    50 &  $-7.8910\times 10^{-6}$ & 0.9946  & 1.1602 &0.8732\\ \bottomrule
  \end{tabular}
  
  \label{tab:spurious_roots}
\end{table}

\begin{figure}[!ht]
  \centering
  \begin{subfigure}[t]{0.3\linewidth}
    \includegraphics[width=\linewidth]{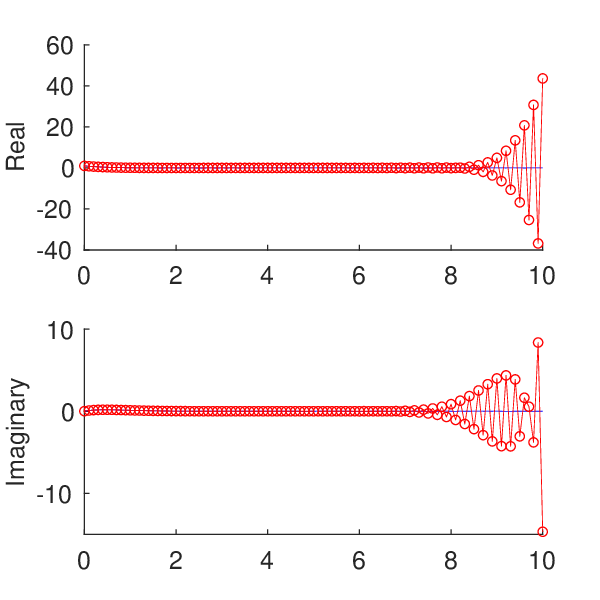}
    \caption{The values generated by the learned discrete process: $c_1\zeta_1^n+c_2\zeta_2^n$.}
    \label{fig:diverge_imp:a}
  \end{subfigure}
  \hspace{2em}
  \begin{subfigure}[t]{0.3\linewidth}
    \includegraphics[width=\linewidth]{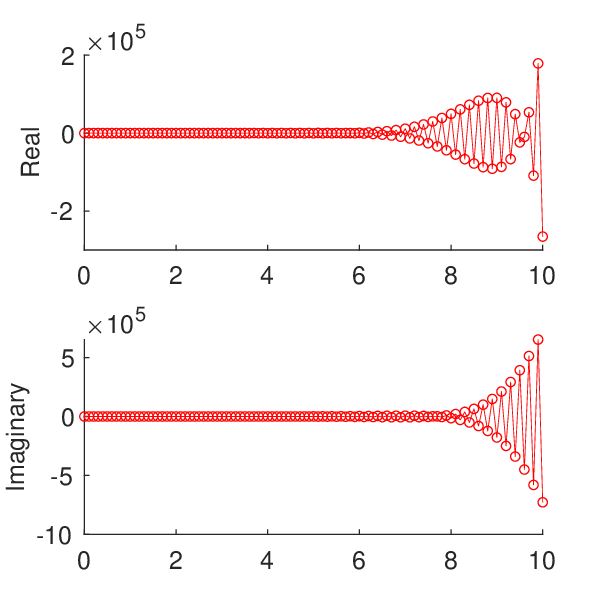}
    \caption{The evolution of $\zeta_1^n$}
    \label{fig:diverge_imp:b}
  \end{subfigure}
  \caption{The $x$-axis represents the physical time. The blue curves represent the observed data. On the left, the red curves depict the numerical solution of the differential equation $z^\prime= \hat\lambda  z(t)$ obtained using the Leap-Frog scheme with step size $h = 0.1$ up to time $t = 10$. On the right, the plots display the evolution of $\zeta_1^n$.
}
  \label{fig:diverge_imp}
\end{figure}

\section{\revmark{From linear to nonlinear dynamics}}\label{sec:nonlinear-learning}

\revmark{Sections~\ref{sec:one-step} and~3 analyze the scalar model $z'=\lambda z$. To connect that analysis to the nonlinear experiments in Section~\ref{sec:numerical-examples}, we adopt the planar damped Hamiltonian system
\begin{equation}\label{eq:damped-ham}
\dot q = p,\qquad \dot p = -\nabla V(q)-\gamma p,
\qquad \gamma\ge 0,
\end{equation}
as a model problem. When $\gamma=0$, the flow is conservative with Hamiltonian $H(q,p)=\tfrac12|p|^2+V(q)$; when $\gamma>0$, oscillatory structure persists but the flow is dissipative. Both regimes covered by the scalar theory are thus captured by the complex eigenvalues of the local linearization, which makes the damped Hamiltonian a natural vehicle for transferring the artifacts identified in Sections~\ref{sec:one-step} and~3 to learning nonlinear dynamical systems from trajectory data.}

\revmark{Suppose now that the learned model is agnostic to Hamiltonian structure and is given by a general planar vector field
\[
\dot x=\mathbf g(x),\qquad x=(u,v),\qquad \mathbf g=(g_1,g_2).
\]
Given an observed trajectory $\{x_n\}_{n=0}^N$, one fits $\mathbf g$ through a chosen one-step map
\[
x_{n+1}=S_h(x_n;\mathbf g).
\]
To understand the local qualitative behavior of the learned dynamics, fix an observed state $x_n$ and linearize the \emph{learned} vector field there:
\[
\dot w=\hat A_n w,\qquad \hat A_n:=D\mathbf g(x_n),
\qquad
\hat\lambda_j:=\text{eigenvalues of }\hat A_n.
\]
The local discrete dynamics is then governed by the Jacobian of the one-step map,
\[
DS_h(x_n;\mathbf g),
\]
which, to leading order as $h\to 0$, is obtained by applying the chosen numerical integrator to the linearized system $\dot w=\hat A_n w$. If $R$ denotes the stability function of the method (the same object denoted $p$ in Table~\ref{tab:characteristic-poly}), then the local eigenvalues of $DS_h(x_n;\mathbf g)$ satisfy the asymptotic identity
\begin{equation}\label{eq:nl-local-eig}
\mu_j=R(\hat\lambda_j h)+\mathcal{O}(h)\qquad\text{as }h\to 0.
\end{equation}
In parallel, the Jacobian of the \emph{true} flow over one step has eigenvalues $e^{\lambda_j h}+\mathcal{O}(h)$ as $h\to 0$, where $\lambda_j$ denote the true local eigenvalues (eigenvalues of $D\mathbf f(x_n)$ for the ground-truth vector field $\mathbf f$). Matching the discrete one-step evolution to the observed data therefore, couples the two spectra through the asymptotic relation
\begin{equation}\label{eq:nl-master}
R(\hat\lambda_j h)=e^{\lambda_j h}+\mathcal{O}(h)\qquad\text{as }h\to 0,
\end{equation}
which is the scalar identity of Section~\ref{sec:one-step} applied pointwise along the trajectory, with the $\mathcal{O}(h)$ correction capturing the nonlinear Taylor terms of $\mathbf g$ and $\mathbf f$ near $x_n$.}

\revmark{For the damped Hamiltonian model \eqref{eq:damped-ham}, the relevant local linearization is
\[
A=
\begin{pmatrix}
0 & 1\\
-\kappa & -\gamma
\end{pmatrix},
\]
where $\kappa$ is the local curvature of the potential and $\gamma$ is the damping coefficient. Its eigenvalues are
\[
\lambda_\pm=\frac{-\gamma\pm\sqrt{\gamma^2-4\kappa}}{2}.
\]
In the underdamped regime $\gamma^2<4\kappa$, this becomes
\begin{equation}\label{eq:nl-underdamped-eig}
\lambda_\pm=-\frac{\gamma}{2}\pm i\omega,
\qquad
\omega=\frac{\sqrt{4\kappa-\gamma^2}}{2}.
\end{equation}
Thus, the correct local behavior is oscillation with decay rate $e^{-\gamma h/2}$. In the conservative limit $\gamma=0$, these eigenvalues reduce to the purely imaginary pair
\[
\lambda_\pm=\pm i\sqrt{\kappa},
\]
so the correct local behavior is neutral oscillation.}

{\color{black}
Consider first the conservative limit $\gamma=0$, for which the true local eigenvalues are purely imaginary, $\lambda_\pm=\pm i\sqrt{\kappa}$, and $|e^{\lambda_\pm h}|=1$. Equation~\eqref{eq:nl-master} then forces $|R(\hat\lambda_\pm h)|=1+\mathcal{O}(h)$ as $h\to 0$, placing the learned $\hat\lambda_\pm h$ on the stability boundary $\{|R(z)|=1\}$ to leading order. Applying Corollary~\ref{cor:unified_cons} (with $\operatorname{Re}\lambda=0$) to each of the three integrators then gives, to leading order in $h$:
\begin{itemize}
\item \textbf{Explicit Euler} (stability boundary $|z+1|=1$, lying in  $\mathbb{C}\cap\mathbb{R}^-$): $\operatorname{Re}\hat\lambda_\pm<0$. The learned local dynamics \emph{contracts} toward $x_n$, artificially damping a truly neutral mode.

\item \textbf{Implicit Euler} (stability boundary $|z-1|=1$, lying in $\mathbb{C}\cap\mathbb{R}^+$): $\operatorname{Re}\hat\lambda_\pm>0$. The learned local dynamics \emph{expands away} from $x_n$; this is the mechanism behind the expansive Lotka--Volterra trajectories reported in Section~\ref{sec:numerical-examples} with implicit Euler.

\item \textbf{Implicit midpoint} (stability boundary coinciding with the imaginary axis): $\operatorname{Re}\hat\lambda_\pm=0$. The learned local dynamics preserves neutral oscillation to leading order, with a phase error of order $h^2$ and no spurious contraction or expansion.
\end{itemize}
}

\revmark{In the underdamped regime $\gamma>0$ with $\lambda_\pm=-\gamma/2\pm i\omega$, the data satisfy $|e^{\lambda_\pm h}|=e^{-\gamma h/2}<1$, so to leading order the learned $\hat\lambda_\pm h$ lies on the sub-level set $\{|R(z)|=e^{-\gamma h/2}\}$ inside the stability region. The signs of $\operatorname{Re}\hat\lambda_\pm$ for each of the three integrators follow from Corollary~\ref{cor:unified_cons} (with $\operatorname{Re}\lambda<0$); to see the magnitude of the bias introduced by each stability-region geometry, we invert \eqref{eq:nl-master} and Taylor-expand in $h$:
\[
\begin{aligned}
\text{Explicit Euler:}\quad
&\operatorname{Re}\hat\lambda_\pm=-\tfrac{\gamma}{2}-\tfrac{h}{2}\correction{\bigl(\omega^{2}-\tfrac{\gamma^{2}}{4}\bigr)}+\mathcal{O}(h^2),\\
\text{Implicit Euler:}\quad
&\operatorname{Re}\hat\lambda_\pm=-\tfrac{\gamma}{2}+\tfrac{h}{2}\correction{\bigl(\omega^{2}-\tfrac{\gamma^{2}}{4}\bigr)}+\mathcal{O}(h^2),\\
\text{Implicit midpoint:}\quad
&\hat\lambda_\pm=\lambda_\pm-\tfrac{h^2}{12}\lambda_\pm^{\,3}+\mathcal{O}(\correction{h^4}).
\end{aligned}
\]
The explicit-Euler learned eigenvalue is always contractive at leading order: the sign of the learned dissipation matches the true sign, but the magnitude is biased toward \emph{over}-damping by the $\mathcal{O}(h\omega^2)$ term coming from the stability-region geometry. The implicit-Euler learned eigenvalue is not guaranteed to have the correct sign: whenever \correction{$h(\omega^{2}-\gamma^{2}/4)>\gamma$}, the leading-order $\operatorname{Re}\hat\lambda_\pm$ becomes positive, producing an expansive learned local dynamics even when the truth is dissipative---making quantitative the ``either expansive or dissipative'' behavior noted in Corollary~\ref{cor:unified_cons}. The implicit-midpoint learned eigenvalue tracks the true eigenvalue to $\mathcal{O}(h^2)$ in magnitude and exactly in sign to leading order, for every $(\gamma,\omega)$.}

\revmark{The prescriptive conclusion of Section~\ref{sec:nonlinear-learning}, then, turns on a geometric property: where each integrator's $|R|=1$ level set sits in the complex plane. Among the three, implicit midpoint is the unique method whose stability boundary coincides with the imaginary axis. In consequence, the learned local spectrum sits on the correct side of the imaginary axis in both the conservative and the weakly dissipative regimes, so a nonlinear learned vector field fit under the implicit midpoint rule inherits the correct qualitative local behavior---neutral oscillation where the truth is conservative, contraction where the truth is dissipative---while explicit Euler biases the learned spectrum into the left half plane and implicit Euler biases it into the right, regardless of the true local dissipation.}

\section{Numerical experiments}\label{sec:numerical-examples}
This section presents some further numerical studies to verify and further illustrate our findings on non-negligible numerical effects that lead to
mistakenly inferring unstable systems, destruction of orbits, or limiting cycles, and oscillatory systems with large nonlinear phase errors.
\review{Although the theoretical analysis in Sections~2 and~3 is developed for the  scalar linear system $z' = \lambda z$, its conclusions extend locally to
nonlinear systems through linearization. Specifically, along any sampled
trajectory $\gamma(t)$, the local behavior of the learned vector field $g_h$
is governed by the Jacobian $J_{g_h}(\gamma)$. By examining the eigenvalues
of this Jacobian, the scalar theory applies locally at each sampled point along the
trajectory, providing the key link between the linear analysis and the
nonlinear examples presented below.} 

Some of the examples below compare the Implicit Midpoint Rule and the Implicit Trapezoidal Rule; these two schemes coincide on the scalar linear problem (where they share the characteristic polynomial recorded in Table~\ref{tab:characteristic-poly}) but differ on nonlinear systems. For a nonlinear autonomous system $y^\prime = f(y)$ their generic forms are
\begin{equation}
  y_{n+1} = y_n + h\,
  f(\frac{y_n + y_{n+1}}{2}),
  \label{eq:implicit_midpoint}
\end{equation}
\begin{equation}
  y_{n+1} = y_n + \frac{h}{2}\bigl(f(y_n) + f(y_{n+1})\bigr),
  \label{eq:implicit_trapezoidal}
\end{equation}
for the Implicit Midpoint Rule and the Implicit Trapezoidal Rule, respectively.

\subsection{Recovering coefficients in a convection-diffusion equation}\label{sec:convection-diffusion-ex}

This example illustrates how numerical artifacts can affect coefficient recovery.  

In this example, we consider the recovery of the coefficients $a$ and $\epsilon$ in the following convection-diffusion equation with the periodic boundary condition,
\begin{equation}\label{eq:convection-diffusion-eqn}
\begin{aligned}
        &&u_t = a u_x+\epsilon u_{xx},~~x\in[0, 1), t\in\mathbb{R}^+,\\
    &&u(x, t=0) = u_0(x),
\end{aligned}
\end{equation}
with $a=2$ and $\epsilon=0.01$.

We consider the spectral method and use data $u_j^n$ on a Cartesian
grid, with $x_j=j\Delta x$, $j=0,1,2,\cdots, 256$ , and $t_n = nh$, for $n=1,2,\cdots, 5\times10^4$. 
The data are computed by the Matlab's \texttt{ODE45} with $\texttt{AbsTol} = 10^{-12}$ and $\texttt{RelTol} = 10^{-12}$.

We employ a spectral discretization is space with grid points $x_j=j\Delta x$, $j=0,\dots,256$, and time steps $t_n=nh$ for $n=1,\dots,5\times10^4$.
In the Fourier domain, 
\eqref{eq:convection-diffusion-eqn} becomes 
\begin{equation*}
    \frac{d}{dt} \hat u(k,t) = \underbrace{(i 2\pi k a- (2\pi k)^2 \epsilon)}_{\lambda_k} \hat u(k,t). 
\end{equation*}
For any candidate system, the time integration is computed by Explicit Euler, Implicit Euler, or the Implicit Midpoint rule with a fixed stepsize.
The reference data is computed by the same spectral discretization in space and MATLAB’s adaptive \texttt{ODE45} solver in time, with tolerances $\texttt{AbsTol}=\texttt{RelTol}=10^{-12}$.

Let $\hat\lambda_k$ be the recovered coefficient, and set $\hat{a} = \text{Im}(\hat\lambda )/2\pi k$ and $\hat{\epsilon}= \text{Re}(\hat\lambda) /(2\pi k)^2$. 
Table~\ref{tab:combined_convection_diffusion} compares the results obtained using the Explicit Euler, Implicit Euler, and Implicit Midpoint Rule for two initial conditions:
\[
u_0(x)=\cos(2\pi kx),\quad k=1,~10.
\]
This experiment supports Theorem~\ref{thm:euler_stab} and Corollary~\ref{cor:unified_cons}.  
In particular, the Implicit Euler scheme, which is stable in the right half-plane, can produce a diffusion coefficient with the incorrect sign.  
Moreover, Table~\ref{tab:combined_convection_diffusion} shows that $\hat{a}<a$ for the Explicit Euler method, while $\hat{a}>a$ for the Implicit Euler method, consistent with the analysis in Section~\ref{sec:one-step}.

%% I finished the paragraph on p.33
\begin{table}[htbp]
\centering
\caption{Learned coefficients $\hat a$ and $\hat\epsilon$ for 
\eqref{eq:convection-diffusion-eqn}, true values \(a=2\), \(\epsilon=0.01\).}
\label{tab:combined_convection_diffusion}
\smallskip
\begin{tabular}{@{}l 
                c 
                S[table-format=1.4] S[table-format=1.4] 
                S[table-format=1.4] S[table-format=1.4]@{}}
\toprule
\multirow{2}{*}{\bfseries Method} 
  & \multirow{2}{*}{\boldmath\(h\)} 
  & \multicolumn{2}{c}{\bfseries \(k=1\)} 
  & \multicolumn{2}{c}{\bfseries \(k=10\)} \\
\cmidrule(lr){3-4} \cmidrule(lr){5-6}
  &  
  & {\(\hat a\)} & {\(\hat\epsilon\)} 
  & {\(\hat a\)} & {\(\hat\epsilon\)} \\
\midrule
\multirow{2}{*}{Explicit Euler} 
  & \(10^{-2}\) & 1.9869 & 0.0299 & 1.0281 & 0.0199 \\
  & \(10^{-3}\) & 1.9992 & 0.0120 & 1.9181 & 0.0117 \\
\midrule
\multirow{2}{*}{Implicit Euler} 
  & \(10^{-2}\) & 2.0026 & {\bfseries -0.0100} & 2.2552 & {\bfseries -0.0133} \\
  & \(10^{-3}\) & 2.0007 & 0.0080         & 2.0748 & 0.0081         \\
\midrule
\multirow{2}{*}{Implicit Midpoint} 
  & \(10^{-2}\) & 2.0026 & 0.0100 & 2.1765 & 0.0146 \\
  & \(10^{-3}\) & 2.0000 & 0.0100 & 2.0018 & 0.0100 \\
\bottomrule
\end{tabular}
\end{table}

\begin{figure} %[!htbp]
    \centering
\begin{subfigure}[t]{0.3\textwidth}
    \centering
    \includegraphics[width=\linewidth]{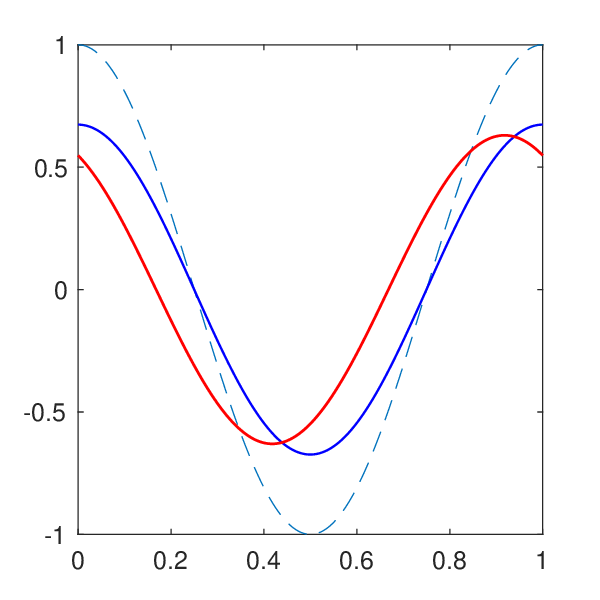}
    \caption{Explicit Euler}
\end{subfigure}
\begin{subfigure}[t]{0.3\textwidth}
    \centering
    \includegraphics[width=\linewidth]{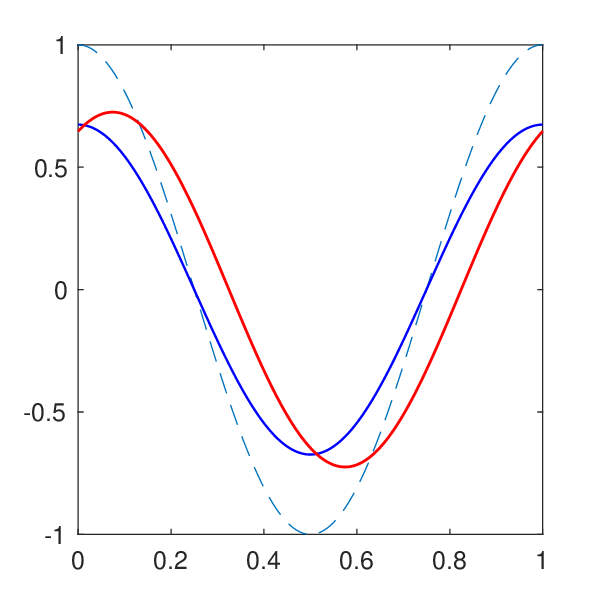}
    \caption{Implicit Euler}
\end{subfigure}
\begin{subfigure}[t]{0.3\textwidth}
    \centering
    \includegraphics[width=\linewidth]{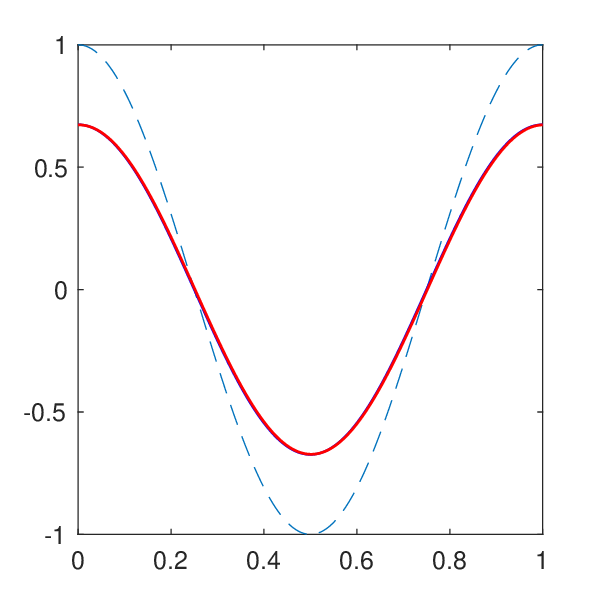}
    \caption{Implicit Midpoint}
\end{subfigure}
\caption{Profiles of the learned convection–diffusion solution at $T=1$. The blue dashed line denotes the initial condition $\cos(2\pi x)$, the solid blue line is the true solution, and the red line represents the learned solution. 
}\label{fig:convection_phase}
\end{figure}

The imaginary part of $\hat\lambda$ is directly associated with the propagation speed $a$ in the equation. 
Figure~\ref{fig:convection_phase} provides a visualization of the discrepancy in the propagation speed between the learned and true dynamics. The plots are the solutions of the equation, defined by the 
coefficients learned with $k = 10$ and $h = 10^{-3}$, computed by different time integrators, starting from 
 the initial condition $u_0 = \cos(2\pi x)$.
Figure~\ref{fig:convection_phase} supports the conclusion in Table~\ref{tab:im-lambdah-one-step-trimmed}, showing that the Explicit Euler method learns a dynamic with a slower propagation speed, whereas the Implicit Euler method, by contrast, learns a dynamic with an advanced propagation speed.

\subsection{Neural ODE models for the Lotka-Volterra system}
We study the learning of orbital trajectories in a nonlinear predator-prey problem, using a common Neural ODE approach.

The Lotka-Volterra dynamic describes the interaction between two species,
\begin{equation*}
    \left\{
    \begin{array}{rcl}
        \dfrac{dx}{dt} &=& x\left(1-\dfrac{y}{\nu}\right),\\
        \\
        \dfrac{dy}{dt} &=& \dfrac{y}{\nu}\left(1-x\right),\\
    \end{array}
    \right.
\end{equation*}
where $\nu>0$. 
We will construct neural networks from data to approximate the vector field of this system (the right-hand side of the above equation). 

{We set $\nu = 1$ in this study, and generate trajectories by integrating the dynamic system using MATLAB's \texttt{ODE45} solver and setting \texttt{AbsTol} and \texttt{RelTol} to $10^{-12}$. We sample trajectory snapshots at $t_n=nh$ with $h = 10^{-1}, 10^{-2}$, and $0\le t_n\le 12.$ }
{That is, dataset $\mathcal{D}_h$ is sampled over the time interval $[0, 12]$ with timesteps $h = 10^{-1}$ and $10^{-2}$. The trajectories of the learned dynamics are generated using the 4th-order Runge-Kutta method over the time interval $[0, 40]$ with $\Delta t = 10^{-4}$.}

\begin{figure} %[!htbp]
    \centering
\begin{subfigure}[t]{0.3\textwidth}
    \centering
    \includegraphics[width=\linewidth]{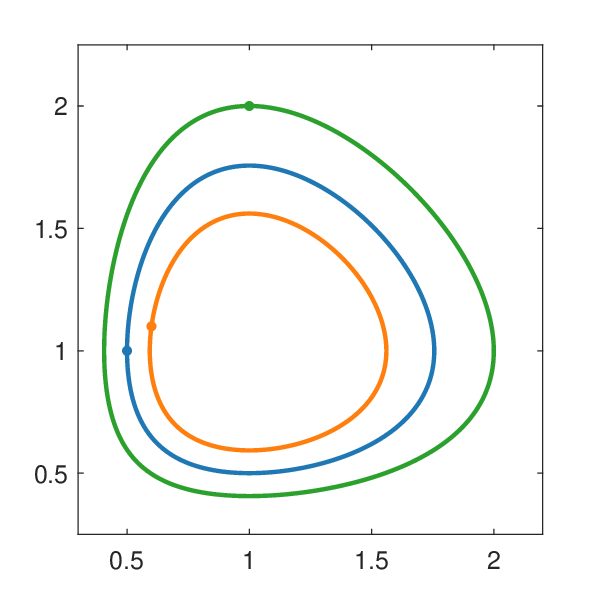}
    % \caption{Three trajectories used for training, Initial points are marked with solid circles.}
\end{subfigure}
\begin{subfigure}[t]{0.3\textwidth}
    \centering
    \includegraphics[width=\linewidth]{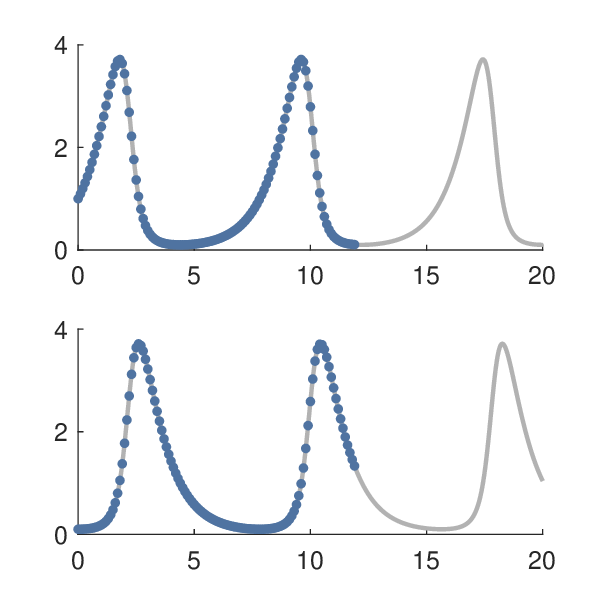}
    \caption*{$h = 0.1$}
\end{subfigure}
\begin{subfigure}[t]{0.3\textwidth}
    \centering
    \includegraphics[width=\linewidth]{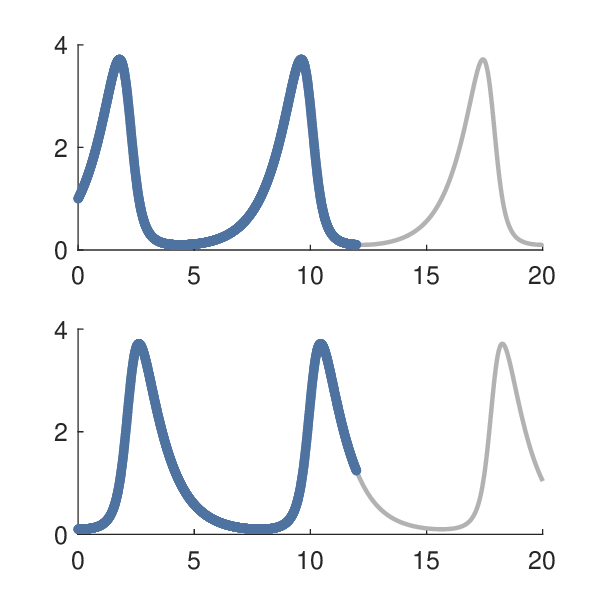}
    \caption*{$h = 0.01$}% \caption{$h = 0.1$; the vertical axis shows elapsed time along the blue trajectory in (a)}
\end{subfigure}
\caption{
Three Lotka–Volterra orbits are displayed. Filled circles mark samples from the blue orbit. In the time-series panels, the horizontal axis is time; the upper panel plots the $x$-information and the lower panel plots the $y$-information of the sampled points.}
    \label{fig:LV1}
\end{figure}

The neural networks are simple multilayer perceptrons with three hidden layers, each with 100 nodes, and use the ELU activation function. We stop the \review{learning} after the mean squared error between the \review{predictions and observations} reaches $10^{-4}$. 
When applied to the system linearized around an orbit, the theory developed in the previous sections can be used to understand the local behavior of the nearby trajectories:
\begin{itemize}
    \item The Implicit Euler may lead to expansive systems, a consequence corresponding to the possibility that $\operatorname{Re}(\hat\lambda)>0$;
    \item The Implicit Trapezoidal Rule and the Implicit Midpoint Rule can better capture the nonlinear conservative nature of the dynamical system; a consequence corresponding to each method's stability region, including the imaginary axis but not the right half plane.
    \item The observed orbits may become a limit cycle for the vector field learned by either the Implicit Trapezoidal Rule or the Implicit Midpoint Rule.
\end{itemize}
See the top row in Figure~\ref{fig:LV_1e_1_results}.
Additionally, the second row shows trajectories originating from various initial data points. Although the Implicit Trapezoidal Rule and the Implicit Midpoint Rule both more effectively capture the conservative property of the given data, the other trajectories, e.g., the orange and green trajectories, do not generate a close orbit. The orbit containing the observation snapshots appears to be a limit cycle of the learned vector field. Observe that both the orange and the green trajectories spiral towards the purple.

Figure~\ref{fig:LV_1e_1_results} presents the learned dynamics using data sampled from $h = 0.1$. Even though the data points "cover" a wider region, the undesirable numerical artifacts from the Implicit Euler method persist.

\begin{figure} %[!htbp]
    \centering
% ── Row 1 : initial data **in** the training set ──────────────────────────
    \begin{subfigure}[t]{\textwidth}
        \centering
        \begin{subfigure}[t]{0.24\textwidth}
            \includegraphics[width=\linewidth]{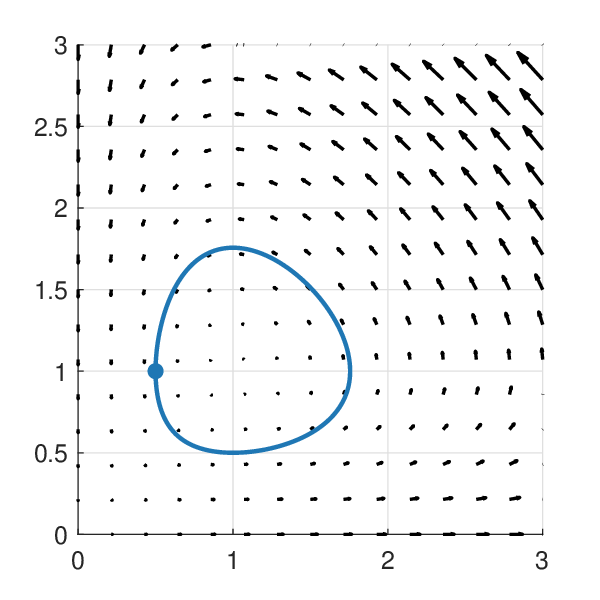}
            \caption{Ground truth}
        \end{subfigure}
        \begin{subfigure}[t]{0.24\textwidth}
            \includegraphics[width=\linewidth]{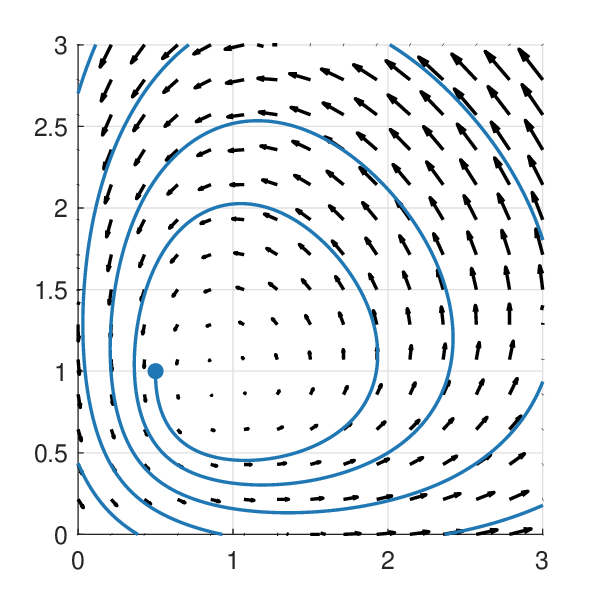}
            \caption{Implicit Euler}
        \end{subfigure}
        \begin{subfigure}[t]{0.24\textwidth}
            \includegraphics[width=\linewidth]{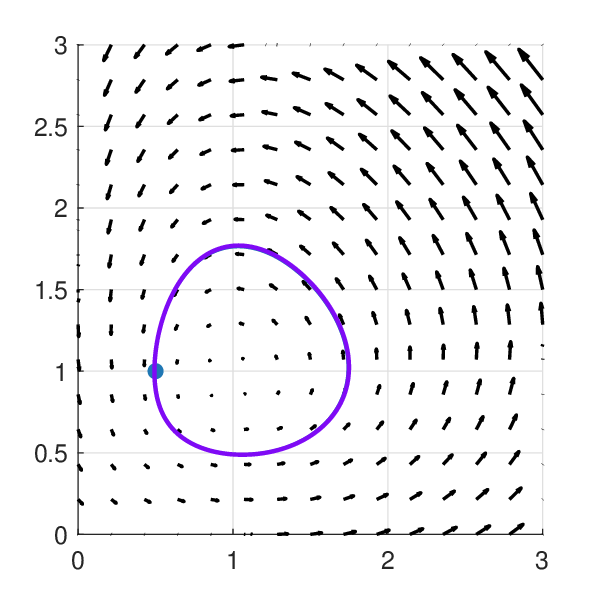}
            \caption{Implicit trapezoidal}
        \end{subfigure}
        \begin{subfigure}[t]{0.24\textwidth}
            \includegraphics[width=\linewidth]{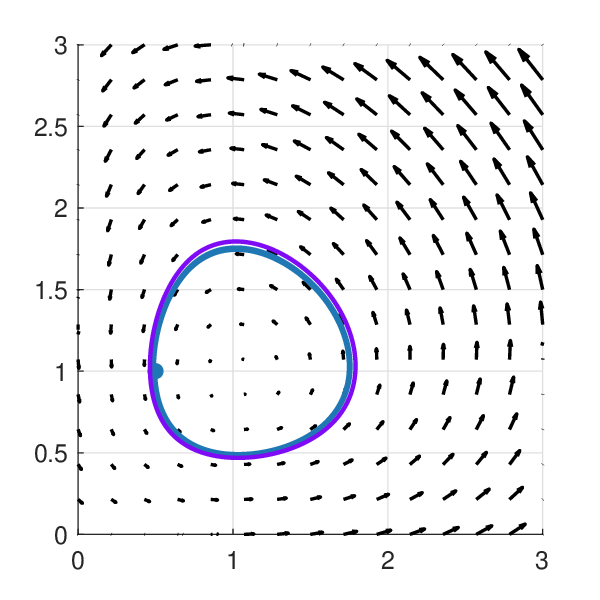}
            \caption{Implicit midpoint}
        \end{subfigure}

    \end{subfigure}

    \vspace{1.5em} % vertical gap between rows

% ── Row 2 : initial data **outside** the training set ─────────────────────
    \begin{subfigure}[t]{\textwidth}
        \centering
        \begin{subfigure}[t]{0.24\textwidth}
            \includegraphics[width=\linewidth]{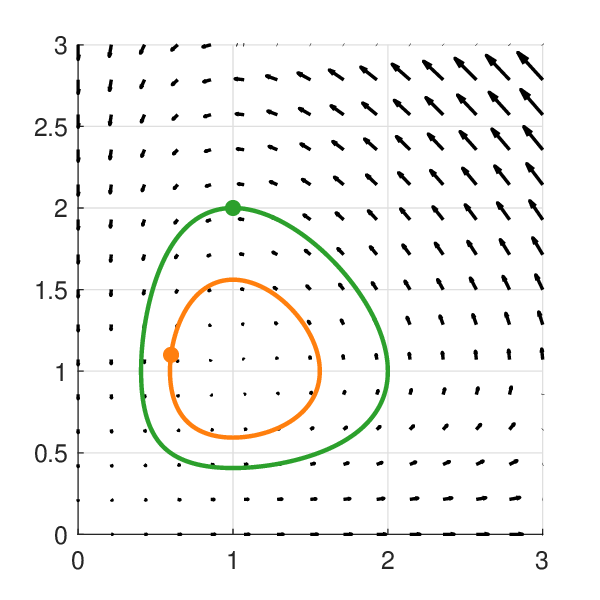}
            \caption{Ground truth}
        \end{subfigure}
        \begin{subfigure}[t]{0.24\textwidth}
            \includegraphics[width=\linewidth]{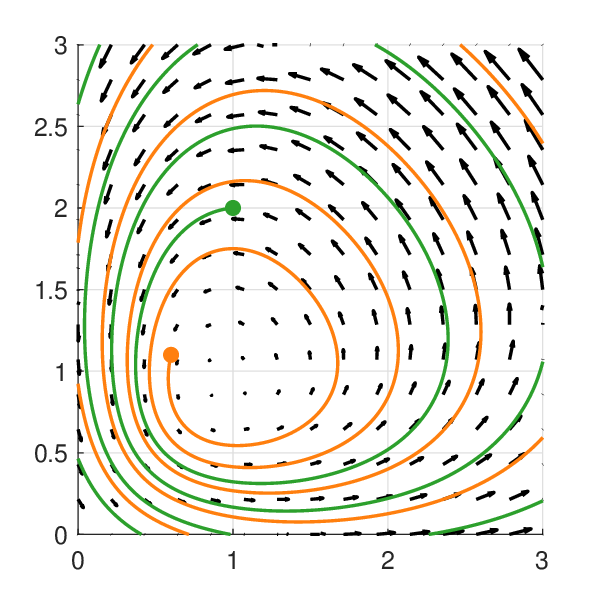}
            \caption{Implicit Euler}
        \end{subfigure}
        \begin{subfigure}[t]{0.24\textwidth}
            \includegraphics[width=\linewidth]{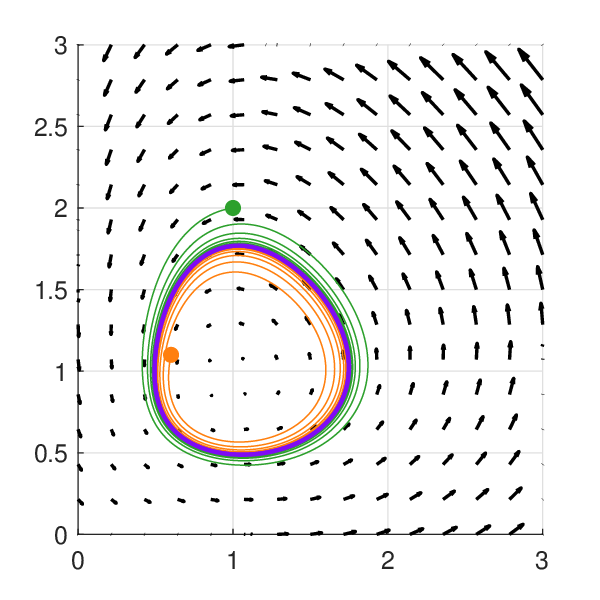}
            \caption{Implicit trapezoidal}
        \end{subfigure}
        \begin{subfigure}[t]{0.24\textwidth}
            \includegraphics[width=\linewidth]{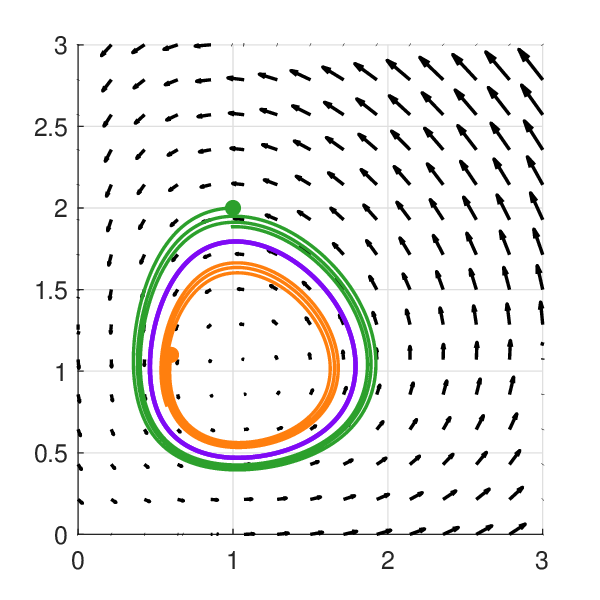}
            \caption{Implicit midpoint}
        \end{subfigure}
    \end{subfigure}

    \caption{Learning results with timestep $h = 0.1$ using the Implicit Euler, implicit trapezoidal, and implicit midpoint methods. The top row presents trajectories generated by the learned dynamics from initial conditions within the given \review{data} set. In contrast, the bottom row shows trajectories from initial conditions outside the \review{data} set.  The purple orbits in (c), (d), (g), and (f) seem to become limiting cycles.}
    \label{fig:LV_1e_1_results}
\end{figure}
Figure~\ref{fig:LV_1e_2_results} shows the learning results using the dataset with $h = 0.01$. Although a finer sampling timestep is used, the learning results using the Implicit Euler method continue to show expansive outcomes.
\begin{figure}
    \centering
% ── Row 1 : initial data **in** the training set ──────────────────────────
    \begin{subfigure}[t]{\textwidth}
        \centering
        \begin{subfigure}[t]{0.24\textwidth}
            \includegraphics[width=\linewidth]{journal_image/LV1/BE/journal_BE_gt.eps}
            \caption{Ground truth}
        \end{subfigure}
        \begin{subfigure}[t]{0.24\textwidth}
            \includegraphics[width=\linewidth]{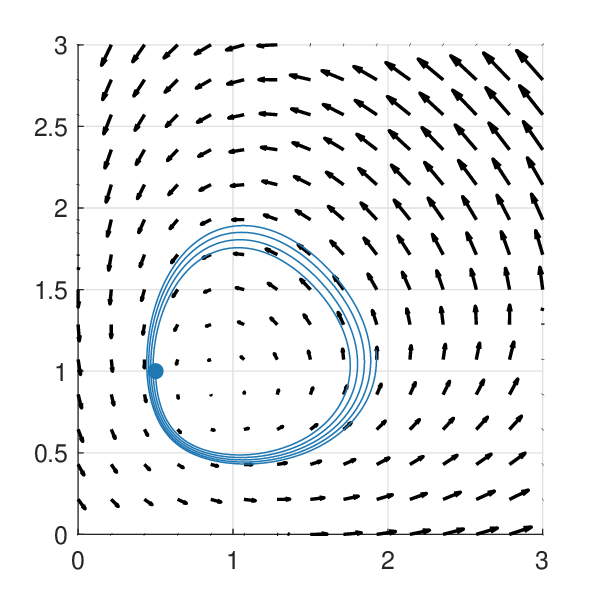}
            \caption{Implicit Euler}
        \end{subfigure}
        \begin{subfigure}[t]{0.24\textwidth}
            \includegraphics[width=\linewidth]{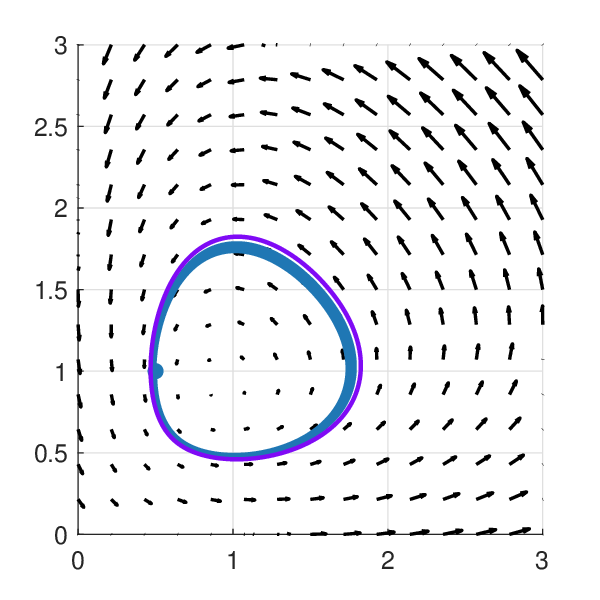}
            \caption{Implicit trapezoidal}
        \end{subfigure}
        \begin{subfigure}[t]{0.24\textwidth}
            \includegraphics[width=\linewidth]{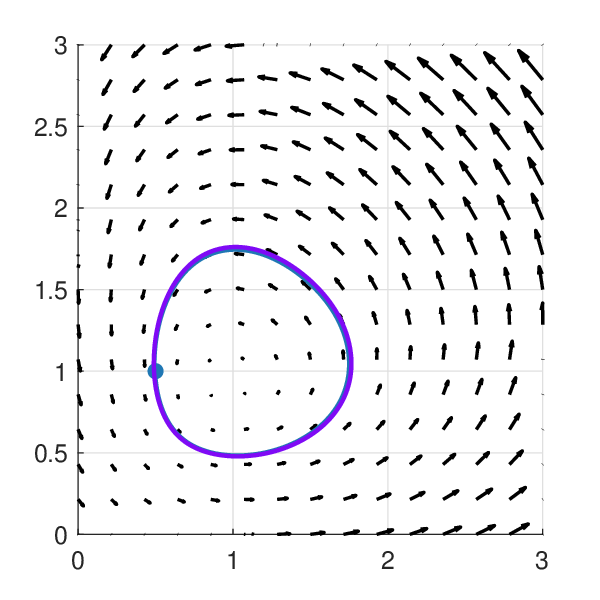}
            \caption{Implicit midpoint}
        \end{subfigure}

    \end{subfigure}

    \vspace{1.5em} % vertical gap between rows
    \begin{subfigure}[t]{\textwidth}
        \centering
        \begin{subfigure}[t]{0.24\textwidth}
            \includegraphics[width=\linewidth]{journal_image/LV1/BE/journal_BE_gt_out.eps}
            \caption{Ground truth}
        \end{subfigure}
        \begin{subfigure}[t]{0.24\textwidth}
            \includegraphics[width=\linewidth]{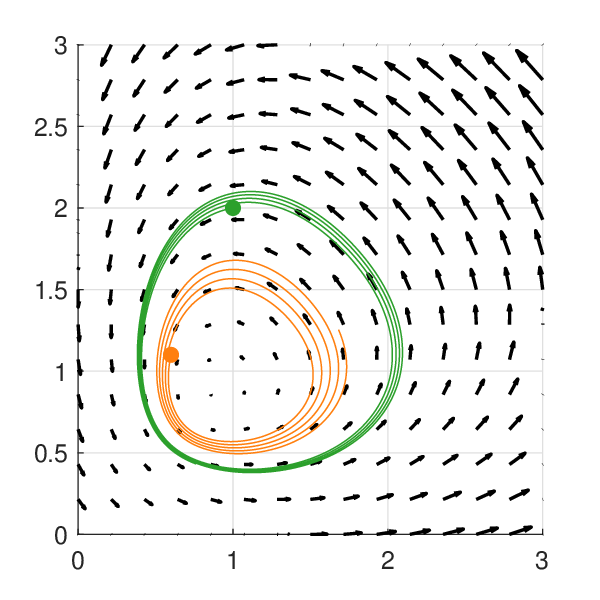}
            \caption{Implicit Euler}
        \end{subfigure}
        \begin{subfigure}[t]{0.24\textwidth}
            \includegraphics[width=\linewidth]{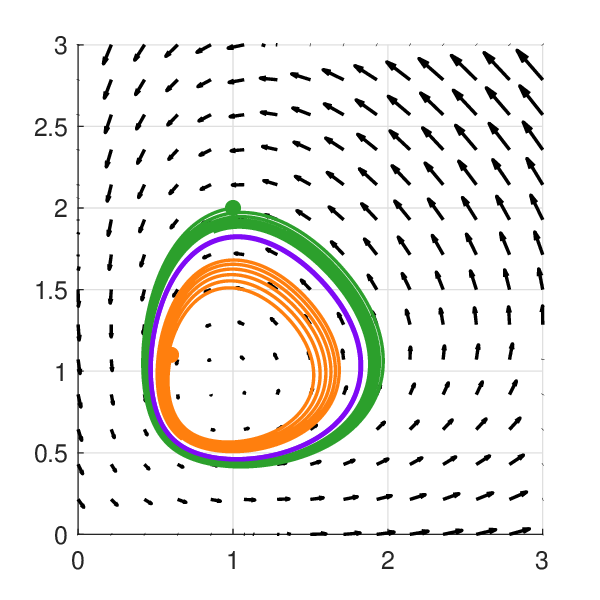}
            \caption{Implicit trapezoidal}
        \end{subfigure}
        \begin{subfigure}[t]{0.24\textwidth}
            \includegraphics[width=\linewidth]{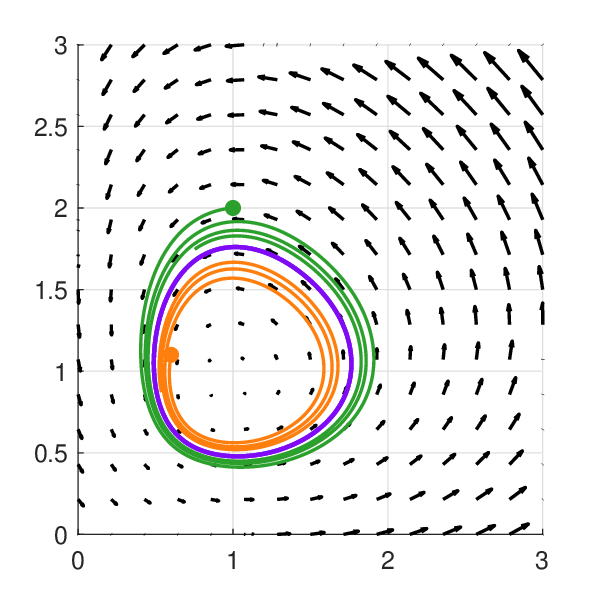}
            \caption{Implicit midpoint}
        \end{subfigure}

    \end{subfigure}

    \caption{Learning results with timestep $h = 0.01$ using the Implicit Euler, implicit trapezoidal, and implicit midpoint methods. The top row presents trajectories generated by the learned dynamics from an initial condition within the \review{given data} set. In contrast, the bottom row shows trajectories from initial conditions outside the \review{given data} set.}
    \label{fig:LV_1e_2_results}
\end{figure}
\subsection{Neural ODE models for a relaxation oscillator}
In this example, we will study the stability properties of learned Neural ODE models constructed with data from a trajectory of the following nonlinear oscillator \review{\cite{Ariel09}} \cite{dahlquist1982numerical}: 
\begin{equation*}
    \left\{
    \begin{array}{rcl}
        \dfrac{dx}{dt} &=& -1-x+8y^3,\\
        \\
        \dfrac{dy}{dt} &=& \dfrac{1}{\nu}(-x+y-y^3).\\
    \end{array}
    \right.
\end{equation*}
For $0<\nu\ll1$, the trajectories of this system will converge to a limiting cycle defined by $x=y-y^3;$ see Figure~\ref{fig:LV_dynamics}. 

We show below that our linear theories can explain why a learned system may not follow the structure (limiting cycle) in the data.

\review{We learned $g_h$, which is parameterized by a neural network}, to approximate the vector field, $f(x,y)$, so that a 
flow line of $g_h(x,y)$ fits the observed trajectory well.
The network architecture consists of three hidden layers, each containing 100 nodes, and uses the \texttt{ELU()} activation function. We terminate the \review{learn}ing when the loss function falls below $10^{-4}$.

We set $\nu = 0.1$ in this study, and generate trajectories by integrating the dynamic system using MATLAB's \texttt{ODE45} solver and setting \texttt{AbsTol} and \texttt{RelTol} to $10^{-12}$. We sample trajectory snapshots at $t_n=nh$ with $h = 10^{-1}, 10^{-2}$, and $0\le t_n\le 5.$
As in other parts of this paper, the subscript $h$ in $g_h$ reflects the step size used during \review{learn}ing by the Explicit Euler method to integrate the vector field $g_h(x,y)$.

 For convenience, we denote the solution of this system by $\gamma(t; \gamma_0) = ( x(t), y(t) )$ with $\gamma_0 := (x(0), y(0) ).$ We use $f(\gamma) \equiv f(x,y)$ to denote the right-hand side of the ODE, and $g_h(x,y)$ to denote a neural network \review{that fits} the given trajectory data.
 Denote $\lambda := \lambda(\gamma)$ and $\hat\lambda := \hat\lambda(\gamma)$ an eigenvalue of the Jacobian, $J_f(\gamma)$, and $J_{g_h}(\gamma)$ at $\gamma$ respectievely.

From this example, our studies show that
\begin{itemize}
    \item As predicted by our theory, the Jacobian eigenvalues of the learned dynamics fall within the stability region of the Explicit Euler method, see Figure~\ref{fig:LV_stab};
    \item Furthermore, due to the presence of non-trivial imaginary part in $\lambda(\gamma),$ and the shape of Explicit Euler's stability region, the learned dynamics could have a larger dissipation; i.e. $\operatorname{Re}(\hat\lambda)< \operatorname{Re}(\lambda)$;  Figure~\ref{fig:LV_eig}; 
    \item Consequently, the trajectories from $g_h$ may not converge to a limiting cycle close to the ground truth. 
    See Figure~\ref{fig:one_traj_vector_field}.
\end{itemize}

\begin{figure}[htbp]
    \centering
    
    \begin{subfigure}[t]{0.3\textwidth}
        \centering
        \includegraphics[width=\linewidth]{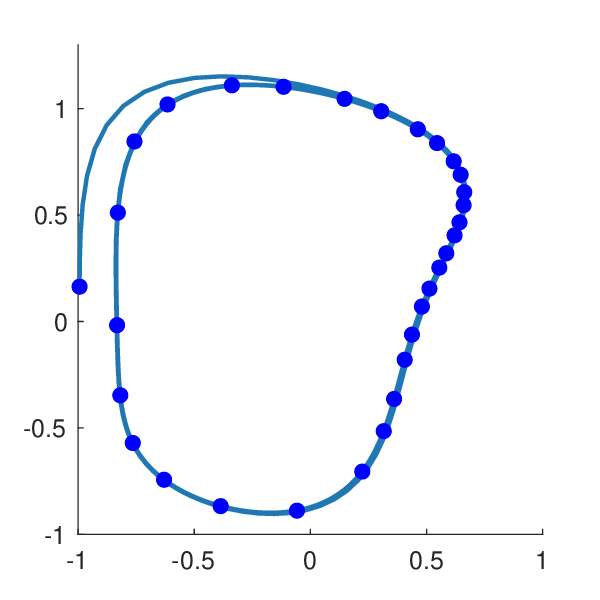}
        \caption{Trajectory with starting point near the limit cycle}
    \end{subfigure}~
    \begin{subfigure}[t]{0.3\textwidth}
        \centering
        \includegraphics[width=\linewidth]{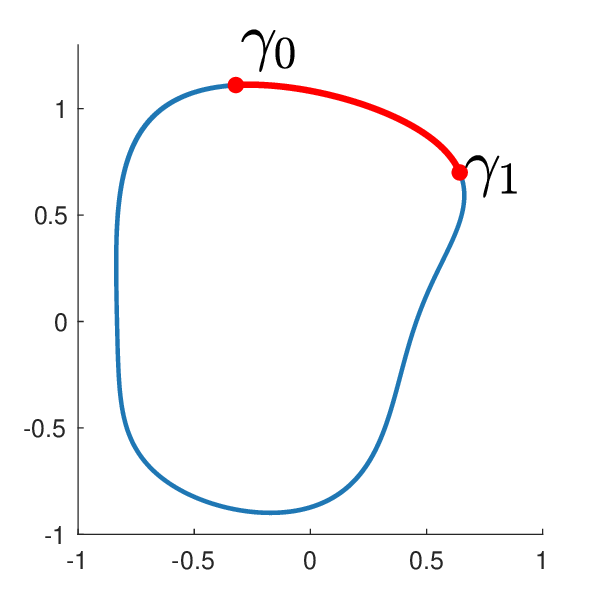}
        \caption{Selected segment of the limit cycle for detailed analysis}
        \label{fig:LV_branch}
    \end{subfigure}

    \caption{The left panel shows the trajectory where data is sampled, which also encompasses a limit cycle. The right panel highlights the segment chosen for detailed analysis.}
    \label{fig:LV_dynamics}
\end{figure}

\begin{figure} %[htbp]
    \centering
    %── left panel ────────────────────────────────────────
    \begin{minipage}[t]{0.42\linewidth}
        \centering
        \textbf{$h = 0.1$}\\[0.4em]   % ← label on top
        \includegraphics[width=\linewidth]{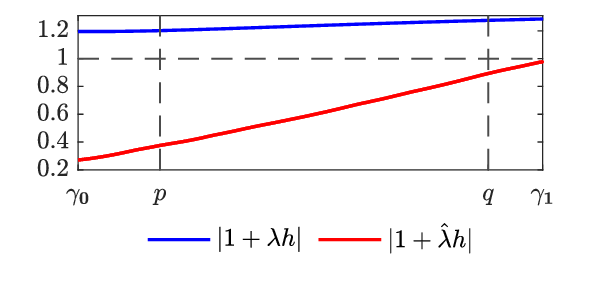}
    \end{minipage}
    \hspace{1cm}
    %── right panel ───────────────────────────────────────
    \begin{minipage}[t]{0.42\linewidth}
        \centering
        \textbf{$h = 0.01$}\\[0.4em]  % ← label on top
        \includegraphics[width=\linewidth]{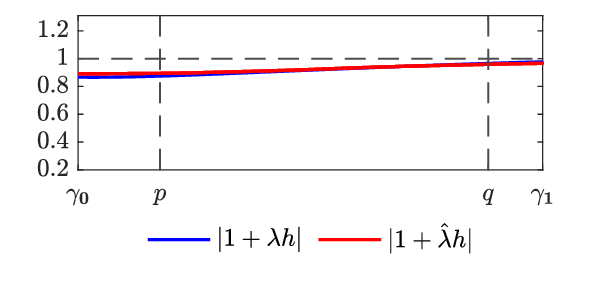}
    \end{minipage}

    \caption{Profiles of $|1+\lambda(x) h|$ and $|1+\hat{\lambda}(x) h|$ along the upper branch of the limit cycle. }
    \label{fig:LV_stab}
\end{figure}

\begin{figure} %[htbp]
    \centering
    
    \begin{subfigure}[t]{0.3\textwidth}
        \centering
        \includegraphics[width=\linewidth]{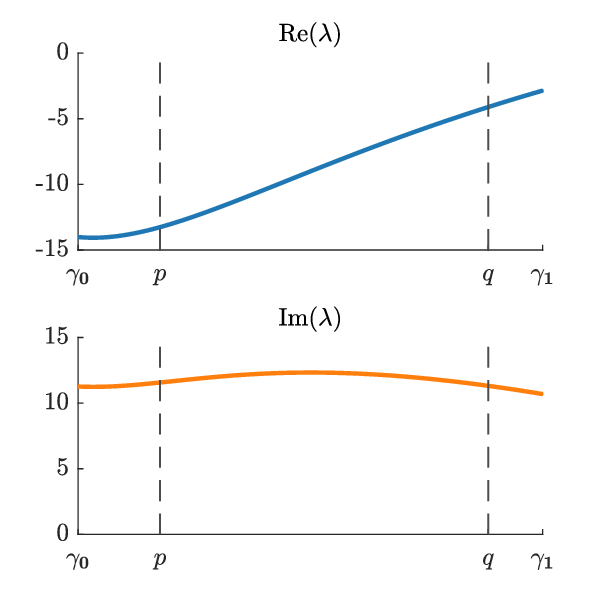}
        \caption{True dynamic}
        \label{fig:true}
    \end{subfigure}
    \begin{subfigure}[t]{0.3\textwidth}
        \centering
        \includegraphics[width=\linewidth]{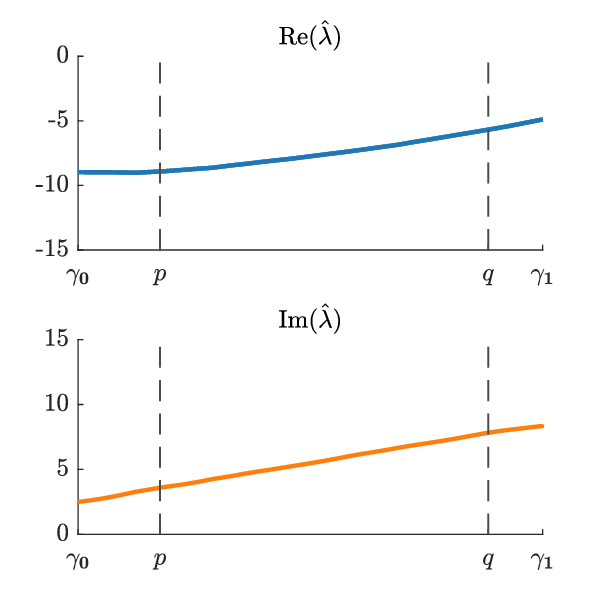}
        \caption{$h = 0.1$}
        \label{fig:eig1e_1}
    \end{subfigure}
    \begin{subfigure}[t]{0.3\textwidth}
        \centering
        \includegraphics[width=\linewidth]{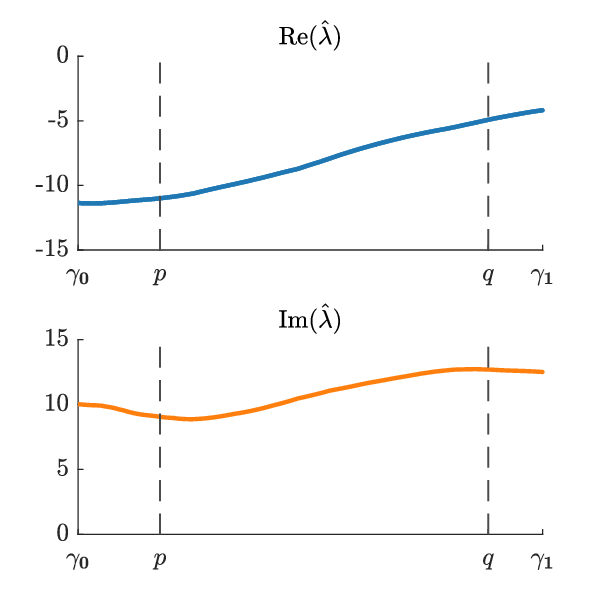} 
        \caption{$h = 0.01$}
        \label{fig:eig1e_2}
    \end{subfigure}
    \caption{Real and imaginary components of the Jacobian eigenvalues for both the learned and true dynamics along the sampled branch; the x-axis ticks match the labels in Figure \ref{fig:LV_dynamics}.}
    \label{fig:LV_eig}
\end{figure}
\begin{figure} %[htbp]
    \centering
    %── left panel ────────────────────────────────────────
    \begin{minipage}[t]{0.3\linewidth}
        \centering
        % \textbf{$h = 0.1$}\\[0.4em]   % ← label on top
        \includegraphics[width=\linewidth]{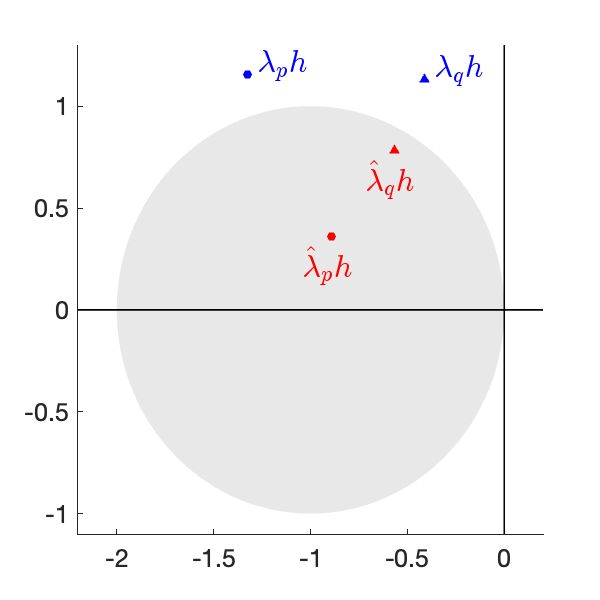}
        \subcaption{$h = 0.1$}
    \end{minipage}
    \hspace{1cm}
    %── right panel ───────────────────────────────────────
    \begin{minipage}[t]{0.3\linewidth}
        \centering
        % ← label on top
        \includegraphics[width=\linewidth]{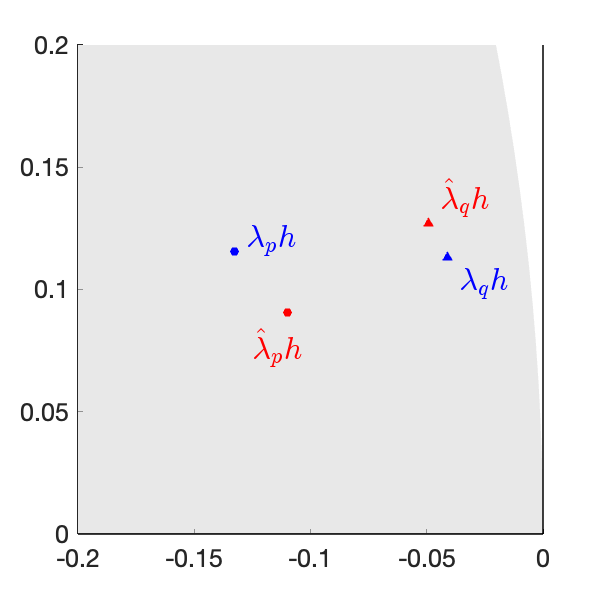}
        \subcaption{$h = 0.01$}
    \end{minipage}

    \caption{Comparison of the eigenvalues $\lambda$ and $\hat\lambda$ of $J_f$ and $J_{g_h}$ at points $p$ and $q$. The eigenvalues are scaled by the step size used by the numerical integrator and plotted on the complex plane, with the gray regions indicating the integrator's stability region.
    }
    \label{fig:LV_FE_stab}
\end{figure}

\begin{figure}%[htbp]
    \centering
    % ---------- first row -------------------------------------------------
 %   \makebox[\textwidth][c]{%
        \begin{subfigure}[t]{0.30\textwidth}
            \centering
            \includegraphics[width=\linewidth]{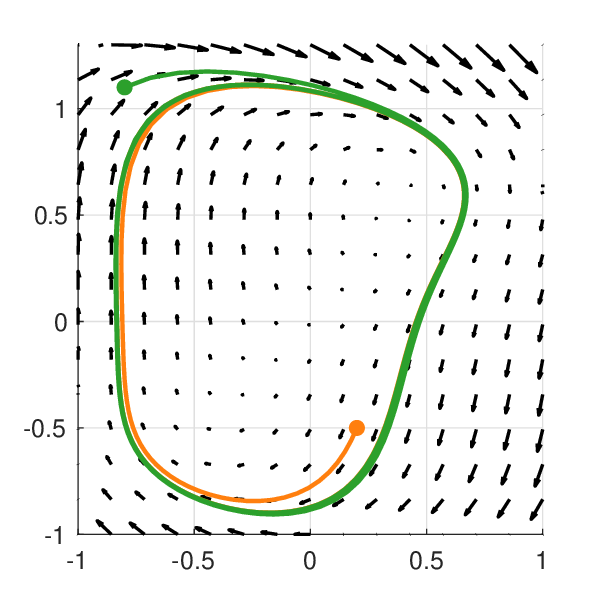}
            \caption{Ground truth}
            \label{fig:true_a}
        \end{subfigure}\hfill
        \begin{subfigure}[t]{0.30\textwidth}
            \centering
            \includegraphics[width=\linewidth]{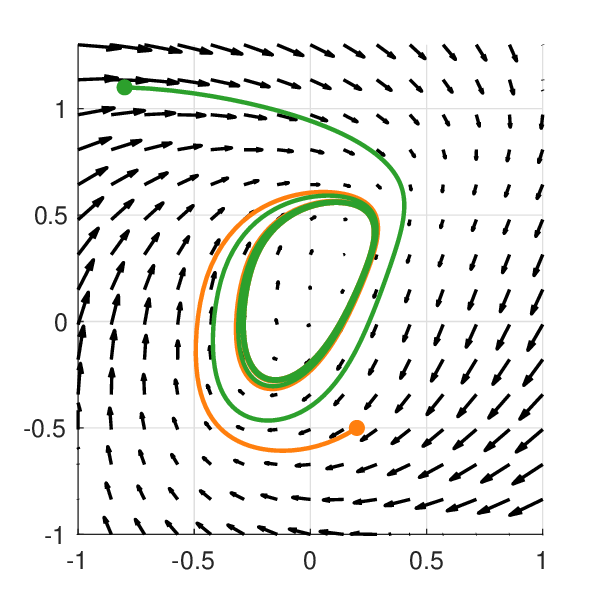}
            \caption{A learned vector field ($h=0.1$)}
            \label{fig:learned_a1}
        \end{subfigure}\hfill
        \begin{subfigure}[t]{0.30\textwidth}
            \centering
            \includegraphics[width=\linewidth]{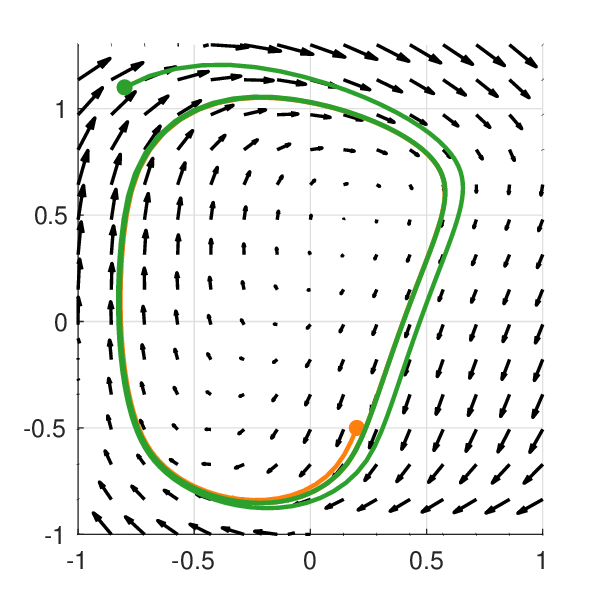}
            \caption{A learned vector field ($h=0.01$)}
            \label{fig:learned_b1}
        \end{subfigure}\hfill        
    \caption{Vector fields using only one trajectory with different sampling timesteps. }
    \label{fig:one_traj_vector_field}
\end{figure}

\subsection{Neural ODE models for a damped nonlinear pendulum}

We return to the study of the nonlinear pendulum mentioned in the Introduction. Even though the sampling rate is significantly higher than the Nyquist rate, we shall observe the striking differences in the \emph{amplitudes and phases of oscillations} in the fully resolved trajectories of the learned neural ODE models.

We use the damped pendulum governed by
\begin{equation*}
    \left\{
    \begin{array}{rcl}
        \dfrac{d\vartheta}{dt} &=& \omega,\\
        \\
        \dfrac{d\omega}{dt} &=& -\gamma \omega -\dfrac{g}{L}\sin \vartheta,\\
    \end{array}
    \right.
\end{equation*}
where we set $\gamma = 0.02$, $\dfrac{g}{L} = 9.81$. We simulate the \review{trajectories} of $\vartheta$ and $\omega$ using MATLAB's \texttt{ODE45} solver and setting \texttt{AbsTol} and \texttt{RelTol} to $10^{-12}$ for $t \in [0, 40].$ Snapshots are taken at $t_n=nh$ with $h = 10^{-1}$, and $0\le t_n\le 10.$

\review{The neural network consists of three hidden layers with 100 neurons each,
equipped with the ELU activation function.}

In Figure~\ref{fig:combined-2x2}, we show the numerical artifacts of four numerical integrators used to learn the damped nonlinear pendulum system. We observe that 
The Implicit Euler method infers a system that supports oscillations of much larger amplitudes; in contrast, the Explicit Euler method infers a system with much larger damping. According to our theory, such discrepancies originate from the differences in the geometries of the stability regions of the integrators. 
We see that the Implicit Midpoint rule yields the most accurate learning, both in terms of amplitude and phase accuracy\revmark{, consistent with its exact preservation of oscillatory modes shown in Section~\ref{sec:nonlinear-learning}}.

\begin{figure}[!htbp]
  \centering
  % Row 1
  \begin{subfigure}[t]{0.48\textwidth}
    \centering
    \includegraphics[width=\linewidth]{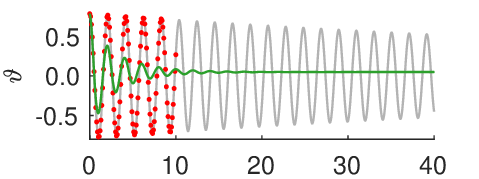}
    \caption{Explicit Euler}
  \end{subfigure}\hfill
  \begin{subfigure}[t]{0.48\textwidth}
    \centering
    \includegraphics[width=\linewidth]{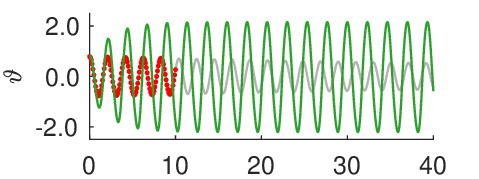}
    \caption{Implicit Euler}
  \end{subfigure}

  \vspace{1em} % small vertical gap

  % Row 2
  \begin{subfigure}[t]{0.48\textwidth}
    \centering
    \includegraphics[width=\linewidth]{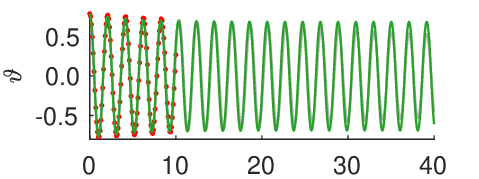}
    \caption{Implicit Midpoint}
  \end{subfigure}\hfill
  \begin{subfigure}[t]{0.48\textwidth}
    \centering
    \includegraphics[width=\linewidth]{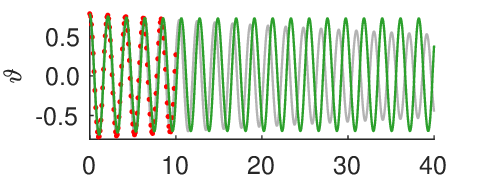}
    \caption{AB2}
  \end{subfigure}

  \caption{Amplitude errors of the learned solution component $\vartheta$ (green curves) inferred from applying four different numerical integrators to fit the data (red dots): (a) Explicit Euler, (b) Implicit Euler, (c) Implicit Midpoint, and (d) AB2. For reference, the true solutions are shown by the gray curves.}
  \label{fig:combined-2x2}
\end{figure}

\section{Conclusion}

This article studies the potential non-negligible numerical artifacts in learning dynamical systems from observed trajectories. 
 The choice of numerical integrator and its step size are critical. 
 Interestingly, the stability of a numerical integrator does not even guarantee the stability of the learned dynamical system. What matters is the geometry of the integrator's stability region on the complex plane.
 The superior stability of the Implicit Euler method actually makes it the worst choice for forward computation in the type of inverse problems we considered in this paper. We observe that in one-step methods, only the implicit midpoint rule accurately preserves both the amplitude and rotation direction, regardless of whether the actual dynamics are conservative or dissipative. 
 We also presented numerical studies demonstrating that our analysis
 can provide a way to analyze the learning problems for more complex nonlinear systems.
 \revmark{Beyond the choice of integrator, the analysis also quantifies how sample size and observational noise propagate into the learned parameter, and identifies a root-selection principle that resolves a non-uniqueness previously reported for linear multistep methods.}
 \review{Overall, our analysis suggests that the implicit midpoint method, whose stability region coincides with the left-hand side of the complex plane, naturally favors the preservation of conservative and dissipative structure from discrete data.}  \review{Therefore, in the absence of prior knowledge about whether the underlying autonomous system is conservative or dissipative, we advocate for the use of the Implicit Midpoint Rule as a principled default choice.}
\section*{Statements and Declarations}

\begin{itemize}
\item \textbf{Funding} Richard Tsai is supported partially by National Science Foundation grants DMS-2208504, DMS-2513857, and 
Army Research Oﬃce Grant W911NF2320240.
Bing-Ze Lu received support from the National Science and Technology Council, Taiwan, through Grant 113-2917-I-564-033 and 114-2115-M-194-007-MY3.
\item \textbf{Conflict of interest/Competing interests (check journal-specific guidelines for which heading to use) } The authors have no competing interests to declare that are relevant to the content of this article.
\item \textbf{Ethics approval and consent to participate} Not applicable
\item \textbf{Consent for publication }Not applicable
\item \textbf{Data availability}  Not applicable
\item \textbf{Materials availability} Not applicable
\item \textbf{Code availability} Not applicable
\item \textbf{Author contribution} Not applicable
\end{itemize}

\section{Appendix}
\newcolumntype{L}{>{\raggedright\arraybackslash}X}
% ---------- Table 1 ----------
\begin{table}[h]
    \caption{Notation: dynamical systems and solutions.}
    \label{tab:notation-dyn}
    \centering
    \renewcommand{\arraystretch}{1.25}
    \begin{tabularx}{\textwidth}{@{} r L @{}}
        \toprule
        \textbf{Symbol} & \textbf{Description} \\
        \midrule
        $f : \mathbb{R}^d \to \mathbb{R}^d$ & True (unknown) vector field of the autonomous system $y' = f(y)$ \\
        $\hat{f} \in \mathcal{F}$          & Candidate vector field drawn from function class $\mathcal{F}$ \\
        $g$                                 & Global minimizer of the discrete learning problem; in general $g \neq f$ \\
        $\varphi_t[f]\,y_0$                 & Flow map of $y' = f(y)$: solution at time $t$ starting from $y_0$ \\
        $y(t;\, y_0)$                       & Solution trajectory, i.e.\ $y(t;\,y_0) \equiv \varphi_t[f]\,y_0$ \\
        $\lambda \in \mathbb{C}$            & Eigenvalue of $f$; governs the scalar model $z' = \lambda z$ \\
        $\hat{\lambda} \in \mathbb{C}$      & Learned eigenvalue, i.e.\ the minimizer of the scalar least-squares problem \\
        \bottomrule
    \end{tabularx}
\end{table}

% ---------- Table 2 ----------
\begin{table}[h]
    \caption{Notation: data and sampling.}
    \label{tab:notation-data}
    \centering
    \renewcommand{\arraystretch}{1.25}
    \begin{tabularx}{\textwidth}{@{} r L @{}}
        \toprule
        \textbf{Symbol} & \textbf{Description} \\
        \midrule
        $H$                                          & Sampling step size (time gap between consecutive observations) \\
        $t_n = nH$                                   & $n$-th observation time \\
        $\mathcal{D}_N = \{(t_n, y(t_n))\}_{n=1}^N$  & Dataset of $N$ trajectory snapshots \\
        $U_M$                                        & Set of $M$ sampled initial conditions \\
        $Z_n = e^{\lambda n h} Z_0$                  & Exact solution of the scalar model at step $n$ \\
        $z_n$                                        & Numerical solution at step $n$ produced by the chosen integrator \\
        \bottomrule
    \end{tabularx}
\end{table}

% ---------- Table 3 ----------
\begin{table}[h]
    \caption{Notation: numerical integrators.}
    \label{tab:notation-integrators}
    \centering
    \renewcommand{\arraystretch}{1.25}
    \begin{tabularx}{\textwidth}{@{} r L @{}}
        \toprule
        \textbf{Symbol} & \textbf{Description} \\
        \midrule
        $h$                                          & Step size used by the numerical integrator \\
        $m$                                          & Number of integrator steps per observation interval ($mh = H$) \\
        $S_h[\hat{f}]$                               & One-step propagator of the integrator applied to $\hat{f}$ with step size $h$ \\
        $S_h^{mn}[\hat{f}]$                          & $mn$-fold composition of $S_h[\hat{f}]$, approximating $\varphi_{mnh}[\hat{f}]$ \\
        $p(\xi)$                                     & Stability function of a one-step method (written $R(\xi)$ in Section~\ref{sec:nonlinear-learning}) \\
        $\xi = \lambda h$                            & Scaled eigenvalue; the fundamental argument of $p(\xi)$ \\
        $\rho(z),\, \kappa(z)$                       & First / second characteristic polynomials of a linear multistep method (LMM) \\
        $\pi_{\lambda h}(z) = \rho(z) - \lambda h\,\kappa(z)$ & Combined characteristic polynomial of an LMM \\
        $\zeta_j$                                    & $j$-th root of $\pi_{\lambda h}(z)$ \\
        \bottomrule
    \end{tabularx}
\end{table}

% ---------- Table 4 ----------
\begin{table}[h]
    \caption{Notation: stability regions and optimization.}
    \label{tab:notation-stability}
    \centering
    \renewcommand{\arraystretch}{1.25}
    \begin{tabularx}{\textwidth}{@{} r L @{}}
        \toprule
        \textbf{Symbol} & \textbf{Description} \\
        \midrule
        $\mathcal{R}_A$          & Region of absolute stability: all $\xi$ for which every root of $\pi_\xi$ lies in the unit disc \\
        $\mathcal{R}_P$          & Region of partial stability: $\xi$ for which roots lie both inside and outside the unit disc \\
        $\mathcal{F}$            & Function class from which the candidate vector field $\hat{f}$ is drawn \\
        $\hat{\xi} = \hat{\lambda}\, h$ & Global minimizer of the scalar least-squares problem \\
        $c_1, \ldots, c_k$       & Coefficients of the general solution of the linear multistep recurrence \\
        \bottomrule
    \end{tabularx}
\end{table}
\subsection{Proofs of Theorems in Section~2}

\subsubsection{Conservative Cases}

Lemma~\ref{thm:main} and Theorem~\ref{thm:euler_stab} indicate that when the underlying dynamics are conservative, the dynamics learned by one-step methods tend to lie on the boundary of the corresponding method’s stability region. The amplitude behavior of the learned dynamics is dissipative for the Explicit Euler and RK2 methods, but expansive for the Implicit Euler method, since the boundaries of their stability regions are contained entirely on either the right-hand or the left-hand side of the complex plane. Therefore, we only need to show that when the step size $h$ is sufficiently small, the dynamics inferred by the RK3 method exhibit expansive behavior. Beyond the amplitude error, we rigorously prove that, once the sampling rate satisfies the Nyquist sampling condition, the learned dynamics correctly preserve the rotation direction. 

\paragraph{Proof of Corollary~\ref{cor:unified_cons} (RK3 Amplitude Error).}
We spot the learned dynamics using the RK3 method that satisfies
\begin{eqnarray}\label{eq:RK3_proof}
    1+\xi+\dfrac{\xi^2}{2}+\dfrac{\xi^3}{6}=e^{\lambda h}, \qquad |\xi|<1.
\end{eqnarray}
If $h>0$ satisfies $6|1-e^{\lambda h}|<1$, then this equation admits a unique solution $\hat{\lambda}h$ within the unit disk.  
Indeed, the curve $|1+\xi+\tfrac{\xi^2}{2}+\tfrac{\xi^3}{6}|=1$ intersects the imaginary axis only at $(0,0)$ and $(0,\pm\sqrt{3})$, indicating that any root with $|\xi|<1$ must have $\operatorname{Re}(\xi)\ge0$. Moreover, if $h$ satisfies $6|1-e^{\lambda h}|<1$, then by Vieta’s relations,
\[
z_1+z_2+z_3=-3,\qquad 
z_1z_2+z_2z_3+z_1z_3=6,\qquad 
z_1z_2z_3=-6(1-e^{\lambda h}),
\]
where $z_1, z_2, z_3$ are the roots of \eqref{eq:RK3_proof}. We find that there must exist one root $|z_1|>1$ from the first relation. By contradiction, if both $|z_2|<1$ and $|z_3|<1$, the stability region would not be contained in $[-3, 1] \times [-3i, 3i]$. Hence, we can find exactly one root inside $|\xi|<1$, which we denote $\hat{\lambda}h$.

\paragraph{Proof of Corollary~\ref{cor:unified_cons} (Rotation Direction).}
Assume the underlying dynamics are conservative or dissipative, and let $h>0$ satisfy the Nyquist condition $\operatorname{Im}(\lambda h)\in(-\pi,\pi)$. 
We show that for each one-step method considered, the learned dynamics $\hat{\lambda}h$ inferred from $p(\hat{\lambda}h)=e^{\lambda h}$ preserve the rotation direction, i.e.,
\[
\operatorname{sign}(\operatorname{Im}(\hat{\lambda}h)) = \operatorname{sign}(\operatorname{Im}(\lambda h)).
\]

\paragraph{Proof of Corollary~\ref{cor:unified_cons} (Explicit Euler).}
From Lemma~\ref{thm:main}, $p(\hat{\lambda} h)=1+\hat{\lambda} h = e^{\lambda h}$, hence
\[
1+\operatorname{Re}(\hat{\lambda} h)=\cos(\operatorname{Im}(\lambda h)),\qquad
\operatorname{Im}(\hat{\lambda} h)=\sin(\operatorname{Im}(\lambda h)),
\]
implying
\[
\operatorname{sign}(\operatorname{Im}(\hat{\lambda} h))=\operatorname{sign}(\operatorname{Im}(\lambda h)).
\]

\paragraph{Proof of Corollary~\ref{cor:unified_cons} (RK2).}
For the second-order Runge--Kutta method, $p(\hat{\lambda} h)=1+\hat{\lambda} h+\tfrac{(\hat{\lambda} h)^2}{2}=e^{\lambda h}$.
Comparing imaginary parts gives
\[
\operatorname{Im}(\hat{\lambda} h)\bigl(1+\operatorname{Re}(\hat{\lambda} h)\bigr)=\sin(\operatorname{Im}(\lambda h)).
\]
Since $\operatorname{Re}(\hat{\lambda} h)>-1$, we obtain
\[
\operatorname{sign}(\operatorname{Im}(\hat{\lambda} h))=\operatorname{sign}(\operatorname{Im}(\lambda h)).
\]

\paragraph{Proof of Corollary~\ref{cor:unified_cons} (RK3).}
For the third-order Runge--Kutta method, 
\[
p(\hat{\lambda} h)=1+\hat{\lambda} h+\tfrac{(\hat{\lambda} h)^2}{2}+\tfrac{(\hat{\lambda} h)^3}{6}=e^{\lambda h}.
\]
Taking imaginary parts yields
\[
\sin(\operatorname{Im}(\lambda h))=\operatorname{Im}(\hat{\lambda} h)\left(1+\operatorname{Re}(\hat{\lambda} h)
+\tfrac{(\operatorname{Re}(\hat{\lambda} h))^2}{2}-\tfrac{(\operatorname{Im}(\hat{\lambda} h))^2}{6}\right).
\]
For $|\hat{\lambda}h|<1$, we have $(\operatorname{Im}(\hat{\lambda}h))^2\in[0,3]$, ensuring positivity of the multiplicative factor, and thus
\[
\operatorname{sign}(\operatorname{Im}(\hat{\lambda} h))=\operatorname{sign}(\operatorname{Im}(\lambda h)).
\]

\paragraph{Proof of Corollary~\ref{cor:unified_cons} (Implicit Euler).}
For the Implicit Euler method, $p(\hat{\lambda} h)=\frac{1}{1-\hat{\lambda} h}=e^{\lambda h}$, giving
\[
\hat{\lambda} h=1-e^{-\lambda h}=1-\cos(\operatorname{Im}(\lambda h))+i\sin(\operatorname{Im}(\lambda h)),
\]
so that
\[
\operatorname{sign}(\operatorname{Im}(\hat{\lambda} h))=\operatorname{sign}(\operatorname{Im}(\lambda h)).
\]

\paragraph{Proof of Corollary~\ref{cor:unified_cons} (Implicit Midpoint Rule).}
For the implicit midpoint rule, 
\[
p(\hat{\lambda} h)=\frac{1+\tfrac{1}{2}\hat{\lambda} h}{1-\tfrac{1}{2}\hat{\lambda} h}=e^{\lambda h},
\]
hence
\[
\hat{\lambda}h=2i\tan\!\left(\frac{\operatorname{Im}(\lambda h)}{2}\right),
\]
and 
\[
\operatorname{sign}(\operatorname{Im}(\hat{\lambda} h))=\operatorname{sign}(\operatorname{Im}(\lambda h)), \quad \text{for } \operatorname{Im}(\lambda h)\in(-\pi,\pi).
\]

\medskip
Therefore, for all considered one-step methods under the Nyquist condition, the learned dynamics $\hat{\lambda}$ preserve the same rotation direction as the true conservative dynamics.

\subsubsection{Dissipative Cases}

\paragraph{Proof of Corollary~\ref{cor:unified_cons} (Explicit Euler).}
From \eqref{eq:one-step-condition}, we have
\[
1 > |e^{\lambda h}| = |1 + \hat{\lambda} h|.
\]
This lies entirely within the Explicit Euler stability region on the left-hand plane, implying $\operatorname{Re}(\hat{\lambda}) < 0$.  
Taking imaginary parts of $e^{\lambda h} = 1 + \hat{\lambda} h$ gives
\[
\sin(\operatorname{Im}(\lambda h)) = \operatorname{Im}(\hat{\lambda} h),
\]
and hence $\operatorname{sign}(\operatorname{Im}(\hat{\lambda})) = \operatorname{sign}(\operatorname{Im}(\lambda))$.

\paragraph{Proof of Corollary~\ref{cor:unified_cons} (RK3).}
We claim that if $|e^{\lambda h}| \le \sqrt{\tfrac{8}{9}}$, then all roots of 
\[
1 + z + \frac{z^2}{2!} + \frac{z^3}{3!} = e^{\lambda h}
\]
have negative real parts.  
Following the tangency and Vieta’s relations arguments in the text, we conclude that if $|e^{\lambda h}|<\tfrac{8}{9}$, all roots satisfy $\operatorname{Re}(z)<0$. Exactly one bounded root $|\xi|<1$ exists, corresponding to $\hat{\lambda}h$.

\paragraph{Proof of Corollary~\ref{cor:unified_cons} (Implicit Euler).}
For Implicit Euler,
\[
e^{\lambda h} = \frac{1}{1 - \hat{\lambda} h}.
\]
Since $1 > |e^{\lambda h}| = \big|\frac{1}{1 - \hat{\lambda} h}\big|$, we have $|1 - \hat{\lambda} h| > 1$.  
The real part of $\hat{\lambda}$, however, cannot be uniquely determined since $\{z: |1-z|=c\}$ crosses both half-planes.

\paragraph{Proof of Corollary~\ref{cor:unified_cons} (Implicit Midpoint Rule).}
From
\[
e^{\lambda h} = \frac{1 + \tfrac{1}{2}\hat{\lambda} h}{1 - \tfrac{1}{2}\hat{\lambda} h},
\]
we obtain
\[
\left|1 + \tfrac{1}{2}\hat{\lambda} h\right| < \left|1 - \tfrac{1}{2}\hat{\lambda} h\right|,
\]
which implies $\operatorname{Re}(\hat{\lambda} h) < 0$.  
The imaginary part analysis follows the same reasoning as in the conservative case.
\bibliographystyle{unsrt}  
\bibliography{reference}
\end{document}